\documentclass{mcom-l}

\usepackage{amsmath,amssymb,ifthen}
\usepackage{fullpage,verbatim}
\usepackage{tikz,pgf}
\usepackage{graphicx,psfrag,subfigure}
\usetikzlibrary{arrows,automata}
\usetikzlibrary{positioning}

\tikzset{
	state/.style={
		rectangle,
		rounded corners,
		draw=black, line width = 1,
		minimum height=2em,
		inner sep=2pt,
		text centered,
	},
}
\tikzset{block/.style={rectangle split, draw, rectangle split parts=2, text centered, rounded corners, minimum height=4em, line width = 1}}
\newtheorem{theorem}{Theorem}[section]
\newtheorem{lemma}[theorem]{Lemma} 

\theoremstyle{definition}

\theoremstyle{remark}
\newtheorem{remark}[theorem]{Remark}

\numberwithin{equation}{section}

\def\E{\mathbb E}

\def\P{{\mathbb P}}
\def\R{{\mathbb R}}
\def\N{{\mathbb N}}
\def\Z{{\mathbb Z}}

\def\id{{\boldsymbol{1}}}
\def\EE{{\mathcal E}}
\def\FF{{\mathcal F}}

\def\MM{{\mathcal M}}

\def\PP{{\mathcal P}}

\def\RR{{\mathcal R}}
\def\SS{{\mathcal S}}
\def\LL{{\mathcal L}}
\def\TT{{\mathcal T}}

\def\XX{{\mathcal X}}

\def\Dom{D}

\def\bx{\boldsymbol{x}}
\def\by{\boldsymbol{y}}
\def\bz{\boldsymbol{z}}

\def\norm#1#2{\|#1\|_{#2}}

\def\set#1#2{\big\{#1\,:\,#2\big\}}

\def\eps{\varepsilon}
%\def\dual#1#2{\langle#1\,,\,#2\rangle}

 % simple-layer potential
 % double-layer potential
 % hypersingular integral operator
 % Dirichlet-to-Neumann map
 % monotonte operator

\def\normL2#1#2{\|#1\|_{L^2(#2)}}

\newcommand{\dual}[3][]{#1\langle#2\,,\,#3#1\rangle}

\newcounter{constantsnumber}
\def\namec#1#2{%
 \ifthenelse{\equal{#1}{rel}}{C_{\rm rel}}{%
  \ifthenelse{\equal{#1}{mesh}}{C_{\rm mesh}}{%
  \ifthenelse{\equal{#1}{sz}}{C_{\rm sz}}{%
  \ifthenelse{\equal{#1}{dislocrel}}{C_{\rm dlr}}{%
  \ifthenelse{\equal{#1}{eff}}{C_{\rm eff}}{%
  \ifthenelse{\equal{#1}{main}}{C_{\rm V}}{%
  \ifthenelse{\equal{#1}{opt}}{C_{\rm opt}}{%
  \ifthenelse{\equal{#1}{normequiv}}{C_{\rm norm}}{%
  \ifthenelse{\equal{#1}{reliable}}{C_{\rm rel}}{%
  \ifthenelse{\equal{#1}{efficient}}{C_{\rm eff}}{%
  \ifthenelse{\equal{#1}{dlr}}{C_{\rm dlr}}{%
  \ifthenelse{\equal{#1}{stable}}{C_{\rm stab}}{%
  \ifthenelse{\equal{#1}{reduction}}{C_{\rm red}}{%
   \ifthenelse{\equal{#1}{unibound}}{C_{\rm hot}}{%
    \ifthenelse{\equal{#1}{hotConst}}{C_{\rm hot}}{%
   \ifthenelse{\equal{#1}{inverseK}}{C_{\rm K}}{%
  \ifthenelse{\equal{#1}{refined}}{C_{\rm ref}}{%
  \ifthenelse{\equal{#1}{estconv}}{C_{\rm est}}{%
  \ifthenelse{\equal{#1}{optimal}}{C_{\rm opt}}{%
  \ifthenelse{\equal{#1}{qo}}{C_{\rm qo}}{%
  \ifthenelse{\equal{#1}{mon}}{C_{\rm mon}}{%
  \ifthenelse{\equal{#1}{cea}}{C_{\mbox{\scriptsize C\'ea}}}{%
  \ifthenelse{\equal{#2}{newcounter}}{\refstepcounter{constantsnumber}\label{const#1}}{}C_{\ref{const#1}}}%
}}}}}}}}}}}}}}}}}}}}}}

\newcounter{contractionnumber}
\def\nameq#1#2{%
  \ifthenelse{\equal{#1}{reduction}}{q_{\rm red}}{%
  \ifthenelse{\equal{#1}{estconv}}{q_{\rm est}}{%
  \ifthenelse{\equal{#1}{cea}}{q_{\mbox{\scriptsize C\'ea}}}{%
  \ifthenelse{\equal{#2}{newcounter}}{\refstepcounter{contractionnumber}\label{contraction#1}}{}q_{\ref{contraction#1}}}%
}}}

\def\namer#1#2{%
  \ifthenelse{\equal{#1}{reduction}}{\rho_{\rm red}}{%
  \ifthenelse{\equal{#1}{estconv}}{\rho_{\rm est}}{%
  \ifthenelse{\equal{#1}{cea}}{\rho_{\mbox{\scriptsize C\'ea}}}{%
  \ifthenelse{\equal{#1}{qo}}{\rho_{\mbox{\scriptsize qo}}}{%
  \ifthenelse{\equal{#2}{newcounter}}{\refstepcounter{contractionnumber}\label{contraction#1}}{}\rho_{\ref{contraction#1}}}%
}}}}

\newtheorem{proposition}[theorem]{Proposition}
\newtheorem{corollary}[theorem]{Corollary}
\newtheorem{algorithm}[theorem]{Algorithm}
\newtheorem{assumption}[theorem]{Assumption}

\def\revision#1{#1}

\def\T{\mathbb T}

\begin{document}

% \title[short text for running head]{full title}
\title{Recurrent Neural Networks\\ as Optimal Mesh Refinement Strategies}

%    Only \author and \address are required; other information is
%    optional.  Remove any unused author tags.

%    author one information
% \author[short version for running head]{name for top of paper}
\author{Michael Feischl}
\address{Institute for Analysis and Scientific Computing
TU Wien, Wiedner Hauptstraße 8-10, 1040 Vienna}
\curraddr{}
\email{michael.feischl@tuwien.ac.at}
\thanks{Both authors are supported by the Deutsche Forschungsgemeinschaft (DFG) 
through CRC 1173.}

%    author two information
\author{Jan Bohn}
\address{Institute for Applied and Numerical Mathematics, KIT, Englerstr. 2,
76131 Karlsruhe}
\curraddr{}
\email{jan.bohn@kit.edu}
\thanks{}

%    \subjclass is required.
\subjclass[2010]{Primary }

\date{}

\dedicatory{}

%    Abstract is required.
\begin{abstract}
We show that an optimal finite element mesh refinement algorithm for a 
prototypical elliptic PDE can be learned by a recurrent neural network
with a fixed number of trainable parameters independent of the desired accuracy 
and the input size, i.e., number of elements of the mesh. 
Moreover, for a general class of PDEs with solutions which are well-approximated by deep neural networks, we show that an optimal mesh refinement strategy can be learned by recurrent neural networks. This includes problems for which no optimal adaptive strategy is known yet.
\end{abstract}

\maketitle

\section{Introduction}\label{sec:intro}
Adaptive methods for finite element mesh refinement had tremendous impact on the 
scientific community both on the theoretical side as well as on the applied, 
engineering side.

Following the seminal works~\cite{bdd,stevenson07,ckns} on the adaptive finite 
element method, a multitude of papers extended the ideas to numerous model 
problems and applications, see e.g.,~\cite{ks,cn} for conforming 
methods,~\cite{rabus10,BeMao10,bms09,cpr13,mzs10}   
for nonconforming methods,~\cite{LCMHJX,CR2012,HuangXu} for mixed formulations, 
and~\cite{fkmp,gantumur,affkp,ffkmp:part1,ffkmp:part2} for boundary element 
methods 
(the list is not exhausted, see also~\cite{axioms} and 
the references therein). Quite recently,~\cite{stokesopt,nonsymm} also cracked 
non-symmetric and indefinite problems. All those works have in common that they 
use a standard adaptive refinement algorithm
of the form
\begin{align*}
\fbox{Solve}\longrightarrow\fbox{Estimate}\longrightarrow\fbox{Mark}
\longrightarrow\fbox{Refine}
\end{align*}
where an error estimator is computed from the current solution and then used to 
refine certain elements of the mesh. The actual refinement of the individual 
elements of the mesh is usually done with an algorithm called newest-vertex 
bisection (see, e.g.,~\cite{stevenson08}). A general drawback of adaptive mesh 
refinement methods is often their very specific area of application and their implementational overhead involved in the error 
estimation and choosing elements which to refine.

This  encourages the development of black-box tools which can be adapted to a wide range 
of problems. In view of the huge practical success of recurrent neural networks 
(RNNs) in various applications and their flexibility in terms of the length of the input sequence (after all we do not want to retrain the network meshes of different sizes), they might provide exactly the required 
black-box tool. The most prominent examples of RNNs are Long-Term-Short-Term 
memory approaches proposed in~\cite{lstm} and since then hugely successful in 
practical applications, e.g., for time-series interpretation~\cite{timeseries}, 
speech recognition~\cite{speech}, speech synthesis~\cite{speech2}, and even 
surgical robot control~\cite{robot}. Very roughly, a recurrent neural network 
has the following structure
\begin{center}
	\begin{tikzpicture}[->,>=stealth']
	\node[state] (X1) 
	{%
		$X_1$
	};
	\node[state,above of =X1,yshift=1cm] (DNN1) 
	{DNN};
	\node[state,above of =DNN1,yshift=1cm] (Y1) 
	{$Y_1$};
	\node[state,right of = X1, xshift=1cm] (X2) 
	{%
		$X_2$
	};
	\node[state,above of =X2,yshift=1cm] (DNN2) 
	{DNN};
	\node[state,above of =DNN2,yshift=1cm] (Y2) 
	{$Y_2$};
	\node[state,right of = X2, xshift=1cm] (X3) 
	{%
		$X_3$
	};
	\node[state,above of =X3,yshift=1cm] (DNN3) 
	{DNN};
	\node[state,above of =DNN3,yshift=1cm] (Y3) 
	{$Y_3$};
	\node[state,right of=DNN3,xshift=1cm, draw = white ] (DNN4) 
	{\hspace{5mm}$\cdots$\hspace{5mm}\,};
	\node[state,right of=X3,xshift=1cm, draw = white ] (A) 
	{\hspace{5mm}$\cdots$\hspace{5mm}\,};
	\node[state,right of=Y3,xshift=1cm, draw = white ] (B) 
	{\hspace{5mm}$\cdots$\hspace{5mm}\,};
	\node[state,right of =DNN4,xshift=1cm] (DNNn) 
	{DNN}; 
	\node[state, below of = DNNn, yshift =-1cm] (Xn) 
	{%
		$X_n$
	};	
	\node[state,above of =DNNn,yshift=1cm] (Yn) 
	{$Y_n$};
	\path (X1) edge  node[anchor=south,above]{} (DNN1)
	(DNN1) edge node[anchor=south,above]{} (Y1)
	 (X2) edge  node[anchor=south,above]{} (DNN2)
	(DNN2) edge node[anchor=south,above]{} (Y2)
	 (X3) edge  node[anchor=south,above]{} (DNN3)
	 (DNN3) edge node[anchor=south,above]{} (Y3)
	(DNN1) edge node[anchor=south,above]{} (DNN2)
	(DNN2) edge node[anchor=south,above]{} (DNN3)
	(DNN3) edge node[anchor=south,above]{} (DNN4)
	(DNN4) edge node[anchor=south,above]{} (DNNn)
	(Xn) edge  node[anchor=south,above]{} (DNNn)
	(DNNn) edge node[anchor=south,above]{} (Yn);
	\end{tikzpicture}
\end{center}
where the $X_1,X_2,\ldots,X_n$ denote a (vector valued) input sequence and the 
$Y_1,Y_2,\ldots,Y_n$ a (vector valued) output sequence. The block DNN denotes a 
standard deep neural network which maps the input state to the output state, but 
may also use hidden intermediate states from the previous iteration of the 
network. The major advantage of this structure compared to a fully connected DNN 
over all $n$ input states is that the weights of the DNNs are shared for all 
iterations. This means that an arbitrary long input sequence can be treated with 
a DNN depending only on a bounded number of trainable parameters. We will use 
this fact in order to construct a network whose parameter count does not depend 
on the number of elements of the current adaptive mesh.

The idea and question motivating this work is the following: Can we replace the 
steps $\fbox{Estimate}\longrightarrow\fbox{Mark}$ by a recurrent neural network 
$\fbox{ADAPTIVE}$ in order to achieve similar (or better) results than state of 
the art adaptive mesh refinement algorithms?

We answer this question in two ways: The first main result in Section~\ref{sec:main1} shows that an RNN ADAPTIVE
can be trained to achieve at least the performance of adaptive algorithms which are known to be optimal for
second order elliptic PDEs. The second main result in Section~\ref{sec:main2} shows for a broad class of problems that as long as
the exact solution of a PDE can theoretically be efficiently approximated by a RNN, the RNN ADAPTIVE
can be trained to produce optimally refined meshes. Roughly speaking, the present work shows that black-box mesh refinement by use of RNNs is at least as good as current optimal mesh refinement technology and can even achieve optimal results in areas which are not yet covered by the theory of adaptive mesh refinement. 

% First, while asymptotic optimal error estimators are known for some problems, 
% their preasymptotic performance is largely unknown. It is the hope of the authors 
% that automated machine learning could come up with an estimate--refine scheme 
% which drastically improves the preasymptotic behavior of known error estimators.
% 
% Second, for many problems there are no optimal adaptive algorithms known, or 
% sometimes there are not even error estimators available. By training a neural 
% net built from
% blue prints laid out in this work, one can obtain optimal mesh 
% refinement algorithms for those hard problems. Examples for those problems are, 
% e.g., time-dependent equations like the Landau-Lifshitz-Gilbert equations or the 
% Navier-Stokes equations. We show that this is actually possible for a quite general class of problems.

% \medskip
% 
% The present work shows that a mildly complex RNN can indeed emulate an optimal 
% adaptive mesh refinement strategy. The major difficulty is that the number of 
% trainable parameters of the RNN should not depend on the input size, i.e., the 
% number of mesh elements of the current iteration of the adaptive algorithm. This 
% ensures that, once trained, a RNN emulating the adaptive algorithm can be used 
% for a variety of problem sizes and input parameters. 

\medskip

The remainder of the work is structured as follows: Section~\ref{sec:main} 
introduces the model problem, provides definitions of RNNs and optimal adaptive 
algorithms, and states the main results.
Section~\ref{sec:conclude} discusses the applicability of the main results as well as the implementation of the training process.
Section~\ref{sec:construction} provides all the sub assemblies for the RNN which 
emulates the adaptive algorithm. Sections~\ref{sec:complete} and~\ref{sec:if} contain the proofs of the main results. A final 
Section~\ref{sec:numerics} underlines the theoretical findings by some numerical 
experiments.

\section{Model Problem \& Main Results}\label{sec:main}
On the open Lipschitz domain $\Dom\subset \R^d$, $d=2,3$, we consider a 
prototypical PDE of the form
\begin{align}\label{eq:weak}
\begin{split}
\LL u &= f\quad\text{in 
}\Dom,\\
u&=0\quad\text{on }\partial\Dom,
\end{split}
\end{align}
where $\LL\colon \XX \to \XX^\star$ is an isomorphism for some Hilbert space $\XX$.
With discrete spaces $\XX(\TT)\subset\XX$ based on some triangulation $\TT$ of $\Dom$, this allows us to write down the discrete 
form of the equation: Find $U_\TT\in\XX(\TT)$ such that
\begin{align}\label{eq:discrete}
\dual{\LL U_\TT}{V}=f(V)\quad\text{for all }V\in \XX(\TT).
\end{align}
We assume that also $\LL|_{\XX(\TT)}$ is an isomorphism to obtain a unique discrete solution.
\subsection{Optimal mesh refinement}
We consider an initial regular and shape regular triangulation $\TT_0$ of $\Dom$ into compact simplices $T\in\TT_0$.
Such that $\TT$ partitions $\Dom$ into compact simplices
such that the intersection of two elements $T\neq T'\in\TT$ is either: a common 
face, a common node, or empty.
In the recent literature~\cite{axioms,stevenson07,ckns}, mesh refinement algorithms are steered by an error estimator $\rho(\TT) = \rho(\TT,U_\TT,f) = \sqrt{\sum_{T\in\TT} \rho_T^2}$ which satisfies $\rho(\TT,U_\TT,f)\approx \norm{u-U_\TT}{\XX}$ and have the following basic structure:
\begin{algorithm}\label{alg:adaptive}
	\textbf{Input: } Initial mesh $\TT_0$, parameter $0<\theta<1$, tolerance $\eps>0$.\\
	For $\ell=0,1,2,\ldots$ do:
	\begin{enumerate}
		\item Compute $U_{\ell}:=U_{\TT_\ell}$ from~\eqref{eq:discrete}.
		\item Compute error estimate $\rho_T$ for all $T\in\TT_\ell$. If $\sum_{T\in \TT_\ell} \rho_T^2\leq \eps^2$, stop.
		\item Find a set $\MM_\ell\subseteq \TT_\ell$ of minimal 
cardinality such that
		\begin{align}\label{eq:doerfler}
		\sum_{T\in\MM_\ell}\rho_T^2\geq \theta 
\sum_{T\in\TT_\ell}\rho_T^2.
		\end{align}
		\item Use newest-vertex-bisection to refine at least the 
elements in $\MM_\ell$ and to obtain a new mesh $\TT_{\ell+1}$.
	\end{enumerate}
	\textbf{Output: } Sequence of adaptively refined meshes $\TT_\ell$ and 
corresponding approximations $U_\ell\in{\XX}(\TT_{\ell})$ such that $\rho(\TT_\eps)\leq \eps$ for final step $\TT_\eps := \TT_L$ and $L\in\N$.
\end{algorithm}

We consider the following notion of optimality of the mesh refinement algorithm: 
Let $\T$ denote the set of all possible meshes which can be generated by 
iterated application of newest-vertex-bisection to the initial mesh $\TT_0$. 
Then, the maximal possible convergence rate $s>0$ is defined by the maximal 
$s>0$ such that
\begin{subequations}\label{eq:opt}
\begin{align}
\sup_{N\in\N} \inf_{\TT\in\T\atop \#\TT-\#\TT_0\leq N} \rho(\TT,U_\TT,f)N^s 
<\infty.
\end{align}
We call Algorithm~\ref{alg:adaptive} optimal if it satisfies
\begin{align}
\sup_{0<\eps\leq 1}\#\TT_\eps \rho(\TT_\eps)^{1/s} <\infty
\end{align}
\end{subequations}
 for the same rate $s$.
 
 \bigskip

 The main goal of this work is to prove that a particular type of neural network can be trained to perform
the steps (2) and (3) of Algorithm~\ref{alg:adaptive} in an optimal way without any further knowledge about $\LL$. We show that
this is possible for second order elliptic operators $\LL$ in Section~\ref{sec:main1} and for a much broader class of problems
in Section~\ref{sec:main2}.
 
 %The main theorem of interest for the present work is the 
% following. The following Theorem is the main result in many works on optimal adapive mesh refinement and it holds for numerous model problems (see, e.g., ~\cite{nonsymm,axioms,ckns, stevenson07} for linear second order elliptic operators or~\cite{fembemopt,stokesopt} for transmission problems and the Stokes problem).
% \begin{theorem}
% {In the situation of Section \ref{sec:mainresult1} with constant coefficients $A,b,c$}, Algorithm~\ref{alg:adaptive} applied to~\eqref{eq:weak}--\eqref{eq:discrete} with error estimator~\eqref{eq:errest}
% is optimal in the sense of~\eqref{eq:opt} provided that $\theta$ is sufficiently 
% small.
% \end{theorem}

\subsection{Definition of Deep Neural Networks}
We consider standard ReLU networks $B$ which can be defined as follows: For a 
given input $x\in \R^{s_0}$ and weight matrices $W_j\in\R^{s_{j+1}\times s_j}$,
$j=0,\ldots,d$,
we define the output $y\in \R^{s_{d+1}}$ as
\begin{align*}
y:=B(x):=W_{d} \phi(W_{d-1}\phi(W_{d-2}(\cdots \phi(W_0 x)\cdots ))),
\end{align*}
where the activation function is defined as $\phi(y):=\max(y,0)$ and is applied 
entry wise to vector valued inputs. 
A DNN is said to have depth $d$ and width $\max_{j=0,\ldots,d+1} s_j$. The number of weights is given by $\sum_{j=0}^{d}s_{j+1}s_j$. We do not 
specify biases explicitely as we can always assume an additional constant input
state $x_0$. Clearly, compositions of $+$, $-$, $\min$, $\max$, and $|\cdot|$ can be constructed as DNNs. Moreover, given two DNNs $B_1, B_2$, their composition is also a DNN.
This is implicitly used in the following. We define the complexity of the DNN by the number of weights.

\subsection{Definition of Basic Recurrent Neural Networks}
 A RNN $B$ is a deep neural network $B$ with output size $s'\in\N$ 
and input size $s+s'$, $s\in\N$. For reasons that become clear below, we denote this as a basic RNN.
 The DNN $B$  is applied to each entry of a (vector-valued) sequence 
$\bx=(x_1,x_2,\ldots,x_n)\in\R^{s\times n}$
 and returns another (vector-valued) sequence 
$\by=(y_1,y_2,\ldots,y_n)\in\R^{s'\times n}$, $s,s'\in\N$. Additionally, the 
previous output state $y_{i-1}$ is
 fed into $B$ as an inpute state, i.e.,
 \begin{align*}
y_i:=B(x_i,y_{i-1}):=W_{d} \phi(W_{d-1}\phi(W_{d-2}(\cdots \phi(W_0 
\binom{x_i}{y_{i-1}})\cdots ))), \quad i=1,\ldots,n.
\end{align*}
The weight matrices $W_j$ and hence the complexity of $B$ is independent of 
$n\in\N$. The number of weights of a basic RNN is just the number of entries in the weight matrices of the underlying DNN, width and depth are defined analogously. Hence, the complexity of a RNN is defined as the complexity of the underlying DNN. We also use the expression size synonymous to complexity. 
For $i=1$, we allways assume that $y_0=0\in \R^{s'}$.
For example, a simple summation over the seqeuence $\bx\in\R^{{ 1\times n}}$ can be 
realized by 
\begin{align*}
 y_i=B(x_i,y_{i-1}):= x_i+y_{i-1}=\begin{pmatrix} 1 & 
-1\end{pmatrix}\phi\Big(\begin{pmatrix} 1 & 1\\-1 & 
-1\end{pmatrix}\binom{x_i}{y_{i-1}}\Big).
\end{align*}
The last entry of $\by\in\R^{{1\times  n}}$ contains the sum 
$y_n=\sum_{i=1}^nx_i$.

\subsection{Fixed number of independent weights} {As done in the previous subsection,}
by applying the same DNN $B$ to different parts of an input vector 
$x\in\R^{n_0}$, we may construct DNNs with arbitrary width but a fixed number of 
independent weights, i.e.,
the weights of $B$. By stacking the networks on top of each other, i.e., $B\circ 
B\circ \ldots \circ B(x)$ in case of $s_{d+1}=s_0$, we may create DNNs with 
arbitrary depth but still a fixed number of independent weights. \\
The distinction between number of independent weights and number of
total weights is made since in the constructions below, {the size of some networks grows} logarithmically in the accuracy, however,
they are just iterations of the same basic building block and hence the number of independent weights stays constant.
This might benefit the training process, as the search space remains of constant size. On the other hand, the topology of the search space changes as the number of total weights grows. Therefore, further research is required on whether  bounded number of independent weights can be used to speed up the training.
\subsection{Definition of Deep Recurrent Neural Networks}\label{sec:rnn}
We adopt a more general definition of RNNs in this work. We allow ourselves to 
deal with finite concatenations of those basic building blocks
from the previous section,
 i.e., in our notion a RNN is a finite stack of $m$ basic RNNs $B_i$ in the 
sense
\begin{align*}
B_m\circ B_{m-1} \circ \ldots \circ B_2 \circ B_1(\bx),
\end{align*}
i.e., the output sequence of $B_1$ is fed into $B_2$ and so on.
Additionally, we allow that the input $\bx$ is initialized by the last entry of 
the output sequence of the previous network. 
So, in its most general form, the combination $\by'=B_2\circ B_1(\bx)$ of two basic RNNs 
$B_1,B_2$ can be written as
\begin{align*}
\bx\in\R^{s_1\times n}\mapsto \by=B_1(\bx)\in\R^{s_1'\times n}\quad \text{and} 
\quad \bx'=\begin{pmatrix}
y_1 & y_2 & \cdots & y_n\\
y_n & 0 &\cdots & 0 
\end{pmatrix}\in \R^{2s_1'\times n}\mapsto \by':=B_2(\bx')\in\R^{s_2'\times n}.
\end{align*}
We may write vector valued sequences $\bx\in\R^{s\times n}$ as vectors of sequences, i.e., $(\bx_1,\ldots,\bx_s)$.
This choice of neural network class might seem arbitrary, however, it gives us much more freedom when 
constructing the networks and does not sacrifice 
the simplicity of the function class. This means that the complexity {(defined as the sum over the complexities of the underlying basic RNN's)} of a 
stacked RNN is still independent of the sequence length $n$.
Similar constructions of deep RNNs (stacked RNNs) are considered in~\cite{dRNN1,dRNN2,dRNN3}.

\subsection{Elementary operations with deep RNNs}
We illustrate some constructions which will be used implicitly in the proofs below:
\begin{itemize}
\item Identity-DNN: The identity function ${\rm id}(x)=\max(x,0)-\max(-x,0)$ can be emulated by a DNN of depth $d\geq 1$ with the following weight matrices ($1$ stands for the identity matrix of the correct size): $W_0:=\begin{pmatrix}1\\ -1\end{pmatrix}$, $W_i:=\begin{pmatrix}1&0\\0&1\end{pmatrix}$ for $i=1,\ldots,d-1$, and $W_d:=\begin{pmatrix}1&-1\end{pmatrix}$. Similarly, we can define Identity-RNN's.

\item Matrix multiplication: The multiplication with a matrix $y=Mx$ for $M\in\R^{s'\times s}$ can be constructed as $W_0:=\begin{pmatrix}1\\ -1\end{pmatrix}$, $W_1:=\begin{pmatrix}M&-M\end{pmatrix}$, and $W_2:=\begin{pmatrix}1 &-1\end{pmatrix}$.

\item Applying DNN/RNN's simultaneously: If we want to compute the output of two DNN's $z_1=A(x_1)$, $z_2=B(x_2)$ at once, we can define the DNN $(z_1,z_2)=C(x_1,x_2):=(A(x_1),B(x_2))$ by
\begin{align*}
 W_{C,i}:=\begin{pmatrix} W_{A,i} & 0 \\ 0 & W_{B,i}\end{pmatrix},
\end{align*}
where $0$ denotes the zero matrix of appropriate size.
In case the DNN's $A$ and $B$ have different depths, we use identity DNN's to extend $A$ and $B$ to equal depth (note that the resulting depth satisfies $d_c\leq \max(d_A,d_B)+2$ since an identity DNN has at least two layers). Similarly we can apply RNN's simultaneously as long as the input lengths coincide. If we apply  $m\in\N$ networks $B_1,\ldots, B_m$ simultaneously, the resulting network $C$ satisfies
$d_C\leq \max_{i=1,\ldots,m} d_{B_i} +3$ and the width of $C$ is bounded by the sum of the widths of the $B_i$.

\item Any given deep RNN $\by=B(\bx)$ can be extended such that it copies an additional input variable to the output, i.e., there exists $\widehat B$ with comparable complexity to $B$ such that $(\by,\bx_2) = \widehat B(\bx_1,\bx_2)$. In case $B$ is a basic RNN, this can be achieved by, e.g., defining $\widehat B(x_{1,i},y_{i-1},x_{2,i}):=(B(x_{1,i},y_{i-1}),{\rm id}(x_{2,i}))$. If $B$ is a deep RNN, the same construction can be applied to all the basic RNN's that compose $B$.

\item A basic RNN $B$ which turns a given sequence $\bx=(x,0,\ldots,0)\in\R^n$ into the constant sequence $\by=(x,x,\ldots,x)\in\R^n$ can be defined by $y_i=B(x_i,y_{i-1})=x_i+y_{i-1}$. Similarly one can add storage sequence for specific values inside an RNN.

\item Composition of RNN's and DNN's: 
\begin{itemize}
\item
 (basic RNN)$\circ$(DNN): The composition of a basic RNN with a DNN is again a RNN, by directly composing the underlying DNN of the RNN and the DNN.

\item (DNN)$\circ$(basic-RNN): For a DNN $y=B_2(x)$ and a basic RNN $y_i = B_1(x_i,y_{i-1})$, we may construct the composition by $(z_i,y_i) = (B_2\circ B_1(x_i,y_{i-1}), B_1(x_i,y_{i-1}))$ (note that the second entry in the output sequence is necessary for the correct evaluation of $B_1$).
\end{itemize}

\item RNNs as DNNs:
A RNN $B$ can be interpreted as a DNN $B'$. This 
means that we fix the input size $n$ of $B$ and consider the resulting neural 
network $B'$ which has $n$-times the width and depth of $B$ with a total number of weights of $n^3$ times the number of weights of $B$, as can be seen from: 
\begin{center}
	\begin{tikzpicture}[->,>=stealth',scale=0.7,every node/.style={scale=0.7}]
	\node[state] (X1) 
	{%
		$X_1$
	};
	\node[state,above of =X1,yshift=1cm] (DNN1) 
	{DNN};
	\node[state,above of =DNN1,yshift=1cm] (Y1) 
	{$Y_1$};
	\node[state,right of = X1, xshift=1cm] (X2) 
	{%
		$X_2$
	};
	
	\node[state,right of =Y1,xshift=1cm] (DNN2) 
	{DNN};
	\node[state,above of  =DNN2,yshift=1cm] (Y2) 
	{$Y_2$};
	\node[state,right of = X2, xshift=1cm] (X3) 
	{%
		$X_3$
	};
	\node[state,right of = Y2,xshift=1cm] (DNN3) 
	{DNN};
	\node[state,above of =DNN3,yshift=1cm] (Y3) 
	{$Y_3$};
	\node[state,right of= Y3,xshift=1cm, draw = white ] (DNN4) 
	{\hspace{5mm}$\cdots$\hspace{5mm}\,};
	\node[state,right of=X3,xshift=1cm, draw = white ] (A) 
	{\hspace{5mm}$\cdots$\hspace{5mm}\,};
	\node[state,right of=Y3,xshift=1cm, draw = white ] (B) 
	{\hspace{5mm}$\cdots$\hspace{5mm}\,};
	\node[state,right of =DNN4,xshift=1cm] (DNNn) 
	{DNN}; 
	\node[state, right of = A, xshift =1cm] (Xn) 
	{%
		$X_n$
	};	
	\node[state,above of =DNNn,yshift=1cm] (Yn) 
	{$Y_n$};
	\path (X1) edge  node[anchor=south,above]{} (DNN1)
	(DNN1) edge node[anchor=south,above]{} (Y1)
	 (X2) edge  node[anchor=south,above]{} (DNN2)
	(DNN2) edge node[anchor=south,above]{} (Y2)
	 (X3) edge  node[anchor=south,above]{} (DNN3)
	 (DNN3) edge node[anchor=south,above]{} (Y3)
	(DNN1) edge node[anchor=south,above]{} (DNN2)
	(DNN2) edge node[anchor=south,above]{} (DNN3)
	(DNN3) edge node[anchor=south,above]{} (DNN4)
	(DNN4) edge node[anchor=south,above]{} (DNNn)
	(Xn) edge  node[anchor=south,above]{} (DNNn)
	(DNNn) edge node[anchor=south,above]{} (Yn);
	\end{tikzpicture}
\end{center}
However, the number of independent weights is determined only by the number of weights in $B$ and hence independent of $n$.
\item RNN's with input $\bx=(x,0,\ldots,0)\in\R^n$ and output $\by=(y_1,\ldots,y_n)\in\R^n$ that are interpreted as DNN's can be written as DNN's with one dimensional input $x\in\R$ and output $y_n\in\R$, by multiplication with the matrices $(1,0,\ldots,0)$ and $(0,\ldots,0,1)^T$.
\end{itemize}

% To simplify notation, we will often write $B(\bx)=B(\bx^1,\bx^2,\ldots,\bx^s)$ 
% when dealing with vector valued input sequences $\bx\in\R^{s\times n}$.
% Moreover, we will not explicitely write down the weight matrices $W_j$ whenever 
% their construction is clear from the formulas (i.e., concatenations of addition, 
% substraction, 
% and $\phi$).

\subsection{Main Result 1} \label{sec:main1}
On the open Lipschitz domain $\Dom\subset \R^d$, $d=2,3$, we consider a 
prototypical operator $\LL$ of the form
\begin{align}\label{eq:weak0}
\begin{split}
\LL u = -{\rm div}(A\nabla u) + b\cdot \nabla u + cu
\end{split}
\end{align}
where $\LL$ has coefficients $A,b,c\in L^\infty(\Dom)$ such that the 
associated bilinear form
\begin{align*}
a(u,v):=\dual{\LL u}{v}\quad\text{for all }u,v\in H^1_0(\Dom)
\end{align*}
satisfies $a(u,v)\leq C \norm{u}{H^1(\Dom)}\norm{v}{H^1(\Dom)}$ as well as 
$a(u,u)\geq C^{-1}\norm{u}{H^1(\Dom)}^2$ for some constant $C>0$. The 
Lax-Milgram lemma guarantees a unique solution $u\in H^1(\Dom)$ 
of~\eqref{eq:weak} and~\eqref{eq:discrete}.
 On a triangulation $\TT$, we define the Ansatz and 
test spaces
\begin{align*}
 \PP^p(\TT)&:=\set{v\in L^2(\Dom)}{v|_T\text{ is a polynomial of degree }\leq 
p,\,T\in\TT}\\
\XX(\TT):=\SS^p(\TT)&:=\PP^p(\TT)\cap H^1_0(\TT)
\end{align*}
for a polynomial degree $p\in\N_0$. We set $r(p,d):={\rm dim}(\PP^p)$, the dimension of the space of polynomials of degree $p$ in $d$ dimensions.
The residual based error estimator for the given problem reads
\begin{subequations}\label{eq:errest}
\begin{align}
\rho_T^2:=\rho_T(\TT,U_\TT,f)^2:={\rm diam}(T)^2\norm{f-\LL U_\TT}{L^2(T)}^2 + 
{\rm diam}(T)\norm{[n\cdot A\nabla U_\TT]}{L^2(\partial T\cap \Dom)}^2
\end{align}
on each element $T\in\TT$ with normal vector $n$ on the boundary $\partial T$ 
and $[\cdot]$ denoting the jump over element faces, and the overall estimator is
the sum of the elementwise contributions, i.e.,
\begin{align}
\rho(\TT):=\rho(\TT,U_\TT,f):=\sqrt{\sum_{T\in\TT}\rho_T(\TT,U_\TT,f)^2}.
\end{align}
\end{subequations}
To avoid having to deal with data oscillations, we restrict ourselves to the simple case of $A|_T,b|_T,c|_T, f|_T\in \PP^p(T)$ for all $T\in\TT_0$.
Obviously, the error estimator $\rho_T$ depends on the values of $U_\TT$ on the 
whole patch $\omega_T:=\set{T'\in\TT}{T'\text{ shares a face with } T}$.
% The error estimator is reliable and efficient in the sense
% \begin{align*}
% C_{\rm rel}^{-1}\norm{u-U_\TT}{H^1(\Dom)}\leq \rho(\TT,U_\TT,f)\leq C_{\rm 
% eff}\norm{u-U_\TT}{H^1(\Dom)},
% \end{align*}
% for constants $C_{\rm rel},C_{\rm eff}>0$ which depend only on the shape 
% regularity of $\TT$ and on $p$.
 The main goal of the first part of this work is to show that RNNs of almost constant size are 
 capable of performing optimal mesh refinement for the PDE given 
 in~\eqref{eq:weak}.
To that end, we construct a RNN which performs steps~(2)--(3) of 
Algorithm~\ref{alg:adaptive}.

\begin{assumption}\label{roundoff}
We assume all numbers $x,y\in\R$ occuring in computations of the following algorithms satisfy the following: If $x\neq y$, there holds $|x-y|\geq 2^{-n_{\rm min}} \max\{|x|,|y|\}$ for some universal exponent $n_{\rm min}\in\N$.
This assumption allows us to emulate step functions with neural networks which are continuous by construction. The assumption is satisfied in floating point number systems such as \texttt{double}-arithmetic, where $n_{\rm min}$ corresponds to the accuracy in terms of the number of digits, for \texttt{double}-arithmetic, it is $n_{\rm min}=52$.
 \end{assumption}

\begin{theorem}\label{thm:refinement}
	For given $\eps>0$, there exists a deep RNN {\rm 
ADAPTIVE} 
 which takes a vector-valued input sequence 
$\bx\in\R^{({2(d+1)d +(d+3)r(p,d)})\times  \#\TT}$ such that $x_i$ contains the nodes 
of the elements $T'\in\omega_{T_i}$ for $T_i\in\TT$ and the corresponding 
polynomial expansions of $U_{T'}$ and {$f|_{T_i}$}.
%Assuming a shuffled input 
%	sequence $\bx'=(x_{\pi(1)},\ldots,x_{\pi(\#\TT)})$ for some randomly 
%chosen permutation $\pi$,
	The output $\by:={\rm ADAPTIVE}(\bx)\in \R^{\#\TT}$ satisfies
	\begin{align}\label{eq:approxdoerfler}
	\sum_{T_i\in \TT\atop y_{i}>0} \widetilde\rho_{T_i}^2\geq \theta 
\sum_{T\in\TT}\widetilde\rho_T^2
	\end{align}
	%with probability $1-C\exp(-\alpha)$ 
for estimators
	$\widetilde \rho_T$ which satisfy
	\begin{align*}
	 |\rho_T^2-\widetilde\rho_T^2|\leq C_{\rm ada}\frac{\eps}{\#\TT}\quad\text{for all 
}T\in\TT.
	\end{align*}
	with a uniform constant $C_{\rm ada}>0$. 
Moreover, the number of positive entries in $\by$ is minimal
	in order to satisfy~\eqref{eq:approxdoerfler}. 
	The RNN has a fixed number of independent weights. 
	The RNN can be constructed with a total number of weights of  $O( (n_{\rm min}+\log(\#\TT)+|\log(\eps)|+|\log(\norm{\bx}{\infty})|)^4)$  (see Figure~\ref{fig:RNNADAPTIVE} for the precise structure).  The 
magnitude of the weights is  $\mathcal{O}(1)$. Additionally, for a given overall tolerance $\eps_{{\rm tol}}^2>0$, it holds $\by\le 0,$ as soon as the tolerance $\tilde\rho(\TT,U,f)^2:=\sum_{T_i\in \TT } \widetilde\rho_{T_i}^2\le \eps_{{\rm tol}}^2$ is reached.  
\end{theorem}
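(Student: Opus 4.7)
The strategy is to assemble ADAPTIVE as a finite stack of basic RNN building blocks, each realising one of the three conceptual steps: elementwise estimator evaluation, \Doerfler\ selection with minimal cardinality, and the tolerance check. The construction invokes the elementary RNN operations listed above together with the polynomial-arithmetic, comparison, and sorting sub-assemblies to be built in Section~\ref{sec:construction}. Every building block will carry a fixed number of independent weights, and the total weight count is controlled by counting the compositions required to reach the target accuracy. Throughout I write $L:=n_{\rm min}+\log\#\TT+|\log\eps|+|\log\norm{\bx}{\infty}|$.

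\medskip

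\emph{Elementwise estimator.} For each index $i$ the input $x_i$ encodes the vertices of $\omega_{T_i}$ and the polynomial coefficients of $U_\TT|_{T'}$ for $T'\in\omega_{T_i}$ and of $f|_{T_i}$. Since $\rho_{T_i}^2$ is a polynomial in the coefficients and a rational function in the vertex coordinates (the diameter of $T_i$, face measures, and Gram-type integrals of products of polynomials all admit closed forms), it can be evaluated by a fixed-size DNN $B_{\rm est}$ built from additions, multiplications, and approximate divisions. Using the approximate arithmetic networks from Section~\ref{sec:construction}, $B_{\rm est}$ outputs a value $\widetilde\rho_{T_i}^2$ satisfying $|\rho_{T_i}^2-\widetilde\rho_{T_i}^2|\le C_{\rm ada}\eps/\#\TT$ at the cost of depth and width $\mathcal{O}(L^2)$. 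Applying $B_{\rm est}$ entrywise along the sequence yields a basic RNN with a constant number of independent weights.

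\medskip

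\emph{\Doerfler\ selection with minimal cardinality.} This is the main obstacle. I first aggregate $\widetilde\rho(\TT)^2=\sum_i\widetilde\rho_{T_i}^2$ with the summation-RNN from the introduction and broadcast the total to every position of the sequence via a constant-size copy-RNN; on the same sweep I compare $\widetilde\rho(\TT)^2$ with $\eps_{\rm tol}^2$ and carry a flag that will force the final output to be $\le 0$ once the tolerance is reached. To realise the minimal \Doerfler\ set I need to sort the $\widetilde\rho_{T_i}^2$ in decreasing order and greedily accumulate. Because ReLU exactly represents $\min$ and $\max$, a bitonic (or odd-even) sorting network can be emulated by iterating a constant-size compare-exchange RNN $\mathcal{O}(\log^2\#\TT)$ times over the sequence, carrying the original indices $i$ as side channels. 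A prefix-sum RNN then marks the shortest initial segment whose cumulative sum is $\ge\theta\widetilde\rho(\TT)^2$, and a final inverse-permutation sweep writes the marks back to the original positions. Assumption~\ref{roundoff} is used in a crucial way: it provides a uniform gap $2^{-n_{\rm min}}$ between distinct numerical values, so the discontinuous comparison and threshold operations are emulated by Lipschitz ReLU networks with depth $\mathcal{O}(n_{\rm min})$, yielding the strict form~\eqref{eq:approxdoerfler} without destroying the minimal-cardinality property.

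\medskip

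\emph{Assembly and weight count.} Stacking $B_{\rm est}$, the sum/broadcast/tolerance block, the sorting block, the prefix-marking block, and the unsorting block yields ADAPTIVE as a deep RNN in the sense of Section~\ref{sec:rnn}. Each underlying basic RNN has a fixed number of independent weights, so the total number of independent weights is constant. The total weight count is the product of depth and width of the stacked network: the arithmetic blocks contribute $\mathcal{O}(L^2)$, the sorting block contributes $\mathcal{O}(\log^2\#\TT)$ compare-exchange passes each of depth $\mathcal{O}(L)$, and the remaining blocks are dominated; a routine accounting gives the claimed $\mathcal{O}(L^4)$ total weight count with weight magnitudes of order $\mathcal{O}(1)$. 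The pointwise error bound on $\widetilde\rho_T^2$ is guaranteed by the tolerance chosen inside $B_{\rm est}$, the \Doerfler\ inequality~\eqref{eq:approxdoerfler} together with the minimality of the marked set follows from the sort-plus-prefix construction, and the early-stop property is ensured by the tolerance flag, completing the proof.
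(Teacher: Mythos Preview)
Your estimator block and overall assembly are essentially the paper's approach (the paper calls these {\rm ESTIMATOR} and then assembles as in Figure~\ref{fig:RNNADAPTIVE}); one cosmetic difference is that the paper avoids any division by passing to the equivalent estimator~\eqref{eq:equiverr}, where the factor $|T|^{-1}$ cancels against the Jacobian of the reference-element map, so only {\rm MULTIPLY} is needed.

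The \Doerfler\ step, however, contains a real gap. A bitonic or odd--even \emph{sorting network} cannot be realised within the deep-RNN model of Section~\ref{sec:rnn} at the claimed cost. A basic RNN satisfies $y_i=B(x_i,y_{i-1})$, so in a single sweep it can only couple $x_i$ with its immediate predecessor (plus whatever fixed-dimensional state you carry forward), and the only inter-layer coupling allowed is broadcasting the \emph{last} entry $y_n$ of the previous layer. Bitonic sort, on the other hand, needs compare--exchange between pairs at distances $1,2,4,\ldots,\#\TT/2$; encoding a distance-$k$ exchange forces either a hidden state of width $k$ or $k$ stacked shift-by-one layers. Summed over all stages this is at least linear in $\#\TT$, which destroys both the $\mathcal O(L^4)$ total-weight bound and the ``fixed number of independent weights'' claim (the wiring itself would depend on $\#\TT$). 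The same objection applies to the ``inverse-permutation sweep'': scattering marks back to arbitrary original positions is a permutation of the sequence, which is again a non-local operation not expressible with constant-size basic RNNs.

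The paper sidesteps sorting entirely. It performs a \emph{binary search for the cut-off value}: one basic RNN ${\rm SUMY}$ computes $\sum_{x_i\ge y}x_i$ for a given pivot $y$ in a single left-to-right sweep (using {\rm IF}); iterating this $k\simeq \log_2(\max_i x_i)+|\log_2(\eps/\#\TT)|$ times halves the pivot interval and yields $y_k$ with $|y_k-y|\le 2^{-k}\max_i x_i$ (Theorem~\ref{thm:binary}). Then {\rm ROUND} collapses the short interval $[y_k-2z_k,y_k+2z_k]$ to a single value, producing a perturbed sequence $\widetilde x_i$ for which the exact cut-off $\overline y$ is known; a final constant-size sweep marks all $\widetilde x_i>\overline y$ and then walks through the elements with $\widetilde x_i=\overline y$, adding them one by one until the \Doerfler\ sum is met (Theorem~\ref{thm:mark}). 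This is what makes the minimality claim go through with only $\mathcal O(k\,n_{\rm min}^3)$ weights and a fixed number of independent weights.
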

We refer to Section~\ref{sec:complete} for the proof of the Theorem. This result 
suggests the following algorithm:
\begin{algorithm}\label{alg:RNN}
	\textbf{Input: } Initial mesh $\TT_0$, tolerance $\eps_{\rm tol}>0$. \\
	For $\ell=0,1,2,\ldots$ do:
	\begin{enumerate}
		\item Compute $U_{\ell}$ from~\eqref{eq:discrete}.
		\item Apply $\by={\rm ADAPTIVE}(\bx)$ as defined in 
Theorem~\ref{thm:refinement}.
		\item Use newest-vertex-bisection with mesh closure to refine the elements 
$T_i\in\TT_\ell$ with $y_i> 0$ to obtain a new mesh $\TT_{\ell+1}$ or stop if $\by\leq0$.
		 %\item  Stop if $\sum_{T_i\in \TT} \widetilde\rho_{T_i}^2 \le 2\eps^2_{\rm tol}/3$.%$\by\leq 0$.
	\end{enumerate}
	\textbf{Output: } Sequence of adaptively refined meshes $\TT_\ell$ and 
corresponding approximations $U_\ell\in\SS^p(\TT_{\ell})$ such that ${\tilde\rho}(\TT_{\eps_{\rm tol}})\leq\eps_{\rm tol}$
for final step $\TT_{\eps_{\rm tol}}:= \TT_L$.
\end{algorithm}

From the previous theorem, we derive the following consequence.
\begin{corollary}\label{cor:refinement}
Given $\eps>0$ in Theorem~\ref{thm:mark}, Algorithm~\ref{alg:RNN} is optimal in the sense
 \begin{align*}
 \sup_{ \sqrt{4C_{\rm ada}\eps/\theta}\leq \eps_{\rm tol}\leq 1} \#\TT_{\eps_{\rm tol}} {\eps_{\rm tol}}^{1/s} \leq C<\infty
 \end{align*}
 with the maximal rate $s>0$ from~\eqref{eq:opt} and $C>0$ independent of $\eps$ and $\eps_{\rm tol}$.
\end{corollary}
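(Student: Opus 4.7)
The plan is to derive the corollary from Theorem~\ref{thm:refinement} by combining the asserted perturbed D\"orfler property with the standard optimality theory for adaptive finite elements (as in~\cite{axioms,stevenson07,ckns}). Throughout, I abbreviate $\eta_T := \rho_T^2$ and $\widetilde\eta_T := \widetilde\rho_T^2$; the hypothesis $\eps_{\rm tol}^2 \geq 4C_{\rm ada}\eps/\theta$ yields $C_{\rm ada}\eps \leq \theta\eps_{\rm tol}^2/4$ and will be used repeatedly.

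First, I translate the RNN marking into a genuine D\"orfler marking with respect to $\rho$. The per-element bound $|\eta_T - \widetilde\eta_T| \leq C_{\rm ada}\eps/\#\TT_\ell$ summed over any subset $S\subseteq\TT_\ell$ gives $|\sum_{T\in S}(\eta_T - \widetilde\eta_T)| \leq C_{\rm ada}\eps$. As long as the algorithm has not terminated, $\widetilde\rho(\TT_\ell)^2 > \eps_{\rm tol}^2$, and hence $\rho(\TT_\ell)^2 \geq (1-\theta/4)\eps_{\rm tol}^2$. Combining these with~\eqref{eq:approxdoerfler} yields
\begin{align*}
\sum_{T\in\MM_\ell}\eta_T
&\geq \theta\sum_{T\in\TT_\ell}\widetilde\eta_T - C_{\rm ada}\eps \\
&\geq \theta\sum_{T\in\TT_\ell}\eta_T - (1+\theta)C_{\rm ada}\eps \\
&\geq \widetilde\theta\sum_{T\in\TT_\ell}\eta_T,
\end{align*}
with $\widetilde\theta := \theta(3-\theta)/4>0$, where in the last step I used $C_{\rm ada}\eps\le \theta\rho(\TT_\ell)^2/4$.

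Second, I bound $\#\MM_\ell$ by the minimal cardinality required for genuine D\"orfler marking with a slightly larger parameter. Let $\MM_\ell^\star\subseteq\TT_\ell$ be of minimal cardinality satisfying $\sum_{T\in\MM_\ell^\star}\eta_T \geq \theta^\star\sum_T\eta_T$, where $\theta^\star := \theta(5-\theta)/(4-2\theta)>\theta$. A calculation analogous to the first step, now in the opposite direction, shows that $\MM_\ell^\star$ also satisfies the approximate bulk condition~\eqref{eq:approxdoerfler}; the minimality statement in Theorem~\ref{thm:refinement} then implies $\#\MM_\ell \leq \#\MM_\ell^\star$. Assuming $\theta$ is small enough that both $\widetilde\theta$ and $\theta^\star$ lie below the standard D\"orfler optimality threshold for the operator in~\eqref{eq:weak0}, the marking axiom required by the framework of~\cite{axioms} is verified.

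Third, since the residual estimator~\eqref{eq:errest} satisfies stability on non-refined elements, estimator reduction on refined elements, discrete reliability, and quasi-orthogonality for~\eqref{eq:weak0} (see, e.g.,~\cite{axioms,ckns,nonsymm}), the abstract optimality theorem delivers
$$\sup_{\ell}\#\TT_\ell\,\rho(\TT_\ell)^{1/s} \leq C,$$
with the same maximal rate $s$ as in~\eqref{eq:opt}. Finally, at the step $L-1$ just before termination, $\widetilde\rho(\TT_{L-1})^2 > \eps_{\rm tol}^2$ and therefore $\rho(\TT_{L-1})^2 \geq (1-\theta/4)\eps_{\rm tol}^2$; the optimality bound gives $\#\TT_{L-1} \leq C\eps_{\rm tol}^{-1/s}$. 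Newest-vertex-bisection with mesh closure enlarges the cardinality at most by a uniform constant factor, so $\#\TT_{\eps_{\rm tol}} = \#\TT_L \leq C'\eps_{\rm tol}^{-1/s}$, which is the asserted bound.

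The main obstacle is the two-sided perturbation analysis in the first two steps: one has to extract a genuine D\"orfler marking of $\rho$ from the noisy one and simultaneously bound its cardinality by that of an $\rho$-minimal set, and both tasks crucially use the explicit link $\eps_{\rm tol}^2 \geq 4C_{\rm ada}\eps/\theta$ between the per-step accuracy of $\widetilde\rho$ and the current estimator level.
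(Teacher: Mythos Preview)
Your argument is correct and follows essentially the same route as the paper: establish a two-sided equivalence between D\"orfler marking for the perturbed indicators $\widetilde\rho_T^2$ and genuine D\"orfler marking for $\rho_T^2$ with modified parameters, then invoke the optimality framework of~\cite{axioms}. The paper phrases the endgame as ``optimality of $\widetilde\rho$'' via a direct appeal to~\cite[Theorem~8.4]{axioms}, whereas you route through optimality of $\rho$ and an explicit penultimate-step argument; one minor caveat is that your last line invokes a \emph{per-step} bound on newest-vertex-bisection closure, which is not the standard statement (the Binev--Dahmen--DeVore/Stevenson estimate is cumulative), but the desired bound $\#\TT_L\lesssim\eps_{\rm tol}^{-1/s}$ follows anyway from linear convergence of $\rho_\ell$ combined with the cumulative closure estimate, both of which are already part of the~\cite{axioms} machinery you cite.
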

\begin{proof}
We may assume $C_{\rm ada}\eps\leq \theta\eps_{\rm tol}^2/4$.
Moreover, for $\widetilde \rho_\ell<\eps_{\rm tol}$, we may redefine $\widetilde \rho_\ell:= \rho_\ell$.
There holds for $\widetilde \rho_\ell\geq \eps_{\rm tol}$ that
\begin{align*}
 |\rho_\ell^2-\widetilde \rho_\ell^2|\leq C_{\rm ada}\eps \leq \theta \eps_{\rm tol}^2/4\leq 
 \begin{cases}
 \theta \widetilde \rho_\ell^2/4,\\
 2\theta \widetilde \rho_\ell^2/4- \theta C_{\rm ada} \eps \leq \theta\rho_\ell^2/2.
 \end{cases}
\end{align*}
This implies $\widetilde \rho_\ell^2 \geq (1-\theta/2)\rho_\ell^2$ as well as $\rho_\ell^2 \geq (1-\theta/4)\widetilde \rho_\ell^2$.
Assume that $\widetilde \rho_\ell$ satisfies $\sum_{T\in\MM} \widetilde \rho_T^2\geq \theta \widetilde \rho_\ell^2$. Then, the above shows immediately
\begin{align*}
	\sum_{T\in\MM} \rho_{T}^2\geq \sum_{T\in \MM} \widetilde \rho_{T}^2 - \theta \widetilde \rho_\ell^2/4\geq 3\theta/4 \widetilde \rho_\ell^2 \geq 3\theta/8 \rho_\ell^2.
	\end{align*}	
On the other hand, if $\rho_\ell$ satisfies $\sum_{T\in\MM}  \rho_T^2\geq \theta \rho_\ell^2$, there holds
\begin{align*}
	\sum_{T\in\MM} \widetilde \rho_{T}^2\geq \sum_{T\in \MM} \rho_{T}^2 - \theta/2  \rho_\ell^2\geq \theta/2 \rho_\ell^2 \geq 3\theta/8 \widetilde\rho_\ell^2.
	\end{align*}
This equivalence of marking for the two error estimators $\rho$ and $\widetilde \rho$ together with the global equivalence $\widetilde \rho_\ell\simeq \rho_\ell$ allows us to apply~\cite[Theorem~8.4]{axioms} directly to prove optimality of $\widetilde \rho$. This concludes the proof.
\end{proof}
\subsection{Main Result 2}\label{sec:main2}
While the results of Section~\ref{sec:main1} are restricted to second order elliptic PDEs, the following statements deal with a much broader class of problems by making some assumptions on the exact solution.

Let $S:=\bigcup_{k=0}^{d-1}S_k\subset D$ denote a singularity set such that $S_k$ is a finite union of compact $k$-dimensional facets (points for $k=0$, edges for $k=1$, etc). Define the weight
\begin{align*}
 w(x):= \min_{k=0,\ldots,d-1}{\rm dist}(x,S_k)^{\max\{0,d-k-\delta_{\rm reg}\}}
\end{align*}
for some $\delta_{\rm reg}>0$. This induces the weighted space $L^{\infty}_w(D)$ with the norm
\begin{align*}
\norm{v}{ L^{\infty}_w(D)}:=\norm{wv}{L^\infty(D)}.
\end{align*}
Let $u\colon D\to \R$ denote the exact solution of some problem $\LL u = f$ for some operator $\LL\colon H^1_0(D)\to H^{-1}(D)$.
The following result does not depend on the numerical method used  to compute $U_\ell$ and hence we just assume that $\nabla \XX(\TT)\supseteq \PP^0(\TT)$ for all $\TT\in\T$ and that we compute some function $U_\TT\in\XX(\TT)$ by means of some numerical method, i.e., FEM, DG-FEM, \ldots.
Consider the following slight modification of Algorithm~\ref{alg:RNN}:
\begin{algorithm}\label{alg:RNN2}
	\textbf{Input: } Initial mesh $\TT_0$, tolerance ${\eps_{\rm tol}}>0$.\\
	For $\ell=0,1,2,\ldots$ do:
	\begin{enumerate}
		\item Compute discrete approximation $U_{\ell}$.
		\item Apply $\by={\rm ADAPTIVE}(\bx)$ as defined in 
Theorem~\ref{thm:if}.
		\item Use newest-vertex-bisection to refine the elements 
$T_i\in\TT_\ell\setminus\TT_{\ell-1}$ (or $T_i\in\TT_0$ for $\ell=0$) with $y_i> 0$ to obtain a new mesh $\TT_{\ell+1}$ or stop if $\by\leq 0$.
	\end{enumerate}
	\textbf{Output: } Sequence of adaptively refined meshes $\TT_\ell$ and 
corresponding approximations $U_\ell\in\SS^1(\TT_{\ell})$ with $\TT_{\eps_{\rm tol}} := \TT_L$ for final step $L\in\N$.
\end{algorithm}

For the following result, we require a slightly different definition of the maximal
rate:
Let $s>0$ be maximal such that
\begin{align}\label{eq:opt2}
\sup_{N\in\N} \inf_{\TT\in\T\atop \#\TT-\#\TT_0\leq N} \max_{T\in\TT}\inf_{v\in \PP^0(\TT)}\norm{\nabla u - v}{L^2(T)}N^{s+1/2} 
<\infty.
\end{align}
We call Algorithm~\ref{alg:RNN2} optimal if it satisfies 
\begin{align}\label{eq:opt22}
\sup_{0<\eps\leq 1}\#\TT_\eps \Big(\max_{T\in\TT_\eps}\inf_{v\in \PP^0(\TT_\eps)}\norm{\nabla u - v}{L^2(T)}\Big)^{1/(s+1/2)} <\infty.
\end{align}
\begin{remark}
In general, the maximal rate in~\eqref{eq:opt2} is lower than in~\eqref{eq:opt}. However, in many practical situations, the two notions will coincide, as they are equivalent as long as there exists a quasi-best approximating triangulation $\TT$ with $\#\TT-\#\TT_0\leq N$ and 
 \begin{align*}
  \max_{T\in\TT}\inf_{v\in \PP^0(\TT)}\norm{\nabla u - v}{L^2(T)}\lesssim \min_{T\in\TT}\inf_{v\in \PP^0(\TT)}\norm{\nabla u - v}{L^2(T)} + N^{-s-1/2}.
 \end{align*}
This, however, is the case in many approximation results particularly those which include weighted spaces or Besov spaces (see, e.g.,~\cite{approxclass}).
\end{remark}

We denote by $\TT(\eps)\in\T$ the mesh with minimal cardinality such that $
\max_{T\in\TT}\inf_{v\in \PP^0(\TT)}\norm{\nabla u - v}{L^2(T)}\leq \eps$.
\begin{theorem}\label{thm:if}
Let $u\in H^1(D)$ and $m\in\N$.
Suppose there exists a deep RNN $v_\eps$ which satisfies $\norm{\nabla u - v_\eps}{L^2(D)}\leq \eps/(Cm)$
as well as $v_\eps^2 \in L^\infty_w(D)$.
Then, there exists a deep RNN \texttt{ADAPTIVE} such that Algorithm~\ref{alg:RNN2} produces outputs $\TT_\eps$ which satisfy
\begin{align*}
 \#\TT_\eps \Big(\max_{T\in\TT_\eps}\inf_{v\in \PP^0(\TT_\eps)}\norm{\nabla u - v}{L^2(T)}\Big)^{1/(s+1/2)}\leq Cm^{1/(s+1/2)}
\end{align*}
with probability larger than $1-L\#\TT(4\eps) 2^{-Cm}$,
where $C>0$ depends on $D$, $\TT_0$ and $\norm{v_\eps}{L^\infty_w(D)}$ with $L$ denoting the maximal level of elements in $\TT(4\eps)$, i.e., the maximal number of bisections necessary to generate each element from $\TT_0$.
The complexity of ${\rm ADAPTIVE}$ is bounded by
$\mathcal{O}(m^2(\#v_\eps +  |\log(\eps)| + |\log(\norm{v_\eps}{L^\infty(\Dom)})|))$, where $\#v_\eps$ denotes the complexity of $v_\eps$ and the number of independent weights is constant.
\end{theorem}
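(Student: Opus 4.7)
The plan is to realise ADAPTIVE as a deep RNN that, on each element $T_i$ of the current mesh, computes a Monte-Carlo estimate of the piecewise-constant best-approximation error of the given surrogate $v_\eps$ on $T_i$, and marks $T_i$ whenever this estimate exceeds a threshold linked to $\eps_{\rm tol}$. Because the rate in~\eqref{eq:opt22} is driven by the worst element, a purely elementwise \emph{maximum strategy} suffices: no global D\"orfler-type comparison is required inside ADAPTIVE, which removes the main combinatorial obstacle that made Section~\ref{sec:main1} delicate.

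\textbf{Step 1 (replacing $\nabla u$ by $v_\eps$).} Since the best piecewise-constant approximation of a vector field on $T$ is its mean, the surrogate local indicator $\widetilde\eta_{T_i}^2 := \norm{v_\eps - \overline{v_\eps}_{T_i}}{L^2(T_i)}^2$ equals $\inf_{v \in \PP^0(T_i)}\norm{v_\eps - v}{L^2(T_i)}^2$. The triangle inequality together with $\norm{\nabla u - v_\eps}{L^2(D)}\le \eps/(Cm)$ yields $|\widetilde\eta_{T_i} - \eta_{T_i}|\le 2\norm{\nabla u - v_\eps}{L^2(T_i)}$, hence a global $\ell^2$-comparison at tolerance $\eps/(Cm)$.

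\textbf{Step 2 (Monte-Carlo integration as a deep RNN).} Using the identity $\widetilde\eta_T^2 = |T|(\overline{|v_\eps|^2}_T - |\overline{v_\eps}_T|^2)$, ADAPTIVE receives $m^2$ pseudo-random sample points per element as auxiliary coordinates of $\bx$, pushes them through a copy of $v_\eps$ via RNN composition (Section~\ref{sec:rnn}), and assembles the empirical variance $\widehat\eta_{T_i}^2$ with the arithmetic building blocks. A comparator analogous to the one built in Theorem~\ref{thm:refinement} returns the sign of $\widehat\eta_{T_i}-\eps_{\rm tol}$. Every substep is RNN-realisable with a constant number of independent weights (the weights of $v_\eps$ are shared across the $m^2$ evaluations), which, after accumulation, yields the claimed total-weight count $\mathcal{O}(m^2(\#v_\eps + |\log\eps| + |\log\norm{v_\eps}{L^\infty(D)}|))$.

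\textbf{Step 3 (concentration and union bound).} The hypothesis $v_\eps^2 \in L^\infty_w(D)$ bounds, for $X$ uniform in $T$, the range and variance of $|v_\eps(X)|^2$ in terms of $\norm{v_\eps^2}{L^\infty_w(D)}$ and the shape of $T$. A Bernstein-type inequality applied to $m^2$ independent samples produces $\P(|\widehat\eta_T^2 - \widetilde\eta_T^2| > \eps_{\rm tol}^2/2)\le 2 e^{-Cm}$. Every element encountered during the run is a descendant of some element of $\TT(4\eps)$ at one of at most $L$ refinement levels, so a union bound over this finite family gives the advertised failure probability $L\,\#\TT(4\eps)\,2^{-Cm}$.

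\textbf{Step 4 (optimality of the maximum strategy).} On the good event, $y_i > 0$ precisely when $\eta_{T_i}\gtrsim\eps_{\rm tol}$, and since Algorithm~\ref{alg:RNN2} only queries the new elements $T_i\in\TT_\ell\setminus\TT_{\ell-1}$, the iteration realises the maximum-marking strategy with threshold $\eps_{\rm tol}$. No element is refined beyond its counterpart in $\TT(4\eps_{\rm tol})$, and standard NVB closure bookkeeping then gives $\#\TT_{\eps_{\rm tol}}-\#\TT_0 \lesssim \#\TT(4\eps_{\rm tol})-\#\TT_0$. Inserting~\eqref{eq:opt2} delivers~\eqref{eq:opt22}.

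\textbf{Main obstacle.} The most delicate point is Step 3 near the singular set $S$: without the weight $w$ the variance of $|v_\eps|^2$ could blow up on elements adjacent to $S$, and the Bernstein tail $2^{-Cm}$ would fail to survive the union bound. The weighted regularity $v_\eps^2 \in L^\infty_w(D)$, together with shape-regularity of NVB-generated meshes and the polynomial-in-level count of elements of $\TT(4\eps)$ at each scale near each $k$-dimensional facet of $S$, is precisely what balances range versus sample size so that the global failure probability stays at $L\,\#\TT(4\eps)\,2^{-Cm}$.
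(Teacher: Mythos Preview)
Your overall architecture is sound and close in spirit to the paper's, but Step~3 contains a real gap that the paper's proof is specifically designed to avoid. A Bernstein-type inequality needs a bound on the range (or at least sub-exponential tails) of the single-sample random variable $|v_\eps(X)|^2$ with $X$ uniform on $T$. The hypothesis $v_\eps^2\in L^\infty_w(D)$ does \emph{not} provide this: for an element $T$ touching a $k$-dimensional component of $S$ one only gets $|v_\eps(x)|^2\le C\,{\rm dist}(x,S_k)^{-(d-k-\delta_{\rm reg})}$, so $|v_\eps(X)|^2$ has merely a polynomial tail (with finite variance only if $\delta_{\rm reg}>d/2$). The constant in any Bernstein bound then depends on $T$ through ${\rm dist}(T,S)$ and blows up as $T$ approaches the singularity; the tail $2^{-Cm}$ is not uniform, and the union bound over $L\,\#\TT(4\eps)$ elements does not close. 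Your ``Main obstacle'' paragraph names the issue but does not resolve it; the claim that shape regularity and polynomial element counts ``balance range versus sample size'' is not substantiated.

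The paper sidesteps this by \emph{not} asking for two-sided concentration of $\widehat\eta_T$ around $\widetilde\eta_T$. Instead it takes $K\simeq m$ \emph{independent} two-point estimators $\rho_{T,k}=|T|^{1/2}|v_\eps(x_k)-v_\eps(y_k)|$ and marks $T$ if \emph{any} sample exceeds~$\eps$. Only two one-sided facts are needed: (i)~an anti-concentration bound $\P(\rho_T\ge q\,\eta(T,v_\eps))\ge\gamma>0$ uniformly in $T$, proved in Lemma~\ref{lem:EX} via a H\"older argument that uses the weight $w$ exactly to control $\|w^{-1}\mathbb{1}_{\Omega_\geq}\|_{L^{p/(p-1)}}$; and (ii)~$\E\rho_T\lesssim\eta(T,v_\eps)$, which via Markov bounds the probability of \emph{over}-marking. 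The accuracy direction then comes from $(1-\gamma)^K\simeq 2^{-m}$, and the cardinality direction needs the separate combinatorial tree-counting argument of Lemma~\ref{lem:stochopt} (Catalan numbers over refinement forests), because Markov alone gives only a $1/C$ bound per element. Your Step~4 implicitly assumes that on the good event no over-refinement occurs; without two-sided concentration this does not follow, and you would still need an argument of the Lemma~\ref{lem:stochopt} type.
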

\begin{remark}
 The dependence of the complexity of ${\rm ADAPTIVE}$ on $\log(\norm{v_\eps}{L^\infty(\Dom)})$ is usually not a problem. Any deep RNN $v_\eps$ approximating $\nabla u$ up to accuracy $\eps>0$ can be capped at magnitude $C>0$ by composition, i.e., $\widetilde v_\eps:=\max(\min(v_\eps,C),-C)$. The approximation error satisfies
 \begin{align*}
  \norm{\widetilde v_\eps -\nabla u}{L^2(\Dom)}\leq \eps + \norm{\nabla u}{L^2(D_C)},
 \end{align*}
with $D_C:=\set{x\in D}{|\nabla u|>C}$. If $\nabla u \in L^p(D)$ for some $p>2$, we already have $|D_C|\leq \norm{\nabla u}{L^p(D)}^p C^{-p}$ and hence $\norm{\nabla u}{L^2(D_C)}\leq \norm{\nabla u}{L^p(D_C)}|D_C|^{(p-2)/(2p)}\lesssim C^{1-p/2}$. This shows that $\log(\norm{v_\eps}{L^\infty(\Dom)})\lesssim |\log(\eps)|$ is possible. 
\end{remark}

We postpone the proof of the above theorem to Section~\ref{sec:if}.

\section{Discussion of the main results}\label{sec:conclude}
\subsection{Theoretical results}
Theorem~\ref{thm:refinement}, Corollary~\ref{cor:refinement}, and Theorem~\ref{thm:if} show that an RNN 
can in fact achieve optimal mesh refinement in the sense of~\eqref{eq:opt} and~\eqref{eq:opt2}.
The RNN only needs to follow the fairly general structure of a deep RNN. The width and depth of the 
RNNs depends poly-logarithmically on the number of elements $\#\TT$ as 
well as on the desired accuracy. The number of independent weights (trainable 
parameters) is, however, uniformly bounded
and independent of the accuracy as well as of the number of elements in the 
mesh.

\bigskip

While Corollary~\ref{cor:refinement} shows that the deep RNN approach is at least as good as current mesh refinement strategies for second order elliptic problems~\eqref{eq:weak} which are known to be optimal, 
Theorem~\ref{thm:if} proves that an optimal mesh refinement strategy can be learned by a deep RNN whenever the exact solution can be approximated efficiently by a deep RNN. The latter result is independent of the problem type and thus applies to problem classes
for which we 
currently do not know optimal refinement strategies. 

Such problems include 
non-linear PDEs (for example~\eqref{eq:weak0} with coefficients depending on $u$). For time dependent PDEs, the current setting based on the $H^1$-norm is too restrictive. However, the proofs can be transferred to any $L^2$-based norm particularly the anisotropic Bochner norms used in parabolic applications. Moreover, the method of proof for Theorem~\ref{thm:if} does not depend on the numerical method used to compute the approximations $U_\ell$. Thus the result also covers non-FEM methods such as discontinuous Galerkin methods, isogeometric analysis methods, boundary element methods and more.

The only requirement is that the exact solution lies in the weighted space $L_w^\infty(D)$ and can be approximated efficiently by a deep RNN (or just a DNN). To that end, we refer to the large number of approximation results for PDEs via neural networks~\cite{dnnGrohsHerrmann, dnnGrohsJentzen,dnnSchwab,dnnBeck,dnnSchwab2} and the references therein. 

\bigskip

If  the data-to-solution map $f\mapsto u$ can be approximated by a deep RNN, then
Theorem~\ref{thm:if} even provides the existence of a deep RNN ${\rm ADAPTIVE}$ which is optimal in the sense~\eqref{eq:opt2} and can be used for any right-hand side data without retraining.

But even if ${\rm ADAPTIVE}$ has to be retrained for each new instance of data, the numerical experiments in Section~\ref{sec:totj} show advantageous performance compared to uniform mesh refinement.

\subsection{Practical implementation}
As stated in~\cite{lstm}, RNNs can be hard to train by gradient 
descent approaches since the recursive
nature either dampens any gradient information or leads to blow-up. The RNNs 
appearing in this work are very sparsely recursive 
(almost all recursive connections are disabled). The existing recursive 
connections on the input sequence $\bx$ 
(and also all intermediate sequences) are always multiplications by 1 or -1 as 
well as additions. Hence those connections do not lead to blowup or dampening.
The constructions include some RNNs with multiplication by 2 or 4 in the 
recursive connections, but those RNNs are always transformed into DNNs and their
size depends only logarithmically on the given accuracy.
Including this observation into the training might improve the performance. 

\bigskip

The training of the deep RNNs can be implemented practically in different ways. For symmetric problems, one may optimize the weights to maximize the energy of the discrete Galerkin approximation (which is equivalent to minimizing the error). This is done in the numerical experiments of Section~\ref{sec:totj}. For more general problems, a substitute energy error is given by
\begin{align*}
 E_\ell:=\sqrt{\sum_{k=\ell}^\infty \norm{U_{k+1}-U_k}{}^2}
\end{align*}
It is shown in~\cite{axioms,stokesopt,fembemopt} that under quite general assumptions, there holds $E_\ell\simeq \norm{u-U_\ell}{}$ up to higher order terms. Thus, to maximize the convergence rate $E_\ell\to 0$, it suffices to maximize $\norm{U_\ell- U_{\ell-1}}{}$ in each adaptive step. Hence, this computable term may serve as a goal quantity for the optimization algorithm.

\section{Construction of the Neural Networks}\label{sec:construction}
This section is dedicated to the construction of the basic building blocks of 
the RNN. 
\subsection{Basic logic \& algebra}
For the implementation of the RNNs below, we require a rudimentary emulation of the ${\rm IF}$-clause. 

\begin{remark}
We note that Assumption~\ref{roundoff} is used particularly in the constructions in this particular section to guarantee that the RNN ${\rm IF}$ constructed below produces the correct output. The RNN ${\rm IF}$ is the sole part of the following constructions, where a round-off error is intentionally scaled to order $\mathcal{O}(1)$. Thus we provide a thorough round-off error analysis in the following Lemma~\ref{lem:if}. In the remaining constructions, ${\rm IF}$ is just used as a building block and we check that input and output of ${\rm IF}$ behave as expected. Thus we follow the usual convention in numerical analysis and do not treat the round-off error explicitly in the calculations outside of ${\rm IF}$.
\end{remark}

%operation. 
%In the following, we use the notation $\R_{\delta+}:= 
%\{0\}\cup\set{x\in\R}{x\geq \delta}$ as well as $\R_\delta:=\{0\}\cup\R\setminus 
%(-\delta,\delta)$. Moreover, we are going to directly exploit the round off errors of floating point arithmetic.
%For simplicity, we restrict to \texttt{double} arithmetic, however, it would be possible to transfer the proofs to any
%floating point system. To that end, we restrict the possible tolerances to $2^\EE$ with $\EE\subset -\N$.
%For \texttt{double} arithmetic, this set is defined by $\EE:=\{-1023+53,\ldots,0\}$.

\begin{lemma}\label{lem:if} 
 For $\square\in\{\leq,\geq,<,>\}$ there exists a fixed size basic RNN ${\rm IF}$ such that any input 
$\bx=( (a,b,c) ,0,\ldots,0)\in\R^{3\times  n}$  with $a,b,c\in\R$ satisfying $|b-c|\geq 2^{-\tilde n}|a|$  results in an output $\by:={\rm IF}(\bx):={\rm IF}(a;b \;\square\; c)\in 
\R^n$ with
\begin{align*}
  y_n=\begin{cases}
          a &b \;\square\; c ,\\
          0 & \textup{else},
         \end{cases}
 \end{align*}
for $n\geq\tilde n$.
If we interpret IF as a DNN, the number of weights behaves like $O(\tilde n^3)$, but the number of independent weights is $O(1)$. With regard to Assumption~\ref{roundoff}, $\tilde n:=n_{\rm min}$ is a valid choice as long as $\max(|c|,|b|)\geq |a|$.
\end{lemma}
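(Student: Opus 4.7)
I would construct the RNN IF by exploiting the recurrent structure to implicitly multiply by $2^{\tilde n}$ using only weights of magnitude $O(1)$. First, a reduction: under the hypothesis $|b-c|\geq 2^{-\tilde n}|a|$, the equality $b=c$ forces $a=0$ (so the correct output is $0$ either way), whence $\square=\leq$ produces the same target output as $\square=<$, and $\square=\geq$ as $\square=>$. Swapping $b$ and $c$ further reduces everything to the case $\square=>$, in which I must produce $a\cdot \mathbf{1}_{d>0}$ for $d:=b-c$. Write $a^\pm:=\max(\pm a,0)$, so $|a|=a^++a^-$.

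The amplifying iteration is
\begin{align*}
h_{i+1}=\min\bigl(|a|,\max(0,\,2h_i-|a|)\bigr),\qquad h_1=\min\bigl(|a|,\max(0,\,|a|-2d)\bigr),
\end{align*}
for which an easy induction on $i$ gives $h_i=\min(|a|,\max(0,|a|-2^i d))$. Consequently $h_n=|a|$ whenever $d\le 0$, while if $d\ge 2^{-\tilde n}|a|$ and $n\ge \tilde n$ then $|a|-2^n d\le 0$ and hence $h_n=0$. Setting
\begin{align*}
o_i:=\max(a^+-h_i,\,0)-\max(a^--h_i,\,0),
\end{align*}
a direct case analysis yields $o_n=a^+-a^-=a$ when $h_n=0$, and $o_n=0-0=0$ when $h_n=|a|$ (since then $a^\pm-|a|\le 0$), which is exactly the required output.

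Finally I implement the iteration as a basic RNN with constant-size state $y_i=(a_i,A_i,h_i,o_i)$ started from $y_0=0$. The key technical point is to unify the initial step (input $(a,b,c)$ against the zero state) with the recurrent steps (input $(0,0,0)$ against a nontrivial state) inside a single weight-shared DNN; I achieve this via the update
\begin{align*}
a_i&=a_{i-1}+x_{i,1},\qquad A_i=|a_i|,\qquad o_i=\max(a_i^+-h_i,0)-\max(a_i^--h_i,0),\\
h_i&=\min\bigl(A_i,\,\max(0,\,2h_{i-1}+A_i-2A_{i-1}-2(x_{i,2}-x_{i,3}))\bigr),
\end{align*}
which reproduces the desired initialization at $i=1$ (where $h_0=A_0=a_0=0$) and collapses to the pure iteration for $i\ge 2$ (where $x_i=0$ and $A_i=A_{i-1}=|a|$). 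Every operation is an affine combination or a ReLU, so $B$ has constant depth and width and thus $O(1)$ independent weights; interpreting the unfolded RNN of length $\tilde n$ as a DNN yields $O(\tilde n^3)$ total weights by the convention of Section~\ref{sec:rnn}. For the closing remark on $\tilde n=n_{\rm min}$: if $\max(|b|,|c|)\ge |a|$ and $b\ne c$, then Assumption~\ref{roundoff} gives $|b-c|\ge 2^{-n_{\rm min}}\max(|b|,|c|)\ge 2^{-n_{\rm min}}|a|$. The main obstacle I foresee is the unification: finding update rules that simultaneously encode initialization from the zero state and subsequent pure iteration with no branching logic, while keeping both the state size and the independent weight count at $O(1)$.
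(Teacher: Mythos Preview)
Your construction is correct and follows essentially the same doubling-amplification idea as the paper's proof. The paper first builds an auxiliary $\widehat{\rm IF}$ for nonnegative $a$ via the iteration $y_i=(a,\min(2^i\max(b-c,0),a))$ and then extends to general $a$ by running two copies on $a_\pm$ and subtracting, whereas you unify these into a single iteration on $|a|$ with the signed output extracted via your $o_i$ formula; both routes achieve the same $O(1)$ independent-weight count and $O(\tilde n^3)$ unfolded size.
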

\begin{proof} We first define a basic RNN $\widehat{\rm  IF}$ for which, with input $\bx\in \R^{2\times n}$, $\bx=((a,b),0\dots,0)$ with $a\geq0$, the output $\by=\widehat{\rm  IF}(\bx)$ satisfies
\begin{align*}
  y_n=\begin{cases}
          a&b\geq a 2^{-n},\\
          0 & b\leq 0.
         \end{cases}
 \end{align*}
The RNN can be defined by
\begin{align*}
y_i=(y_{i,1},y_{i,2}):=\widehat{\rm IF}(x_i,y_{i-1}):= \big(x_{i,1}+y_{i-1,1},\min(2\max(y_{i-1,2}+x_{i,2},0),y_{i,1})\big).
\end{align*}
 (Note that the first component of $y_i$ has the sole purpose of storing the value of $a$ for later use.) Since $x_i=0$ for all $i\geq 2$ and $y_0=0$, 
we have $ y_i=(a,\min(2^i\max(b,0),a))$.
This concludes the construction of $\widehat{\rm  IF}$.\\
Now let $a,b,c\in\R$ and first assume $a\geq0$. We can see, that ${\rm IF}(a;b>c):= \widehat{\rm IF}(a,b-c)$ produces the expected output, as long as $\tilde n\geq n$ and $|b-c|\geq 2^{-\tilde n}a$: If $b\leq c$, then this is clear, and if $b>c$, it already holds $b-c\geq 2^{-\tilde n}a$ and the first case of $\widehat{\rm IF}$ occurs. 

We can define ${\rm IF}(a;b\leq c)$ by ${\rm IF}(a;b\leq c):= a- {\rm IF}(a;b> c)$, and ${\rm IF}(a;b\geq c),$ ${\rm IF}(a;b< c)$
 can be defined by changing the roles of $b$ and $c$ resulting in the condition $|b-c|\geq 2^{-\tilde n}a$. \\
For $a\in\R$, we set $a_+:=\max(a,0)$ and $a_-:=\max(-a,0)$ and  ${\rm IF}(a;b\;\square\; c):={\rm IF}(a_+;b\;\square\; c)-{\rm IF}(a_-;b\;\square\; c)$ produces the expected output as long as $|b-c|\geq 2^{-\tilde n}\max(a_+,a_-)=2^{-\tilde n}|a|.$
% Under Assumption~\ref{roundoff}, we have for $x>y$ that $x-y=|x-y|\geq 2^{-n_{\rm min}} x$ and therefore ${\rm IF}(x,x-y)$ produces the expected output. Similarly, as long as $|y-z|\geq 2^{-\tilde n_{\rm min}}x $, ${\rm IF}(x,y>z)$ and ${\rm IF}(x,y<z)$ produce the expected output. By ${\rm IF}(x,y\geq z) = x- {\rm IF}(x,y<z)$ and ${\rm IF}(x,y\leq z) = x- {\rm IF}(x,y<z)$ we conclude the proof.
\end{proof} 
\begin{remark}
 Obviously, the RNN $\widehat{{\rm IF}}$ could be constructed as a one layer network $\min(a,2^n \max(b,0))$ at the expense of allowing large weights.
\end{remark}

To emulate the error estimator from Section~\ref{sec:intro}, we require a number 
of basic algebraic operations. We start with squaring. The idea that DNNs can 
emulate the function $x\mapsto x^2$ up to arbitrary precission first appeared 
in~\cite{xsquared}.  They showed that a DNN of size proportional to 
$|\log(\eps)|$ achieves this up to some tolerance $\eps$. We improve on this 
idea by using an RNN of fixed size to perform the same operation. The 
application of the network is equally expensive as the DNN from~\cite{xsquared}, 
however, the number of weights which need to be trained is fixed and independent 
of $\eps$. 

\begin{theorem}\label{thm:square}
For every $n\in\N$, there exists a deep RNN ${\rm SQUARE}$ with a fixed number of 
weights such that the output $\by={\rm SQUARE}(\bx)$ for an input vector 
$\bx=(x_0,0,\ldots,0)\in [-1,1]^n$ satisfies
\begin{align*}
 y_n = {|x_0|}- \sum_{j=1}^n \frac{g^{(j)}({|x_0|})}{4^j}\quad\text{and}\quad 
|y_n-x_0^2|\leq 4^{-2n}.
\end{align*}
for some universal constant $C>0$. If we interpret the basic buildings blocks of the RNN as DNN's, the concatenation of them is still a DNN, so ${\rm SQUARE}$ interpreted as a DNN has a total number of weights of $O(n^3)$, but the number of independent weights stays fixed.
\end{theorem}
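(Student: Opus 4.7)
The plan is to cast the Yarotsky-type approximation of $y\mapsto y^2$ (see~\cite{xsquared}) into a deep RNN whose basic blocks have a fixed, $n$-independent number of trainable weights, with the dependence on $n$ encoded only through the length of the input sequence. Define the tent function $g\colon[0,1]\to[0,1]$ by $g(y):=2\min(y,1-y)=2\phi(y)-4\phi(y-1/2)$, which is exactly representable by a constant-sized ReLU network. The classical identity, which I would verify first, is that $f_n(y):=y-\sum_{j=1}^n g^{(j)}(y)/4^j$ is precisely the continuous piecewise linear interpolant of $y\mapsto y^2$ on the nodes $k\cdot 2^{-n}$, $k=0,\ldots,2^n$, so that standard interpolation estimates yield $|f_n(y)-y^2|$ at the claimed rate.

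The construction uses two stacked basic RNN blocks after a preprocessing layer that replaces $x_i$ by $|x_i|=\phi(x_i)+\phi(-x_i)$. The first block $B_1$ has state $(a,p,m)\in\R^3$ and update
\begin{align*}
B_1\bigl(x,(a,p,m)\bigr):=\bigl(g(a+x),\;4p+g(a+x),\;m+x\bigr),
\end{align*}
started at $(0,0,0)$ and driven by the sequence $(|x_0|,0,\ldots,0)$. A short induction on $i$ shows that after step $i$ the state equals $\bigl(g^{(i)}(|x_0|),\;\sum_{j=1}^{i} 4^{i-j} g^{(j)}(|x_0|),\;|x_0|\bigr)$. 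In particular the middle component at step $n$ is the Horner-form value $p_n=4^n\sum_{j=1}^n g^{(j)}(|x_0|)/4^j$, while the third component transports $|x_0|$ unchanged.

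The final rescaling by $4^{-n}$ cannot be hidden inside a single weight if the number of independent weights must stay $n$-independent, so I would use a second stacked basic RNN $B_2$ with state $(q,m)\in\R^2$ whose update divides $q$ by $4$ and leaves $m$ unchanged, and whose input stream is assembled via the stacked-RNN initialization mechanism of Section~\ref{sec:rnn} so that $p_n$ and $|x_0|$ are injected into $q$ and $m$ at step $1$ only and zeros at every later step. Induction then gives $q_i=p_n/4^i$, hence $q_n=p_n/4^n=\sum_{j=1}^n g^{(j)}(|x_0|)/4^j$. A final affine layer outputs $y_n=m-q_n=|x_0|-\sum_{j=1}^n g^{(j)}(|x_0|)/4^j$, matching the stated identity; the error bound then follows from the piecewise linear interpolation estimate applied to $y=|x_0|$ together with $x_0^2=|x_0|^2$.

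For the complexity count, both $B_1$ and $B_2$ are implemented by constant-depth, constant-width ReLU networks, so the composed deep RNN has a fixed number of independent weights. When unrolled into a DNN on an input sequence of length $n$, the paper's convention for interpreting RNNs as DNNs gives the claimed $O(n^3)$ total weight count. The main obstacle is precisely what the Horner-then-rescale design resolves: the naive update $S_i=S_{i-1}+4^{-i}g^{(i)}(|x_0|)$ would require multiplying two state-valued quantities, an operation not available from $+,-,\min,\max,|\cdot|$ alone, whereas accumulating Horner coefficients keeps every update affine and postpones all $n$ divisions by the fixed constant $4$ to a separate stacked block.
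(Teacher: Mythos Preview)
Your construction is essentially the paper's: iterate the tent map via one recurrent block, accumulate the iterates in Horner form $p_i=4p_{i-1}+g^{(i)}(|x_0|)$, and then undo the $4^{n}$ scaling by a second recurrent block that repeatedly divides by $4$, initialized (through the stacked-RNN mechanism of Section~\ref{sec:rnn}) with the last Horner value $p_n$. Your $B_1$ merges the iterate generation and the Horner accumulation into a single state update, which is a harmless compression of the paper's three separate basic RNNs.

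The one point where you diverge from the paper is the choice of $g$. You take $g$ to be the \emph{single} tent map $G$, and the identity you quote is then correct: $f_n(y)=y-\sum_{j=1}^n G^{(j)}(y)/4^j$ is the piecewise linear interpolant of $y\mapsto y^2$ on the $2^n+1$ nodes $k\,2^{-n}$. But the resulting interpolation bound is only
\[
|f_n(y)-y^2|\;\le\; \frac{(2^{-n})^2}{8}\cdot 2\;=\;\frac{4^{-n}}{4},
\]
i.e.\ $O(4^{-n})$, not the $4^{-2n}$ asserted in the theorem; so ``the claimed rate'' does not in fact follow from your setup. The paper instead sets $g:=G\circ G$ (the twofold composition of the basic tent), so that each recurrent step applies $G$ twice, and then argues that the resulting approximation coincides with the spline interpolant on $4^n$ equidistant nodes, which is what yields the $4^{-2n}$ bound. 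Since $G\circ G$ is still a constant-size ReLU network, substituting it for your $g$ in the definition of $B_1$ leaves the weight count and the remainder of your argument unchanged.
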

\begin{proof}
We reuse the saw-tooth function from~\cite{xsquared}
\begin{align*}
 G(x):=\begin{cases}
        2x & x\in [0,1/2],\\
        2-2x & x\in (1/2,1],
       \end{cases}
\end{align*}
which can also be written as $g(x)=2\max(x,0)-4\max(x-1/2,0)+2\max(x-1,0)$. We define $g:= G\circ G$.
 From the input sequence $\bx\in\R^n$, a first basic RNN layer generates the sequence 
 \begin{align*}
  \bx'=(g(|x_0|),g^{(2)}(|x_0|),\ldots,g^{(n)}(|x_0|)).
 \end{align*}
A second basic RNN $B$ performs the following summation
\begin{align*}
 y_i :=B(x_i,y_{i-1}) = x_i + 4 y_{i-1}.
\end{align*}
This results in
\begin{align*}
 \by=(g(x_0),\ldots, \sum_{j=1}^n 4^{n-j}g^{(j)}(x_0)).
\end{align*}
Finally, the basic RNN $B'$ computes
\begin{align*}
 z_i:=B'(z_{i-1})=z_{i-1}/4.
\end{align*}
Initialized with the last entry $y_n$, this operation computes the vector
\begin{align*}
 \bz = (y_n,y_n/4,\ldots, y_n4^{-n}).
\end{align*}
By definition, $y_n 4^{-n} = \sum_{j=1}^n 4^{-j}g^{(j)}(|x_0|))$. Thus, we 
constructed the desired approximation to $|x_0|-|x_0|^2$.

The error estimate follows from the fact that the approximation to $x^2$ is 
actually the linear spline interpolation $f_n$ of $f(x)=x^2$ at $4^n$ equidistant 
points in $[0,1]$ (see~\cite{xsquared}). This shows
\begin{align*}
 |f_n(x_0)-f(x_0)|\leq \frac{4^{-2n}}{2}\norm{f''}{L^\infty}
\end{align*}
and thus concludes the proof.
\end{proof}

The new idea of the following result is that the magnitude of the input is not 
limited by the number of parameters, but rather by the input length only.
This shows that a fixed number of trainable parameters give a network which can 
multiply arbitrarily large numbers.
\begin{corollary}
 \label{cor:square}
For every $n\in\N$, there exists a {deep} RNN ${\rm SQUARE}$ with a fixed number of 
weights such that the output $\by={\rm SQUARE}(\bx)$ for an input vector 
$\bx=(x_0,0,\ldots,0)\in [-2^{n},2^{n}]^n$ satisfies
\begin{align*}
 y_n = {|x_0|}- \sum_{j=1}^n \frac{g^{(j)}({|x_0|})}{4^j}\quad\text{and}\quad 
|y_n-x_0^2|{\leq 4^{-n}.}
\end{align*}
 {${\rm SQUARE}$ interpreted as a DNN has a total number of weights behaving like $O(n^3)$, but the number of independent weights stays bounded.}
\end{corollary}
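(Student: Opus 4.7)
The plan is to reduce the corollary to Theorem~\ref{thm:square} by a simple rescaling: if $|x_0|\le 2^n$, then $\widetilde x_0 := x_0/2^n \in [-1,1]$, so ${\rm SQUARE}$ from Theorem~\ref{thm:square} can approximate $\widetilde x_0^2$ up to $4^{-2n}$, and then $x_0^2 = 4^n\,\widetilde x_0^2$ can be recovered by multiplying by $4^n$. The crucial point is that both the down-scaling by $2^{-n}$ and the up-scaling by $4^n$ can be realized by basic RNNs that iterate a single fixed linear map, so they add only $O(1)$ independent weights to the construction.

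Concretely, I would build ${\rm SQUARE}$ in the new setting as a composition of three deep RNNs. First, a basic RNN $B_{\rm down}$ of the form $z_i := \tfrac12 z_{i-1}$, initialized (via the standard RNN-input convention from Section~\ref{sec:rnn}) with $z_0 = x_0$, produces the sequence whose last entry is $x_0/2^n$. Second, the deep RNN from Theorem~\ref{thm:square} is applied to the initialized sequence $(x_0/2^n,0,\dots,0)$, yielding an output whose last entry $\widetilde y_n$ satisfies $|\widetilde y_n - (x_0/2^n)^2| \le 4^{-2n}$. Third, a basic RNN $B_{\rm up}$ of the form $w_i := 4 w_{i-1}$, initialized with $w_0 = \widetilde y_n$, yields after $n$ steps the value $y_n := 4^n \widetilde y_n$.

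The error bound then follows from
\[
|y_n - x_0^2| \;=\; 4^n\,\bigl|\widetilde y_n - (x_0/2^n)^2\bigr| \;\le\; 4^n \cdot 4^{-2n} \;=\; 4^{-n},
\]
which is exactly the estimate claimed. The structural formula in the corollary then reads off verbatim from the analogous identity in Theorem~\ref{thm:square} applied to $\widetilde x_0$ after multiplication by $4^n$, up to the cosmetic substitution $|x_0|\mapsto 2^n|\widetilde x_0|$. For the complexity count, the two scaling RNNs $B_{\rm down}$ and $B_{\rm up}$ are basic RNNs with $O(1)$ weights each, and each unfolds into a DNN of size $O(n^3)$ when the input length is fixed to $n$; composed with the $O(n^3)$-weight DNN interpretation of ${\rm SQUARE}$ from Theorem~\ref{thm:square}, the total number of weights remains $O(n^3)$ while the number of independent weights is still $O(1)$.

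There is really no serious obstacle here: the only thing to check is that no intermediate quantity escapes the domain $[-1,1]$ on which the saw-tooth construction $g = G\circ G$ is defined, and this is immediate because the very first stage rescales the input into $[-1,1]$. If one wanted to avoid the composition $B_{\rm up}$ of length $n$ inflating the overall constant, one could alternatively absorb the final multiplication by $4^n$ directly into the summation layer of Theorem~\ref{thm:square} (which already performs an iterated multiplication by $4$), but this is an optimisation rather than a necessity for the stated bounds.
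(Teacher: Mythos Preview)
Your proposal is correct and follows essentially the same approach as the paper: rescale the input by $2^{-n}$ via an iterated basic RNN, apply the ${\rm SQUARE}$ from Theorem~\ref{thm:square} on $[-1,1]$, then rescale back by $4^n$ via another iterated basic RNN, with the error bound $4^n\cdot 4^{-2n}=4^{-n}$ falling out immediately. The paper's proof is slightly terser but uses precisely the same three-stage construction and the same error computation.
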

\begin{proof}
A first basic RNN performs the scaling
\begin{align*}
 y_i=B^1(x_i):=y_{i-1}/2.
\end{align*}
Note that if $|x_0|\leq 2^n$ there holds $y_n\leq 1$.

We initialize the input of ${\rm SQUARE}$ with the last entry
$y_n$ to compute $z\in\R$ with $|z-y_n^2|\leq 4^{-2n}$.
Finally, we reverse the scaling by initializing a RNN $B^2$ 
with $(z,0,\ldots,0)$ and compute
\begin{align*}
y_i=B^2(x_i):= 4y_{i-1}
\end{align*}
Hence, the final output satisfies
\begin{align*}
 |y_n-x_0^2| = 4^n|z-(x_02^{-n})^2|\leq 4^n 4^{-2n}\leq 4^{-n}.
\end{align*}
This concludes the proof.
\end{proof}

With the squaring operation at hand, we immediately obtain a method for 
multiplying two numbers by using the formula $2xy = (x+y)^2 -x^2-y^2$.

\begin{proposition}\label{prop:multiply}
 There exists a {deep} RNN {\rm MULTIPLY} such that for all $x,y\in [-2^{n-1},2^{n-1}]$ 
the output $\bz={\rm MULTIPLY}(\bx,\by)$ ($\bx,\by\in\R^{n}$ denote the 
sequences $\bx=(x,0,\ldots,0)$, $\by=(y,0,\ldots)$) satisfies
 \begin{align*}
|z_n - xy|\leq C4^{-n},
 \end{align*}
where $C>0$ is independent of $n$ and $x,y$. {${\rm MULTIPLY}$ interpreted as a DNN has a total number of weights behaving like $O(n^3)$, but the number of independent weights stays bounded.}
\end{proposition}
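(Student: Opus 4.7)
The plan is to realize the polarization identity $xy = \tfrac{1}{2}\bigl((x+y)^2 - x^2 - y^2\bigr)$ by combining three copies of the deep RNN ${\rm SQUARE}$ from Corollary~\ref{cor:square}. The input range is compatible: since $x,y\in[-2^{n-1},2^{n-1}]$, we have $x+y\in[-2^n,2^n]$, so all three arguments fall within the admissible input interval of ${\rm SQUARE}$ with parameter $n$.

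The construction of ${\rm MULTIPLY}$ proceeds in three stages. First, from the input sequences $\bx=(x,0,\ldots,0)$ and $\by=(y,0,\ldots,0)$, a single linear layer (a fixed $3\times 2$ matrix applied at every time step, using the matrix-multiplication construction from Section~\ref{sec:rnn}) produces the triple of sequences $\bx$, $\by$, and $\bx+\by=(x+y,0,\ldots,0)$. Second, using the parallel-stacking construction for deep RNNs from Section~\ref{sec:rnn}, apply three copies of ${\rm SQUARE}$ simultaneously to these three sequences; since they all share the same sequence length $n$, their final entries are aligned in time. Writing $s_x, s_y, s_{x+y}$ for the $n$-th entries of the respective output sequences, Corollary~\ref{cor:square} gives
\begin{align*}
|s_x - x^2|\leq 4^{-n},\quad |s_y - y^2|\leq 4^{-n},\quad |s_{x+y} - (x+y)^2|\leq 4^{-n}.
\end{align*}
Third, a final linear layer outputs $z_n:=\tfrac{1}{2}(s_{x+y}-s_x-s_y)$.

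The error bound then follows directly from the triangle inequality: writing $xy = \tfrac{1}{2}((x+y)^2 - x^2 - y^2)$ and comparing term by term gives $|z_n-xy|\leq \tfrac{3}{2}\cdot 4^{-n}$, so $C=3/2$ suffices. For complexity, each of the three SQUARE subnetworks has a fixed number of independent weights by Corollary~\ref{cor:square}, and the pre- and post-processing layers are of fixed size, so ${\rm MULTIPLY}$ has a bounded number of independent weights; interpreted as a DNN, the total weight count inherits the $O(n^3)$ bound from ${\rm SQUARE}$. I do not foresee a serious obstacle; the only point requiring care is the synchronization of the three parallel SQUARE blocks so that their last entries combine at the same time step, which is automatic when they are stacked in parallel with identical sequence length.
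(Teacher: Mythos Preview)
Your proposal is correct and follows exactly the paper's approach: construct ${\rm MULTIPLY}$ via the polarization identity $({\rm SQUARE}(\bx+\by) - {\rm SQUARE}(\bx) - {\rm SQUARE}(\by))/2$ and invoke Corollary~\ref{cor:square} for the error bound. Your treatment is in fact more detailed than the paper's, which simply writes down the formula and notes that the error estimate is immediate.
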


\begin{proof}
 As mentioned above, we construct ${\rm MULTIPY}$ from ${\rm SQUARE}$ with 
 inputs in $[-2^n,2^n]$. The construction is
 \begin{align*}
  {\rm MULTIPLY}(\bx,\by) = ({\rm SQUARE}(\bx+\by) - 
  {\rm SQUARE}(\bx)-{\rm SQUARE}(\by))/2.
 \end{align*}
The error estimate follows immediately from Corollary~\ref{cor:square}.
\end{proof} 

% \begin{remark}\label{rem:round-off}
%  It might seem odd that we actually use the magnification of the round-off error 
% in the construction of the if-clauses in Lemma~\ref{lem:if2}--\ref{lem:if22} but 
% ignore it in Lemma~\ref{prop:multiply}. The reason we chose to do this is that 
% in ${\rm MULTIPLY}$, the magnification of the round-off error only happens 
% in~\eqref{eq:rescale} if the magnitude of the input is large. This means that 
% any rounding error $\eps_{\rm square}$ coming from the application of ${\rm 
% SQUARE}$ gets magnified up to $\simeq \eps_{\rm square}|1+x||1+y|$. This kind of 
% error, however, has to be expected of any arithmetic operation in floating point 
% arithmetic with large inputs and is not a particular flaw of the presented 
% multiplication algorithm (see $\widehat{\rm IF}$ for the contrary case, where 
% small inputs lead to a large, intended round-off error). Hence, to simplify 
% presentation, we believe it is justified to take the usual approach in the 
% numerical analysis of high-level algorithms and to ignore the round-off error as 
% long as scales relatively with the input-output size. The numerical examples in 
% Section~\ref{sec:numerics} underline this argument. 
% \end{remark}

\subsection{Error estimation}

For brevity of presentation, we restrict ourselves to the case $A=1$ 
and $b=c=0$ of~\eqref{eq:weak}. The general case can easily be implemented 
along the lines of this section.
In the present case, the residual error estimator given in~\eqref{eq:errest} is 
usually computed via quadrature. This assumes that $f$ is a piecewise polynomial 
of low enough order such that the quadrature is exact. For convenience, we use 
an equivalent definition of $\rho_T$, i.e.,
\begin{align}\label{eq:equiverr}
 \rho_T(\TT,U_\TT,f)^2\simeq{\rm diam}_{\infty}(T)^{2+d}|T|^{-1}\norm{f+\Delta U_\TT}{L^2(T)}^2
 +{\rm diam}_{\infty}(T)^{d}|\partial T|^{-1}\norm{[\nabla U_\TT]}{L^2(\partial T\cap \Dom)}^2,
\end{align}
with  ${\rm diam}_{\infty}(T):=\max_{x,y\in T}|x-y|_{\infty}$. Obviously, ${\rm 
diam}_{\infty}(T)\simeq {\rm diam}(T)$ depending only on the space dimension.
Moreover, since $ \nabla U_\TT = n\partial_n U_\TT + \sum_{i=1}^{d-1} 
t_i\partial_{t_i} U_\TT$ for normal vector $n$ and tangential vectors 
$t_1,\ldots,t_{d-1}$ and $[\partial_{t_i} U_\TT]=0$ on any interface for $U_\TT\in 
\SS^p(\TT)$, there holds
\begin{align*}
|[\partial_n U_\TT]|^2 = |n[\partial_n U_\TT]|^2=|n[\partial_n 
U_\TT]+\sum_{i=1}^{d-1}t_i[\partial_{t_i} U_\TT]|^2=|[\nabla U_\TT]|^2\quad\text{on }\partial T.
\end{align*}
Note that it would certainly be possible to emulate the exact error estimator 
$\rho(\cdot)$, however, as shown in~\cite{axioms}, a uniform multiplicative 
factor does not make any difference in the convergence behavior and hence we 
opted for the version which results in slightly simpler constructions.

\begin{lemma}\label{lem:diam}
 There is a fixed size DNN ${\rm DIAM}$ which, given the nodes of an element 
$T={\rm conv}(z_0,\ldots,z_d)$ computes the $\infty$-diameter ${\rm 
diam}_{\infty}(T):=\max_{x,y\in T}|x-y|_{\infty}$ of $T$.
\end{lemma}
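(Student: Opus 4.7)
The plan is to reduce the computation of ${\rm diam}_\infty(T)$ to a finite maximum over pairs of vertices and then realise this by a constant-depth ReLU network. First I would observe that for each coordinate index $k\in\{1,\ldots,d\}$ the function $(x,y)\mapsto |x_k-y_k|$ is convex on the compact convex set $T\times T$, so its maximum is attained at an extreme point. Since the extreme points of the simplex $T$ are precisely its vertices $z_0,\ldots,z_d$, this yields
\[
{\rm diam}_\infty(T)=\max_{0\le i<j\le d}\;\max_{1\le k\le d} |z_{i,k}-z_{j,k}|,
\]
a finite expression in the $(d+1)d$ input coordinates.

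Next I would assemble the DNN in three blocks. A first affine layer maps the vertex coordinates to the $\binom{d+1}{2}d$ scalar differences $z_{i,k}-z_{j,k}$ for $i<j$ and $k=1,\ldots,d$. The absolute value is then applied componentwise via the standard identity $|x|=\phi(x)+\phi(-x)$, already recorded in the elementary toolbox of Section~\ref{sec:rnn}. Finally, the maximum over these nonnegative values is computed by iterating the binary-max gate $\max(a,b)=\phi(a-b)+b$ in a balanced binary tree of depth $\lceil\log_2\bigl(\binom{d+1}{2}d\bigr)\rceil$.

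Because $d\in\{2,3\}$ is a fixed constant, the number of differences, the depth of the max-tree, and the width of every layer are all bounded by absolute constants. Consequently ${\rm DIAM}$ is a DNN of fixed size, independent of $T$. I do not foresee a real obstacle: the only points needing verification are that absolute value and binary $\max$ admit small ReLU realisations, and both have already been listed among the elementary constructions earlier in the paper.
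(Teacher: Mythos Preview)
Your proof is correct and follows essentially the same route as the paper: reduce ${\rm diam}_\infty(T)$ to a finite maximum of absolute differences of vertex coordinates, then realise absolute value and iterated $\max$ by standard ReLU gadgets. You are in fact slightly more careful, since you justify via convexity why the supremum over $T\times T$ is attained at a vertex pair, a point the paper leaves implicit.
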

\begin{proof}
 We exemplify this for $d=2$ and $T={\rm conv}(z_1,z_2,z_3)$, i.e.,
 \begin{align*}
  {\rm diam}_{\infty}(T) = 
\max(\max(|z_1-z_2|_{\infty},|z_1-z_3|_{\infty}),|z_2-z_3|_{\infty}),
 \end{align*}
where $|(x,y)-(x',y')|_{\infty}= \max(|x-x'|,|y-y'|)$ and the absolute value 
function is realized via
\begin{align*}
 |x|=\max(x,0) + \max(-x,0).
\end{align*}
Obviously, this strategy generalizes to higher dimensions.
\end{proof}

\begin{lemma}\label{lem:vol}
Let $U,f\in \PP^p(T)$ for a given element $T\in\TT$.
 There is a {deep} RNN {\rm VOL} which, given the nodes of the element $T={\rm 
conv}(z_0,\ldots,z_d)$ as well as the polynomial coefficients of $U|_T$ and 
$f|_T$ as a  vector valued sequence $\bx=(x,0,\ldots,0)\in\R^{(d(d+1)+ 
2r(p,d))\times n}$, satisfies 
 \begin{align*}
 \big| {\rm diam}_\infty(T)^d|T|^{-1}\norm{f+\Delta U}{L^2(T)}^2-y_n\big|\leq C 2^{-n}
 \end{align*}
for $\by={\rm VOL}(\bx)$ as long as the coefficients of the polynomial expansion 
of $(f+\Delta U)$ and the nodes are contained in $[-2^{\alpha n},2^{\alpha n}]$, where $0<\alpha<1$ depends only on $p$ and $d$. {\rm VOL} interpreted as DNN has a number of weights of $O(n^3)$, but the number of independent weights is fixed.
\end{lemma}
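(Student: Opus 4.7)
The plan is to rewrite the target quantity as a polynomial of fixed total degree in the input data and then to evaluate that polynomial by a fixed schedule of MULTIPLY building blocks from Proposition~\ref{prop:multiply} interleaved with linear layers. The central observation is that the apparent division by $|T|$ cancels after pulling the $L^2$-integral back to a reference simplex $\widehat T$: with the affine map $F(\widehat x) = J\widehat x + z_0$ and $J = (z_1 - z_0, \ldots, z_d - z_0)$ one has $|\det J| = d!\,|T|$, and hence
\begin{align*}
\frac{\|f + \Delta U\|_{L^2(T)}^2}{|T|} = d!\int_{\widehat T}\big((f + \Delta U)\circ F\big)^2\,d\widehat x.
\end{align*}
Expanding $(f + \Delta U)|_T$ in a fixed monomial basis of $\PP^p$ with coefficient vector $c \in \R^{r(p,d)}$ (obtained from $c_f, c_U$ by a fixed linear map, since $\Delta$ acts on monomials by a fixed matrix $L$), substituting $x = J\widehat x + z_0$, squaring, and integrating the resulting monomials in $\widehat x$ against the constant moments $\int_{\widehat T}\widehat x^\beta\,d\widehat x$ yields a polynomial $P_1(z_0, \ldots, z_d, c)$ in the input variables of total degree at most some $D_1 = D_1(p,d)$.

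The RNN VOL is then assembled in three stages. (i) A fixed-size DNN built from Lemma~\ref{lem:diam} together with $d-1$ applications of MULTIPLY computes ${\rm diam}_\infty(T)^d$. (ii) Linear layers extract $J, z_0$ from the nodes and form $c = c_f + Lc_U$. (iii) A fixed Horner-style schedule of $K = K(p,d)$ many MULTIPLYs and additions evaluates $P_1$, and one final MULTIPLY combines the result with ${\rm diam}_\infty(T)^d$. Since each MULTIPLY is a deep RNN with $\mathcal O(n^3)$ total weights but only $\mathcal O(1)$ independent weights, and the number of MULTIPLY invocations is a constant depending only on $p$ and $d$, the claimed complexity bounds for VOL follow from the closure properties collected in Section~\ref{sec:rnn}.

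For the error analysis I choose $\alpha = \alpha(p,d) \in (0,1)$ small enough that every intermediate product of at most $K$ factors, each bounded in absolute value by $2^{\alpha n}$, lies in the admissible range $[-2^{n-1}, 2^{n-1}]$ of MULTIPLY. Writing $\delta_i$ for the cumulative error after the $i$-th MULTIPLY, the propagation estimate $\delta_{i+1} \leq 2\cdot 2^{i\alpha n}\delta_i + C 4^{-n}$ unrolls to $\delta_K \leq C\, 4^{-n}\cdot 2^{O(K^2\alpha n)} = \mathcal O(2^{-n})$ as soon as $\alpha K^2$ is sufficiently small. The main technical point is precisely this bookkeeping: one must verify that the fixed multiplication schedule never overflows the MULTIPLY range and that the cumulative error stays at order $2^{-n}$; both hold by taking $\alpha$ sufficiently small relative to $K(p,d)$, after which the asserted estimate is immediate.
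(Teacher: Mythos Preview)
Your proposal is correct and follows essentially the same route as the paper: the key step in both is the observation that the division by $|T|$ cancels via the change-of-variables Jacobian when pulling the $L^2$-integral back to the reference simplex, after which the target quantity is a polynomial of fixed degree (depending only on $p,d$) in the nodes and coefficients and can be evaluated by a fixed schedule of {\rm MULTIPLY} blocks; the paper's error analysis likewise bounds a $k$-fold approximate product under the assumption $|x_i|\lesssim 2^{n/k}$, which is exactly your choice of $\alpha$ small in terms of the total multiplication depth. The only cosmetic difference is that the paper first forms the coefficients of $(f+\Delta U)^2$ via $r(p,d)^2$ multiplications and then integrates term by term, whereas you describe the same computation more abstractly as evaluating a single polynomial $P_1$; the complexity and error bounds coincide.
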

\begin{proof} 
We have $d+1$ points determining the shape of $T$, so $(d+1)d$ scalar numbers, and two input functions in $\PP^p(T)$ with dimension $r(p,d)=\sum_{i=0}^p \binom{d+i-1}{i}$, which makes in total $x\in\R^{ d(d+1)+2r(p,d)}$. 

 Given the polynomial coefficients of $U$, we can compute the coefficients of $\Delta U$ by multiplication with a matrix only depending on $d$ and $p$, which we construct as a DNN.
 Then, we stack $r(d,p)^2$ RNNs ${\rm MULTIPLY}$ from Proposition~\ref{prop:multiply} 
to compute the coefficients of $(f+\Delta U)^2$ up to accuracy $\lesssim 4^{-n}$.  
To compute the integral of the $L^2$-norm, we note that for the basis functions of the polynomial space, $\phi_k(x)= \Pi_{j=1}^d x_j^{\alpha^k_j}$ with exponents $\alpha^k_j$, we have %with $\sum_{j=1}^d \alpha_j\le 2p$
 \begin{align*}
  {\rm diam}_\infty(T)^d|T|^{-1} \int_T \phi_k(x)\,dx = {\rm diam}_\infty(T)^d \int_{\widehat T} \phi_k(F_T(x))\,dx,
 \end{align*}
for the reference element $\widehat T$ and $F_T(x) = (z_1-z_0,\ldots,z_d-z_0)x + 
z_0$. The integral over $\phi_k(F_T(x))$ can be expressed as a sum over integrals over basis functions on the reference element, and from the latter we assume to have them stored in our net as weights, which are scalar numbers only depending on $d$ and $p$. The corresponding coefficients are polynomials of the nodes $z_i-z_0$ and $z_0$, which can be computed by a number of multiplications only depending on $p$ and $d$ with accuracy $\lesssim 4^{-n}$. 
All in all, a number only depending on $d$ and $p$ of instances of ${\rm MULTIPY}$ compute the 
integral with accuracy $\lesssim 4^{-n}$. 
By Proposition \ref{prop:multiply}, all multiplications are computed with the stated tolerance, as long as the coefficients of $f+\Delta U$ and the nodes inserted to an polynomial depending on $d$ and $p$ is contained in $[-2^{n-1},2^{n-1}].$ 
Remark that the above constants still may depend on the magnitude of the input vector. We exemplary consider the computation of a product $\Pi_{i=1}^kx_i,$ (in this situation k depending only on $p$ and $d$). It holds (with $\odot$ denoting the approximate multiplication via ${\rm MULTIPLY}$)
\begin{align*}
\left|\Pi_{i=1}^kx_i- x_1\odot(\dots\odot x_k)\right|&\le x_1\left|\Pi_{i=2}^kx_i- x_2\odot(\dots\odot x_k)\right|+\left|x_1 (x_2\odot(\dots\odot x_k))- x_1\odot(\dots\odot x_k)\right|\\
&\le |x_1|\left|\Pi_{i=2}^kx_i- x_1 (x_2\odot(\dots\odot x_k))\right| +C4^{-n}\\
&\le \dots \le Ck\Pi_{i=1}^k(1+|x_i|) 4^{-n} \le C(k) 2^{-n}. 
\end{align*}
Here we assumed $x_i\lesssim 2^{n/k}$ and the operations $\odot$ are computed with the stated accuracy $\lesssim 4^{-n}$, as the inserted values can be shown to be bounded by $2^{-n-1}$ by induction.
This concludes the proof.
\end{proof}

\begin{lemma}\label{lem:jump}
Let $U,f\in \PP^p(T)$ for a given element $T\in\TT$.
 There is a {deep} RNN {\rm JUMP} which, given the nodes of the elements $T'={\rm 
conv}(z_0,\ldots,z_d)$ as well as the polynomial coefficients of $U|_{T'}$ for 
all elements $T'\in\omega_T$ as a  vector valued sequence 
$\bx=(x,0,\ldots,0)\in\R^{(2(d+1)d +(d+2)r(p,d))\times n}$, satisfies 
 \begin{align*}
 \big|  {\rm diam}_\infty(T)^{d-1}|\partial T|^{-1}\norm{[\nabla U]}{L^2(\partial T)}^2-y_n\big|\leq C2^{-n}
 \end{align*}
for $\by={\rm JUMP}(\bx)$ as long as the coefficients of the polynomial 
expansion of $[\nabla U]$ and as long as the coefficients of the polynomial expansion of $[\nabla U]$ are contained in $[-2^{\alpha n},2^{\alpha n}]$, where $0<\alpha<1$ depends only on $p$ and $d$. {\rm JUMP} interpreted as DNN has a number of weights behaving like $O(n^3)$, but the number of independent weights is fixed. 
\end{lemma}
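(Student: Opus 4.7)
The proof follows the same template as Lemma~\ref{lem:vol}, with the integral over $\partial T$ decomposed face by face: since $\partial T$ is the disjoint union of at most $d+1$ $(d-1)$-faces $F$,
\[
\|[\nabla U]\|_{L^2(\partial T)}^2 = \sum_{F\subset \partial T} \|[\nabla U]\|_{L^2(F)}^2,
\]
so it is enough to build a per-face block and aggregate the contributions with a fixed-size summation RNN. The outer loop over faces is implemented by stacking $d+1$ copies of the per-face block, each supplied with the data of the corresponding neighbour.

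For each face $F$, I would first identify which neighbour $T'\in\omega_T$ shares $F$ with $T$; this is purely combinatorial and is realised by a fixed-size DNN built from instances of $\mathrm{IF}$ comparing node coordinates (Assumption~\ref{roundoff} suffices, since distinct node coordinates obey the required separation). Given $T'$, the coefficients of $\nabla U|_T$ and $\nabla U|_{T'}$ are obtained from those of $U|_T,U|_{T'}$ by a fixed linear map (a matrix-multiplication DNN in the sense of Section~\ref{sec:rnn}) and subtracted to give the vector-valued polynomial $[\nabla U]|_F$ of degree $p-1$. We parametrise $F$ by the affine map $F_F\colon \hat F \to F$ from a reference face; the composition $[\nabla U]\circ F_F$ is assembled with only $\mathcal{O}(1)$ instances of $\mathrm{MULTIPLY}$ (the number depending on $p,d$ alone), its squared norm $|[\nabla U]\circ F_F|^2$ requires a further $d\cdot r(p-1,d)^2$ multiplications, and the integral over $\hat F$ reduces to a fixed linear combination of stored reference-face moments.

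It remains to attach the geometric prefactor ${\rm diam}_\infty(T)^{d-1}|\partial T|^{-1}$. The change of variables $F_F$ introduces the surface Jacobian $|F|/|\hat F|$, so after pairing with the prefactor each face contributes the weight ${\rm diam}_\infty(T)^{d-1}\,|F|/|\partial T|$ times the reference integral. The ratio $|F|/|\partial T|$ involves square roots of Gram determinants of node differences when $d\geq 2$, which is the one ingredient not already handled in Lemma~\ref{lem:vol}. Since the estimator is specified only up to a uniform multiplicative constant (cf.~the remark after \eqref{eq:equiverr}), I would replace $|F|/|\partial T|$ by its reference-simplex value and absorb the resulting bounded equivalence constant into the overall normalisation of $\rho$; the remaining factor ${\rm diam}_\infty(T)^{d-1}$ is then delivered by $\mathrm{DIAM}$ (Lemma~\ref{lem:diam}) followed by $\mathcal{O}(1)$ multiplications.

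The round-off analysis is then identical to the telescoping-product argument at the end of the proof of Lemma~\ref{lem:vol}: only $\mathcal{O}(1)$ instances of $\mathrm{MULTIPLY}$ appear (with constants depending solely on $p$ and $d$), each contributes error $\lesssim 4^{-n}$, and induction along the chain of multiplications yields the $C\,2^{-n}$ bound provided all intermediate magnitudes stay in $[-2^{n-1},2^{n-1}]$. This is ensured as long as the coefficients of $[\nabla U]$ and the node coordinates lie in $[-2^{\alpha n},2^{\alpha n}]$ for a sufficiently small $\alpha<1$ depending only on $p$ and $d$. The main obstacle, exactly as in Lemma~\ref{lem:vol}, is magnitude bookkeeping through the composed multiplications after the polynomial pull-back $F_F$ — cleanly handled by the same inductive bound used there.
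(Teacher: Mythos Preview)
Your proposal is correct and is essentially a detailed expansion of the paper's one-line proof, which simply says the argument ``works analogously to that of Lemma~\ref{lem:vol}, with the difference that we have to include the data on the patch of $T$ to compute $[\nabla U]$.''

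Two minor remarks. First, your $\mathrm{IF}$-based neighbour identification is an unnecessary detour: the paper's input encoding already orders the patch data by face. As the paper's own proof of this lemma observes, the input consists of the $d+1$ nodes of $T$ together with one additional node per neighbour (since a face-sharing neighbour is determined by $T$'s face plus a single extra vertex), so the $k$-th extra node and the $k$-th block of polynomial coefficients correspond to the neighbour across the face opposite vertex $z_k$. No comparison logic is needed. Second, your observation about $|F|/|\partial T|$ involving square roots is well taken; replacing it by a fixed constant and absorbing the shape-regularity factor into the equivalence~\eqref{eq:equiverr} is exactly in the spirit of the paper's remark that ``a uniform multiplicative factor does not make any difference,'' and is implicitly what the paper is doing as well.
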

\begin{proof}
As input, we have $d+1$ nodes determining the shape of T, and another $d+1$ elements in the patch which are determined by another $d+1$ points. So $(2d+2)d$ scalar variables for the nodes and $(d+2)r(p,d)$ for the polynomial coefficients of $U$, which results in $x\in\R^{2(d+1)d +(d+2)r(p,d)}$.
The proof works analogously to that of Lemma~\ref{lem:vol}, with the difference 
that we have to include the data on the patch of $T$ to compute $[\nabla U]$.
\end{proof}

\begin{theorem}\label{thm:estimate}
There exists a {basic} RNN ${\rm ESTIMATOR}$ which takes a vector-valued input sequence 
$\bx\in\R^{(2(d+1)d +(d+3)r(p,d))\times  \#\TT}$ such that $x_i$ contains: The nodes 
of the elements $T'\in\omega_{T_i}$ for $T_i\in\TT$ and the corresponding 
polynomial expansions of $U_{T'}$ and $f|_{T_i}$. The output $\by:= {\rm 
ESTIMATOR}(\bx)$ satisfies
\begin{align*}
 |y_i-\rho_{T_i}(\TT,U,f)^2|\leq C 2^{-n}
\end{align*}
in case $n\gtrsim \max(\log(\bx))$ for a uniform hidden constant. The RNN {\rm 
ESTIMATOR} has a fixed number of independent weights but width and depth proportional to 
$n$, so a total number of weights behaving like $O(n^3)$. 

\end{theorem}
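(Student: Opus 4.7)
The plan is to construct ${\rm ESTIMATOR}$ by composing the building blocks ${\rm DIAM}$, ${\rm VOL}$, ${\rm JUMP}$, and ${\rm MULTIPLY}$ already established in Lemmas~\ref{lem:diam}--\ref{lem:jump} and Proposition~\ref{prop:multiply}, applied element-wise along the input sequence. Concretely, for each position $i$, I would first extract from $x_i$ the nodes of $T_i$ and feed them into ${\rm DIAM}$ to get (an exact representation of) ${\rm diam}_\infty(T_i)$; simultaneously apply ${\rm VOL}$ to the nodes of $T_i$ together with the polynomial coefficients of $U|_{T_i}$ and $f|_{T_i}$ to approximate ${\rm diam}_\infty(T_i)^{d}|T_i|^{-1}\norm{f+\Delta U}{L^2(T_i)}^2$, and apply ${\rm JUMP}$ to the full patch data of $\omega_{T_i}$ to approximate ${\rm diam}_\infty(T_i)^{d-1}|\partial T_i|^{-1}\norm{[\nabla U]}{L^2(\partial T_i)}^2$.

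Next, following the equivalent definition~\eqref{eq:equiverr}, I would use ${\rm MULTIPLY}$ (and, where convenient, ${\rm SQUARE}$) a constant number of times to scale the ${\rm VOL}$ output by ${\rm diam}_\infty(T_i)^2$ and the ${\rm JUMP}$ output by ${\rm diam}_\infty(T_i)$, and then add the two results through a trivial linear layer to obtain $y_i \approx \rho_{T_i}^2$. Since the entire computation for position $i$ uses only data encoded in $x_i$ (the patch $\omega_{T_i}$ is packaged there), the construction is \emph{locally uniform}: the same fixed sub-network is applied at every position in the sequence, which is exactly the structure of a basic RNN in which the recurrent hidden state is unused. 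Stacking the simultaneous applications of the building blocks gives a finite composition of networks with a fixed number of independent weights, so the overall network has a fixed number of independent weights, while the total parameter count is $O(n^3)$ (dominated by the ${\rm SQUARE}/{\rm MULTIPLY}$ blocks).

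For the error analysis, each application of ${\rm MULTIPLY}$ introduces an error $\lesssim 4^{-n}$ provided its inputs lie in $[-2^{n-1},2^{n-1}]$; the same holds for ${\rm VOL}$ and ${\rm JUMP}$ once their inputs lie in $[-2^{\alpha n},2^{\alpha n}]$. Since only a constant number (depending on $d$ and $p$) of such operations are performed per element, a telescoping induction of the form used at the end of the proof of Lemma~\ref{lem:vol} shows that the total error at each output position is bounded by $C 2^{-n}$ with a uniform constant.

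The main obstacle, as usual in these constructions, is the \emph{bookkeeping of magnitudes}: the coefficients of $\Delta U$, of $f+\Delta U$, and of the jumps $[\nabla U]$ that appear as intermediate quantities are polynomial expressions of the raw inputs (nodes and polynomial coefficients of $U,f$), so their size is bounded by a fixed-degree polynomial in $\norm{\bx}{\infty}$. The assumption $n\gtrsim \log\norm{\bx}{\infty}$ is precisely what is needed to guarantee that these intermediate quantities stay within the admissible ranges of the building blocks. Once this is verified, the desired bound $|y_i-\rho_{T_i}^2|\leq C 2^{-n}$ follows by straightforward composition, and no genuinely new ideas are required beyond those developed in the preceding sections.
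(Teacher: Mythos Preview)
Your proposal is correct and follows essentially the same approach as the paper: assemble the element-wise DNN ${\rm EST}$ from ${\rm DIAM}$, ${\rm VOL}$, ${\rm JUMP}$, and a constant number of ${\rm MULTIPLY}$ calls (interpreted as DNNs of width and depth $\mathcal{O}(n)$), then apply it uniformly along the $\#\TT$-length sequence as a basic RNN with unused recurrence. The paper adds only one small step you omit, a final clipping $\by\mapsto\max(\by,0)$ to ensure non-negativity of the estimator outputs for the subsequent marking stage.
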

\begin{proof}
 Lemmas~\ref{lem:diam}--\ref{lem:jump} show that there are RNNs computing all 
ingredients for $\rho_T(\TT,u,f)^2$. A fixed number of applications of the RNN {\rm MULTIPLY} 
combine the elements and output an approximation to $\rho_T(\TT,u,f)^2$ up to an 
 accuracy $\lesssim 2^{-n}$ as long as the magnitude of the input is bounded 
by $2^{\alpha n}$ (where $n\in\N$ is the size of the input sequence and $\alpha$ only depends on $d$ and $p$). Interpreting 
the resulting RNN ${\rm EST}$ which computes the approximation to 
$\rho_T(\TT,u,f)^2$ as a DNN, we observe that ${\rm EST}$ is a DNN with depth 
and width $\mathcal{O}(n)$ composed of $n$ copies of the same net. Hence, we 
only have a fixed (accuracy independent) number of independent weights in ${\rm EST}$ 
although the width and depth of ${\rm EST}$ depends on $n$. Moreover, ${\rm EST}$ forms 
the building block for an RNN which takes a vector-valued sequence 
$\bx\in\R^{({2(d+1)d +(d+3)r(p,d)})\times  \#\TT}$  as described in the statement.
 From this, ${\rm EST}$ computes the output sequence $y_i$ which satisfies
 \begin{align*}
  |y_i-\rho_{T_i}(\TT,U,f)^2|\lesssim 2^{-n}.
 \end{align*}
A final application $\by=\max(\by,0)$ guarantees the non-negativity of the estimators and this concludes the proof.
\end{proof}
\subsection{D\"orfler marking}
The marking algorithm is based on the following observation: Assume $x_1,\ldots,x_n\geq 0$. Consider the binary search algorithm
\begin{algorithm}\label{alg:binary}
 \textbf{Input:} $x_1,\ldots,x_n\geq 0$, $0<\theta\leq 1$\\
 Set ${y_1}:= \max_{1\leq i\leq n}x_i/2$. For $\ell={1},\ldots, k $ do:
 \begin{enumerate}
  \item If $\sum_{x_i\geq y_\ell} x_i \geq \theta \sum_{i=1}^n x_i$, set{ $y_{\ell+1} = y_\ell+y_1/2^\ell .$}
   \item If $\sum_{x_i\geq y_\ell} x_i< \theta \sum_{i=1}^n x_i$, set  {$y_{\ell+1}=y_\ell-y_1/2^\ell$.}
    %\item If $\sum_{x_i\geq y_\ell} x_i= \theta \sum_{i=1}^n x_i$, set $y_{\ell+1} = y_\ell$.
 \end{enumerate}

\end{algorithm}

\begin{lemma}\label{lem:binary}
 Assume $x_1,\ldots,x_n\geq 0$. Let $y\geq 0$ be maximal such that $\sum_{x_i\geq y } x_i\geq \theta \sum_{i=1}^n x_i$. Then, there holds $|y-y_\ell|\leq 2^{-\ell}\max_{1\leq i\leq n}x_i$.
\end{lemma}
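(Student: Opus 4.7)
The plan is to prove the claimed inequality $|y-y_\ell| \leq 2^{-\ell}\max_i x_i$ by induction on $\ell$. Setting $M := \max_{1\le i\le n} x_i$ and $f(y) := \sum_{x_i \geq y} x_i$, I would first note that $f$ is a decreasing, left-continuous step function of $y \geq 0$, so the hypothesis identifies $y$ as the largest value at which $f(y) \geq \theta \sum_i x_i$ (this maximum is attained because left-continuity makes the superlevel set of $f$ right-closed).

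For the base case $\ell=1$, the algorithm sets $y_1 = M/2$, while $f(y) = 0$ for every $y > M$ forces $y \in [0, M]$ (the degenerate case $\sum_i x_i = 0$ being trivial since then any $y$ works). Hence $|y - y_1| \leq M/2 = 2^{-1}M$, as required.

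For the inductive step, assuming $|y - y_\ell| \leq 2^{-\ell}M$, I would split according to the branch taken in Algorithm~\ref{alg:binary}. In Case (1), $f(y_\ell) \geq \theta \sum_i x_i$, and the monotonicity of $f$ together with the maximality of $y$ gives $y_\ell \leq y$; the update $y_{\ell+1} = y_\ell + M\,2^{-\ell-1}$ moves the iterate toward $y$ by exactly half of the preceding uncertainty, and a short case distinction on whether $y_{\ell+1} \leq y$ or $y_{\ell+1} > y$ — combined with $y - y_\ell \leq 2^{-\ell}M$ from the inductive hypothesis — yields $|y - y_{\ell+1}| \leq 2^{-\ell-1}M$. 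In Case (2), $f(y_\ell) < \theta \sum_i x_i$, so $y_\ell > y$ strictly, and the symmetric argument applied to $y_{\ell+1} = y_\ell - M\,2^{-\ell-1}$ completes the induction.

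There is no genuine obstacle: this is the standard analysis of bisection with halving step size. The only point that requires any care is verifying, from the monotonicity and left-continuity of $f$, that the sign of $f(y_\ell) - \theta \sum_i x_i$ correctly identifies on which side of the target $y$ the current iterate $y_\ell$ lies; once this is established, the arithmetic is routine and reproduces the claimed geometric contraction of the error.
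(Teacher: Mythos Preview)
Your proposal is correct and follows exactly the approach the paper has in mind; the paper's own proof is merely the one-line assertion that ``the binary search nature of the algorithm immediately guarantees'' the bound, and your induction simply spells out that standard bisection argument in full. The only remark is that your write-up is far more careful than the paper's (your observations on left-continuity of $f$ and the attainment of the maximum $y$ are accurate and justify the sign identification at each step), but there is no difference in substance.
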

\begin{proof}
 The binary search nature of the algorithm immediately guarantees $|y-y_\ell|\leq 2^{-\ell}\max_{1\leq i\leq n}x_i$.
\end{proof}

\begin{theorem}\label{thm:binary}
There exists a deep RNN ${\rm BINARY}$ consisting of $k$ basic RNN's of the same type (up to an fixed size input layer), which takes as input the sequence $\bx=(x_1,\ldots,x_n)\in\R^{1\times n}$ and the output $\by:={\rm BINARY}(\bx)\in\R$ satisfies
$$|y-y_k|\leq 2^{-k} \max_{1\leq i\leq n}x_i,$$
 where $y\geq 0$ is maximal such that
$\sum_{x_i\geq y} x_i \geq \theta \sum_{i=1}^n x_i$. The number of weights of the basic RNNs is bounded by $O(n_{\rm min}^3)$, the number of independent weights is fixed and as the deep RNN consists of $k$ copies of the same basic RNN, the total number of weights is bounded by $O(kn_{\rm min}^3)$, while the number of independent weights stays bounded independently.
\end{theorem}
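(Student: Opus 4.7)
The plan is to realize Algorithm~\ref{alg:binary} directly as a deep RNN, assembled from one preparatory basic RNN followed by $k$ identical copies of a single ``one iteration'' basic RNN. The preparatory block computes, in parallel, a running maximum $z_i = \max(z_{i-1}, x_i)$ and a running sum $s_i = s_{i-1} + x_i$, so that after the final position the state contains $M := \max_i x_i$ and $S := \sum_{i=1}^n x_i$; from these we obtain $y_1 := M/2$ and $h_1 := M/4$ by scalar multiplication. Using the constant-sequence gadget and the ``copy additional input'' construction of Section~\ref{sec:rnn}, we make the quadruple $(x_i, y_\ell, h_\ell, S)$ available at every position as the augmented input for the next block.

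The iteration block $B$ performs one step of the binary search. Iterating through the augmented sequence, it accumulates
\[
T_i = T_{i-1} + {\rm IF}(x_i;\; x_i \geq y_\ell)
\]
using the fixed-size ${\rm IF}$ of Lemma~\ref{lem:if}, so that the final state contains $T_\ell = \sum_{x_i \geq y_\ell} x_i$. A second (outer) application of ${\rm IF}$ computes $\sigma_\ell h_\ell$ with $\sigma_\ell \in \{+1,-1\}$ corresponding to the two cases in Algorithm~\ref{alg:binary}, after which $B$ outputs the updated quadruple $(x_i, y_{\ell+1}, h_{\ell+1}, S)$ with $y_{\ell+1} = y_\ell + \sigma_\ell h_\ell$ and $h_{\ell+1} = h_\ell/2$. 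Concatenating $k$ copies of the same basic RNN $B$ in the sense of Section~\ref{sec:rnn} reproduces exactly the $k$ updates of Algorithm~\ref{alg:binary}; reading off the appropriate coordinate of the final output entry yields $y_{k+1}$, and Lemma~\ref{lem:binary} gives the advertised bound $|y - y_k| \leq 2^{-k} M$ (modulo a harmless re-indexing).

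Every component other than ${\rm IF}$ uses only $O(1)$ weights (additions, fixed scalar multiplications, and identity layers), so each basic block $B$ inherits the ${\rm IF}$ complexity, namely $O(n_{\rm min}^3)$ total weights and $O(1)$ independent weights; the $k$-fold stack therefore has $O(k\,n_{\rm min}^3)$ total weights while the number of independent weights remains bounded. The main technical obstacle is to certify the separation hypothesis $|b-c| \geq 2^{-n_{\rm min}} |a|$ of Lemma~\ref{lem:if} at each invocation. For the per-element tests $(a,b,c) = (x_i, x_i, y_\ell)$ we have $\max(|b|,|c|) \geq |a|$, so Assumption~\ref{roundoff} gives the hypothesis directly. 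For the aggregate test $(a,b,c) = (h_\ell, T_\ell, \theta S)$ we note that $h_\ell \leq M/4 \leq S/4$ and $\theta S$ is of the same order as the nonzero $T_\ell$, so the hypothesis holds (if necessary after slightly increasing $\tilde n$ by a constant depending on $\theta$); in the degenerate case $T_\ell = \theta S$ either branch of Algorithm~\ref{alg:binary} produces a valid iterate, so the output of $B$ is still consistent with the binary-search analysis of Lemma~\ref{lem:binary}.
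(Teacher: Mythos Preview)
Your proposal is correct and follows essentially the same route as the paper: a preparatory basic RNN computing $\max_i x_i$ (and the initial pivot and step), followed by $k$ identical iteration blocks, each of which first accumulates $T_\ell=\sum_{x_i\geq y_\ell}x_i$ via the ${\rm IF}$ of Lemma~\ref{lem:if} (the paper packages this as the auxiliary RNN ${\rm SUMY}$) and then updates $(y_\ell,h_\ell)\mapsto(y_{\ell+1},h_{\ell+1})$ by a second ${\rm IF}$ comparing $T_\ell$ with $\theta S$. The only minor difference is in the verification of the ${\rm IF}$ hypothesis for the aggregate comparison: the paper simply observes that $T_\ell\geq\max_i x_i\geq h_\ell$ (since $y_\ell\leq\max_i x_i$ forces the maximum element into the sum) and then invokes Assumption~\ref{roundoff} on the computed quantities, which is cleaner than your ``same order'' argument and avoids the need to adjust $\tilde n$ by a $\theta$-dependent constant.
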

\begin{proof}
%Let $\delta=\min_{x_i\neq x_j}|x_i-x_j|>0$. By Assumption~\ref{roundoff}, there holds $\delta\geq 2^{-n_{\rm min}} \min_{1\leq  i\leq n}x_i$.

We may construct a basic RNN ${\rm SUMY}$ with inputs $(x_1,\ldots,x_n)$ and $y$ via
\begin{align*}
 z_i={\rm SUMY}(x_i,y,z_{i-1}):= (z_{i-1} +{\rm IF}(x_i;x_i\geq y)) %+ x_i-{\rm IF}(y-x_i,x_i)).
\end{align*}
Lemma~\ref{lem:if} shows that ${\rm IF}$ computes the exact cut-off function, as we assume $x_i$ to satisfy Assumption~\ref{roundoff}. This shows
\begin{align*}
 z_n= \sum_{x_i\geq y} x_i
\end{align*}
and the number of weights of ${\rm SUMY}$ behaves like $O(n_{\rm min}^3)$.
A basic RNN gives the initial values $y_1:= \max_{1\leq i\leq n}x_i/2$, $z_1:=y_1/2$ and we construct the basic RNN that performs one iteration of Algorithm~\ref{alg:binary}.  One iteration of Algorithm~\ref{alg:binary} corresponds to
\begin{align*}
 \binom{\by_{\ell+1}}{\bz_{\ell+1}}&=\binom{\by_\ell+ {\rm IF}(\bz_\ell ; {\rm SUMY}(\bx,y_{\ell,n})\geq\theta {\rm SUMY}(\bx,0))
 %{\rm IF}({\rm SUMY}(\bx,y_\ell)-\theta {\rm SUMY}(\bx,0),y_\ell/2) \\
 - {\rm IF}(\bz_\ell ; {\rm SUMY}(\bx,y_{\ell,n})<\theta {\rm SUMY}(\bx,0))}{\bz_\ell/2 }. 
  %+{\rm IF}(-{\rm SUMY}(\bx,y_\ell)+\theta {\rm SUMY}(\bx,0),(\max_{1\leq i\leq n}x_i-y_\ell)/2).
\end{align*}
It is $\bz_\ell= \by_1/2^\ell$ and $y_{\ell,n}$ (which is the last entry of $\by_\ell$) contains the current pivot. We assume that ${\rm SUMY}(\bx,y_{\ell,n})$, $\theta {\rm SUMY}(\bx,0)$ satisfy Assumption~\ref{roundoff} and since ${\rm SUMY}(\bx,y_\ell)\geq \max_{1\leq i\leq n}x_i\geq \bz_\ell $,
%since ${\rm SUMY}(\bx,y_\ell)\geq y_\ell$ and $|{\rm SUMY}(\bx,y_\ell)-\theta {\rm SUMY}(\bx,0)|\geq 2^{-n_{\rm min}} {\rm SUMY}(\bx,y_\ell)$ by Assumption~\ref{roundoff}, 
the cut-off functions are computed exactly. % as long as the accuracy of ${\rm IF}$ is sufficiently large depending on $n_{\rm min}$. 
Remark that one mapping $(y_l,z_l)\mapsto (y_{l+1},z_{l+1})$ corresponds to an application of a DNN of size $O(n_{\rm min}^3)$ to ${\rm SUMY}(\bx,y_\ell)$ and ${\rm SUMY}(\bx,0)$. 
This can be
constructed as follows (see also Figure~\ref{fig:RNNBinary}). A basic RNN of comparable size to ${\rm SUMY}$ takes the input $(\bx, y_{\ell,n})$ and computes the
output $\widetilde \bx$ with $\widetilde x_i
:= ({\rm SUMY}(\bx, y_{\ell,n}), {\rm SUMY}(\bx, 0))$ for all $1 \leq  i \leq  n$. Then a basic RNN containing the DNN
IF is applied to $\widetilde \bx$ to compute the output $\by_{\ell+1}$ and $\bz_{\ell+1}$. This concludes the proof.

\end{proof}

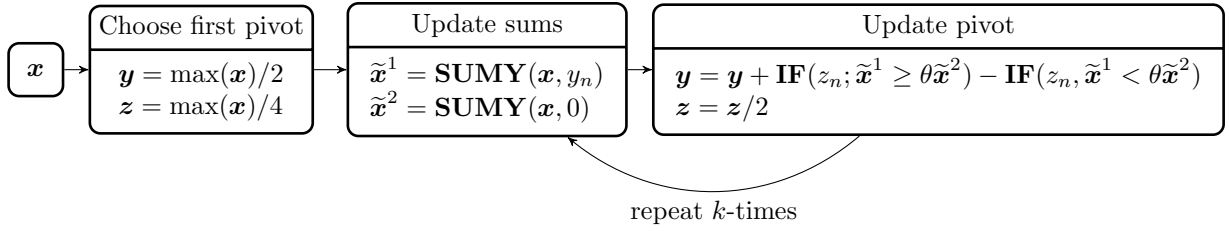
\begin{figure}

	\begin{tikzpicture}[->,>=stealth']
	
	% Position of QUERY 
	% Use previously defined 'state' as layout (see above)
	% use tabular for content to get columns/rows
	% parbox to limit width of the listing

	% STATE QUERYREP
	\node[state] (INPUT) 
	{%
		\begin{tabular}{l}
		$\bx$
		\end{tabular}
	};
	\node[block,right of=INPUT,xshift = 1.2cm] (BINARY){Choose first pivot\nodepart{two} 
	\begin{tabular}{l}
		$\by=\max(\bx)/2$\\ 
		$\bz=\max(\bx)/4$
		\end{tabular}};
	\node[block,right of=BINARY,xshift = 2.8cm] (ROUND) {Update sums\nodepart{two}
	\begin{tabular}{l}
		$\widetilde\bx^1 = {\bf SUMY}(\bx,y_n)$\\
		$\widetilde \bx^2 = {\bf SUMY}(\bx,0)$
		\end{tabular}};
	\node[block,right of=ROUND,xshift = 5cm] (TMP1) {Update pivot\nodepart{two}\begin{tabular}{l}
		$\by=\by+{\bf IF}(z_n;\widetilde\bx^1\geq \theta\widetilde\bx^2)-{\bf IF}(z_n,\widetilde\bx^1<\theta \widetilde\bx^2)$\\
		$\bz = \bz/2$
		\end{tabular}};
	
	% draw the paths and and print some Text below/above the graph
	\path (INPUT) edge  node[anchor=south,above]{} (BINARY)
	(BINARY) edge  node[anchor=south,above]{} (ROUND)
	(ROUND) edge  node[anchor=south,above]{} (TMP1)
	(TMP1) edge[bend left =40] node[anchor=south,below]{repeat $k$-times} (ROUND)
	;	%node[anchor=north,below]{$RN_{16}$} (ACK)
	%(INPUT);%     	edge[bend right=20] node[anchor=south,above]{$SC_n\neq 0$} (NONZERO);
	%(ACK)       	edge                                                     
%(EPC)
	%(EPC)       	edge[bend left]                                          
%(QUERYREP)
	%(QUERYREP)  	edge[loop below]    node[anchor=north,below]{$SC_n\neq 
%0$} (QUERYREP)
	%(QUERYREP)  	edge                node[anchor=left,right]{$SC_n = 0$} 
%(ACK);
	
	\end{tikzpicture}
	\caption{The structure of the RNN ${\rm BINARY}$ from Theorem~\ref{thm:mark}. Variables which are not used in a particular block are copied to the output sequence. The RNN $\by=\max(\bx)$ is defined by $y_{i}=\max(y_{i-1},x_i)$ and computes $y_i=\max_{1\leq j\leq i}x_j$.}
	\label{fig:RNNBinary}
	
\end{figure}
%\begin{lemma}\label{lem:round}
% There exists a basic RNN ${\rm ROUND}$ which takes as input a sequence $x_1,\ldots,x_n$ as well as a number $y$ and outputs the number $x_i$ such that $|x_i-y|$ is minimal.
%\end{lemma}
%\begin{proof}
% We construct the RNN by first copying $y$ to the whole sequence and then feed the sequence
% \begin{align*}
% \bx:= \begin{pmatrix}
%   x_1 &\ldots & x_n\\
%   y &\ldots & y
%  \end{pmatrix}
% \end{align*}
%into
% \begin{align*}
%  z_i = {\rm ROUND}(x_i,y_i):= {\rm IF}(x_i; |z_{i-1}-y_i|>|x_i-y_i|) + {\rm IF}(z_{i-1};|z_{i-1}-y_i|\leq|x_i-y_i|).
% \end{align*}
%Under Assumption~\ref{roundoff}, there holds $||z_{i-1}-y_i|-|x_i-y_i||\geq 2^{-2n_{\rm min}} \max(x_i,z_{i-1})$ or $|z_{i-1}-y_i|-|x_i-y_i|=0$. Hence, ${\rm IF}$ with sufficient accuracy depending only on $n_{\rm min}$ computes the correct cut-off function. Remark that the RNN can be written as basic RNN, by leaving out the copying step and therefore using an additional output variable in the second RNN which stores $y$. This concludes the proof. 
%\end{proof}

\begin{lemma}\label{lem:round}
There exists a basic RNN ${\rm ROUND}$ which takes as input a non-negative sequence $\bx=(x_1,\ldots,x_n)\in \R^{1\times n}$ and values $\overline{x}>\underline{x}\geq0$ and outputs a sequence $\by=(y_1,\ldots,y_n):={\rm ROUND}(\bx)\in\R^{1\times n}$ which satisfies
 \begin{align*}
   y_i= \begin{cases}
\overline{x} , \quad x_i\in[ \underline x,\overline x]\\
x_i, \quad \textup{else}.
\end{cases}
 \end{align*}
 The number of weights behaves like $O(n_{\rm min}^3)$, while the number of independent weights stays bounded. 
\end{lemma}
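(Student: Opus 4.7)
The plan is to realize ROUND by expressing the piecewise-defined function pointwise via two applications of the ${\rm IF}$ building block from Lemma~\ref{lem:if}. Specifically, I would use the identity
\begin{align*}
 y_i = x_i + {\rm IF}(\overline{x}-x_i;\; x_i \geq \underline{x}) - {\rm IF}(\overline{x}-x_i;\; x_i > \overline{x}).
\end{align*}
A quick case distinction verifies correctness: if $x_i<\underline{x}$, both IF-terms return $0$ and $y_i=x_i$; if $\underline{x}\le x_i\le\overline{x}$, the first IF returns $\overline{x}-x_i$ and the second returns $0$, so $y_i=\overline{x}$; and if $x_i>\overline{x}$, both IF-terms return $\overline{x}-x_i$ and cancel, giving $y_i=x_i$.

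Next, I would package this into a basic RNN. First, prepend a short basic RNN that converts the scalar inputs $\overline{x}$ and $\underline{x}$ (supplied as the first entries of two auxiliary input channels, padded with zeros) into constant sequences, using the trick $y_i=x_i+y_{i-1}$ described in the preliminaries. The resulting augmented sequence at step $i$ carries $(x_i,\overline{x},\underline{x})$. Then, at each step, a single DNN applied to this triple computes $y_i$ via the formula above, where the two ${\rm IF}$ sub-blocks are reinterpreted as DNNs (as explained in Section~\ref{sec:rnn}). Since each ${\rm IF}$-DNN has $O(n_{\rm min}^3)$ weights and only a fixed number of independent weights (by Lemma~\ref{lem:if}), and the remaining additions/subtractions contribute only $O(1)$ weights, the overall complexity is $O(n_{\rm min}^3)$ with a constant number of independent weights.

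The one step requiring care is verifying that the inputs to ${\rm IF}$ respect the hypothesis of Lemma~\ref{lem:if}: for the call ${\rm IF}(\overline{x}-x_i;\; x_i\;\square\;c)$ with $c\in\{\underline{x},\overline{x}\}$, we need $|x_i-c|\ge 2^{-n_{\rm min}}|\overline{x}-x_i|$ whenever $x_i\neq c$. Under Assumption~\ref{roundoff}, this holds automatically provided $\max(|x_i|,|c|)\ge|\overline{x}-x_i|$, which is satisfied because $\overline{x}\ge x_i\ge 0$ implies $|\overline{x}-x_i|\le\overline{x}=|c|$ in the relevant regime (and is trivially handled when $x_i>\overline{x}$, where $|x_i-\overline{x}|\le|x_i|$). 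Thus $\tilde n=n_{\rm min}$ suffices and ${\rm IF}$ yields the exact step, so the constructed basic RNN satisfies the claim exactly (not only up to tolerance).

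The main obstacle is the verification that Assumption~\ref{roundoff} applies to the inputs passed to the two ${\rm IF}$ subroutines, i.e., that the first argument $a=\overline{x}-x_i$ is indeed bounded by $\max(|b|,|c|)$ in magnitude so that the universal choice $\tilde n=n_{\rm min}$ is valid; all remaining verifications are essentially the case distinction above combined with standard composition of DNNs and basic RNNs.
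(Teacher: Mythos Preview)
Your algebraic identity is correct, but the verification that the ${\rm IF}$ hypothesis from Lemma~\ref{lem:if} is met contains an error that makes the construction fail as stated. For the first call ${\rm IF}(\overline{x}-x_i;\, x_i \geq \underline{x})$ one has $a=\overline{x}-x_i$, $b=x_i$, and $c=\underline{x}$ (not $\overline{x}$). The required condition $\max(|b|,|c|)\geq |a|$ therefore reads $\max(x_i,\underline{x})\geq |\overline{x}-x_i|$, and this is \emph{not} guaranteed: take for instance $x_i=0$, $\underline{x}=1$, $\overline{x}=10$, where the left side is $1$ and the right side is $10$. In this regime the underlying $\widehat{\rm IF}$ network may output an intermediate value rather than the exact step, so the claimed exactness with $\tilde n=n_{\rm min}$ breaks down. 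More generally, with your choice of $a$ one would need $\tilde n \gtrsim n_{\rm min}+\log_2(\overline{x}/\underline{x})$, which is unbounded since the lemma allows $\underline{x}$ arbitrarily close to $0$; hence the stated $O(n_{\rm min}^3)$ weight bound is not obtained.

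The paper's construction sidesteps exactly this issue by two devices: it takes $a=\overline{x}$ (a fixed nonnegative constant, not $\overline{x}-x_i$) so that $|a|=\overline{x}$, and it writes $y_i=\max(x_i,\overline x_i)$ with
\[
\overline x_i=\overline{x}-{\rm IF}(\overline{x};\,x_i>\overline{x})-{\rm IF}(\overline{x};\,x_i+\overline{x}<\underline{x}+\overline{x}),
\]
where the comparison in the second ${\rm IF}$ is shifted by $\overline{x}$ precisely so that $\max(|b|,|c|)\geq \underline{x}+\overline{x}\geq \overline{x}=|a|$ holds automatically. Your formula can be repaired in the same spirit (e.g.\ replace the first call by ${\rm IF}(\overline{x}-x_i;\,x_i+\overline{x}\geq \underline{x}+\overline{x})$ and check $\max(x_i+\overline{x},\underline{x}+\overline{x})\geq |\overline{x}-x_i|$), but as written the argument has a genuine gap at the step you yourself flagged as the main obstacle.
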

\begin{proof}
The RNN can be constructed as the component-wise maximum of the sequences $\bx$ and $\overline \bx$, where 
\begin{align*}
\overline x_i:= \begin{cases}
\overline{x} , \quad x_i\in[\underline x,\overline x]\\
0, \quad \textup{else},
\end{cases} 
= \overline{ x} - {\rm IF} (\overline{x}; x_i>\overline{x}) - {\rm IF} (\overline{x}; x_i+\overline{x}<\underline{x}+\overline{x}).
\end{align*}
The ${\rm IF}$ are interpreted as DNN's of size $O(n_{\rm min}^3)$, and compute the expected output, as we assume $x_i,$ $\overline{x}$, $\underline{x}$ to satisfy Assumption \ref{roundoff} and it holds $\max(|x_i+\overline{x}|,|\underline{x}+\overline{x}|\geq |\underline{x}+\overline{x}|) \geq \overline{x}$. 
\end{proof}
%\begin{remark}
%If one does not feel comfortable with the insertion of $\overline{x}$, one should reformulate this lemma with an condition $\overline{x}\le C\underline{x}$, and this can then be applied if we add on the whole sequence $z_l$ beforehand...
%\end{remark}

\begin{theorem}\label{thm:mark}
 There exists a deep RNN ${\rm MARK}$ which takes as input a non-negative sequence $\bx=(x_1,\ldots,x_n)\in \R^{1\times n},$ and outputs a sequence $\by=(y_1,\ldots,y_n):={\rm MARK}(\bx)\in\R^{1\times n}$ which satisfies
 \begin{align*}
  \sum_{i=1\atop y_i> 0}^n \tilde x_i\geq \theta \sum_{i=1}^n \tilde x_i
 \end{align*}
such that the number of terms in the left-hand side sum is minimal and  $\tilde \bx$ satisfies $|x_i-\tilde x_i|\le \eps/n.$. The deep RNN consists of a fixed layer of basic RNN's, followed by $ k\simeq \log_2(\max_{1\leq i\leq n}x_i) + |\log_2(\eps/n)|+1$ copies of the same basic RNN and ends with an output layer of a fixed number of basic RNN's. The overall number of weights therefore behaves like $O(k n_{\rm min}^3),$ while the number of independent weights stays bounded.
\end{theorem}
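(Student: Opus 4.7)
The plan is to assemble ${\rm MARK}$ as a three-block composition matching the architecture claimed in the statement. A fixed-size preprocessing stack first precomputes the auxiliaries $M := \max_i x_i$ (by a max-running basic RNN as in Figure~\ref{fig:RNNBinary}) and the total sum $S := \sum_i x_i$ (as in Section~\ref{sec:rnn}), broadcasting each as a constant sequence. I then invoke ${\rm BINARY}$ from Theorem~\ref{thm:binary} unchanged: it is $k$ identical basic RNN iterations and returns an approximate pivot $y_k$ with $|y - y_k| \leq 2^{-k} M$, where $y$ is the maximal threshold still satisfying the D\"orfler condition for $\bx$. Choosing $k \simeq \log_2 M + |\log_2(\eps/n)| + 1$ forces $|y - y_k| \leq \eps/(2n)$, which is the resolution needed to keep the perturbation $|x_i - \tilde x_i|$ within the allowed budget $\eps/n$.

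The fixed-size postprocessing stack then turns the pivot into the marking indicator. I first apply ${\rm ROUND}$ from Lemma~\ref{lem:round} with $\underline x := y_k - \eps/(2n)$ and $\overline x := y_k + \eps/(2n)$, producing $\tilde x_i = \overline x$ whenever $x_i$ lies in the window and $\tilde x_i = x_i$ otherwise; in particular $|\tilde x_i - x_i| \leq \eps/n$ and the exact threshold $y$ is absorbed into the rounded bin. After rounding the entries of $\tilde \bx$ split cleanly into mandatory (strictly above $\overline x$), discretionary (equal to $\overline x$), and excluded (strictly below). A short summation RNN computes $\tilde S := \sum_j \tilde x_j$, and then a basic RNN carries out a greedy marking sweep: walking through the sequence once, it flags every mandatory entry, accumulates the running sum $r_i$ of flagged values, and, for each discretionary entry, flags it exactly when the test $r_{i-1} < \theta \tilde S$ returns true via ${\rm IF}$. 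A short D\"orfler-style verification using $\{\tilde x_i < \overline x\} \subseteq \{x_i < y\}$ and the monotonicity of the rounding yields $(1-\theta)\sum_{\tilde x_i \geq \overline x}\tilde x_i \geq \theta \sum_{\tilde x_i < \overline x}\tilde x_i$, so the sweep necessarily terminates with the D\"orfler inequality satisfied.

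The main obstacle is the minimal-cardinality requirement in the presence of threshold ties: a smooth fixed-size network cannot unambiguously select how many tied entries to include. The rounding step collapses all near-threshold values to the single scalar $\overline x$ and thereby reduces the task to a one-dimensional greedy selection, which is naturally handled by the recurrent state of a basic RNN. Minimality then follows because mandatory entries are forced into every feasible marking (replacing any by a smaller entry strictly lowers the sum) and discretionary entries are added one-by-one only until the inequality first holds. The remaining checks are routine: every pair of numbers fed to ${\rm IF}$ satisfies Assumption~\ref{roundoff}, since the only comparisons involve $\tilde x_i$ against $\overline x$ and $r_{i-1}$ against $\theta \tilde S$, both bounded below in terms of $\eps/n$; and the weight budget is $O(n_{\min}^3) + O(k\, n_{\min}^3) + O(n_{\min}^3) = O(k\, n_{\min}^3)$, with the number of independent weights bounded throughout because each building block was constructed with that property.
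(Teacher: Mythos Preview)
Your overall architecture---preprocessing, $k$ iterations of ${\rm BINARY}$, applying ${\rm ROUND}$ with a window guaranteed to contain the exact threshold, then greedily selecting among the tied discretionary entries---is exactly the paper's construction. There is, however, a genuine gap in your greedy marking sweep. You process the sequence in a single pass and test $r_{i-1} < \theta\tilde S$ for each discretionary entry, where $r_{i-1}$ is the running sum of values flagged \emph{so far}. If discretionary entries precede some mandatory ones in sequence order, the running sum is artificially low at the moment you decide on them, and you over-mark. Concretely, take $\tilde\bx=(\overline x,\overline x,5,5)$ with $\overline x=2$ and $\theta\tilde S=11$: your sweep flags all four entries, but three suffice. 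Minimality fails.

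The paper avoids this by inserting one extra fixed-size basic RNN layer that precomputes the \emph{full} mandatory sum $\sum_{\tilde x_i>\overline x}\tilde x_i$ before the selection pass; the test for the $i$-th discretionary entry then reads
\[
\sum_{j<i}\widehat x_j \;+\; \sum_{\tilde x_i>\overline x}\tilde x_i \;<\; \theta\sum_i \tilde x_i,
\]
where $\widehat x_j$ is nonzero only at discretionary positions. This decouples the discretionary count from the sequence ordering and restores minimality. A second, smaller point: you take the ${\rm ROUND}$ window half-width to be $\eps/(2n)$, but $n$ is the input length and you do not say how the network obtains this number. The paper sidesteps this by using $2z_k$, which is produced directly by ${\rm BINARY}$ and automatically satisfies the required bound.
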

\begin{proof}
See also Figure~\ref{fig:RNNMark} for the following construction:
As a first layer, we have the deep RNN ${\rm BINARY}$ with $k\simeq  \log_2(\max_{1\leq i\leq n}x_i) +  |\log_2(\eps/n)|+1$ repetitions. This produces the pivot $y_k$ from Algorithm~\ref{alg:binary} with  $|y_k-y|\le 2z_k\le \eps_{\rm tol}/(2n)$ and with $y\geq 0$ maximal such that $\sum_{x_i\geq y } x_i \geq  \theta \sum_{i=1}^n x_i$. Now, for $z_k$ from ${\rm BINARY}$, it holds $y\in[y_k-2z_k,y_k+2z_k]$ and an application of ${\rm ROUND}$ gives the sequence $\tilde \bx$ for $\underline x:=y_k-2z_k$, $\overline x:=y_k+2z_k$ with the stated error bound, as $2z_k\le \eps/(2n)$. %Or: This already shows, as $y\geq \eps/n$, that $y=y_k$ due to Assumption~\ref{roundoff}. ? dann reicht oben +1? $y=x_i$ for an $i$, so $y$ satisfies Assumption~\ref{roundoff}, and also $y_k$ satisfies it, because we assume that?  
We now set $\overline y:= \overline x$ and it holds that $\overline y$ is maximal such that $\sum_{\tilde x_i\geq \overline y} \tilde x_i \geq \theta \sum_{i=1}^n \tilde x_i.$ This is because it holds 
\begin{align*}
\sum_{\tilde x_i>\overline y}\tilde x_i= \sum_{\tilde x_i>\overline y}  x_i \le \sum_{x_i > y} x_i <\theta \sum x_i\le \theta\sum{\tilde x_i}
\end{align*}
and 
\begin{align*}
\sum_{\tilde x_i\geq\overline y}\tilde x_i = \sum_{\tilde x_i\geq\overline y} x_i + \sum_{\tilde x_i=\overline y} (\tilde x_i-x_i) \geq \sum_{ x_i\geq  y} x_i + \theta \sum_{\tilde x_i=\overline y} (\tilde x_i-x_i) \geq \theta \sum x_i + \theta \sum_{\tilde x_i=\overline y} (\tilde x_i-x_i) = \theta \sum \tilde x_i,
\end{align*}
where we used that $x_i\geq y$ implies $\tilde x_i\geq \overline{y}$.
Hence we found the exact cutoff $\overline y$ for the sequence $\tilde \bx$ and proceed with this new sequence.
We generate a preliminary output sequence $\widetilde y_i:=\max(\tilde x_i-\overline y,0)$. The final output $y$ is positive whenever $\widetilde y_i>0$ and additionally on a few entries with $\tilde x_i=\overline y$. To find those entries, compute the sequence
 \begin{align*}
 \widehat x_i = \overline y-{\rm IF}(\overline y;\overline y>\tilde x_i)-{\rm IF}(\overline y;\overline y<\tilde x_i)
\end{align*}
such that $(\widehat x_1,\ldots,\widehat x_n)$ is zero
unless $\widehat x_i=\tilde x_i=\overline y$. The computations of ${\rm IF}$ are exact as before, and the complexity of this basic RNN is of $O(n_{{\rm min}}^3)$ with $n$ repetitions.
Next, we compute
 \begin{align*}
 z_i:= {\rm IF}(\overline y; \sum_{j=1}^{i-1}\widehat x_j+\sum_{\tilde x_i>\overline y} \tilde x_i <\theta \sum_{i=1}^n \tilde x_i)
 \end{align*}
There holds $z_i=\overline y$ for all $1\leq i\leq i_0$ and $z_i=0$ else for minimal $i_0$ such that $\sum_{j=1}^{i_0}\widehat x_j+\sum_{\tilde x_i>\overline y} \tilde x_i \geq\theta \sum_{i=1}^n \tilde x_i$. Note that $\sum_{j=1}^{i-1}\widehat x_j$ can be computed beforehand by a basic RNN of size $\mathcal{O}(n_{\rm min}^3)$ as a straightforward modification of ${\rm SUMY}$ in Theorem~\ref{thm:binary}. 
As usual, we use Assumption~\ref{roundoff} to guarantee that the cut-off functions are computed exactly, as we assume the sums to satisfy Assumption~\ref{roundoff}. 
Finally, we generate the desired output with
\begin{align*}
 y_i=\max(\widetilde y_i, \min(z_i,\widehat x_i)).
\end{align*}
This concludes the proof.
\end{proof}

\begin{figure}
	\begin{tikzpicture}[->,>=stealth']
	
	% Position of QUERY 
	% Use previously defined 'state' as layout (see above)
	% use tabular for content to get columns/rows
	% parbox to limit width of the listing

	% STATE QUERYREP
	\node[state] (INPUT) 
	{%
		\begin{tabular}{l}
		$\bx$
		\end{tabular}
	};
	\node[block,right of=INPUT,xshift = 2cm] (BINARY) {Compute pivot $y_k$\nodepart{two}
	\begin{tabular}{l}
		$(y,z)={\bf BINARY}(\bx)$\\
		$\overline{y}=y+2z$\\
		$\underline{y}=y-2z$
		\end{tabular}};
	\node[state,right of=BINARY,xshift = 3.5cm] (ROUND) 
	{\begin{tabular}{l}
		$\widetilde \bx = {\bf ROUND}(\bx,\overline{y},\underline{y})$
		\end{tabular}};
	\node[block,right of=ROUND,xshift = 4.5cm] (TMP1){ Store entries equal to cut-off in $\widehat{\bx}$ \nodepart{two}
	\begin{tabular}{l}
		$\widetilde \by = \max(\widetilde \bx-\overline{y},0)$\\
		$\widehat\bx =\overline{y}-{\bf IF}(\overline{y};\overline{y}>\widetilde \bx)-{\bf IF}(\overline{y};\overline{y}<\widetilde \bx)$
		\end{tabular}};
	\node[block,below of=TMP1,yshift = -1.5cm] (TMP2){ Compute $\sum_{\widetilde x_i>\overline{y}} \widetilde x_i$ and $\sum_{j=0}^{i-1}\widehat x_j$ \nodepart{two}
	\begin{tabular}{l}
		$\widetilde{\boldsymbol{S}}=\widetilde {\rm SUMY}(\widetilde \bx,\overline{y})$\\
		${\boldsymbol{S}}={\rm SUMY}(\widehat \bx,0)-\widehat{\bx}$
		\end{tabular}};
	\node[block,left of=TMP2,xshift = -5cm] (CHOOSE1) {Find minimal $i_0$\nodepart{two}
	\begin{tabular}{l}
		$\bz = {\bf IF}(\overline{y}; \widetilde{S_n}+ \boldsymbol{S}< \theta S_n)$
		\end{tabular}};
	\node[state,left of=CHOOSE1,xshift = -4cm] (OUTPUT) 
	{\begin{tabular}{l}
		$\by = \max(\widetilde \by,\min(\bz,\widehat\bx))$
		\end{tabular}};	
	% draw the paths and and print some Text below/above the graph
	\path (INPUT) edge  node[anchor=south,above]{} (BINARY)
	(BINARY) edge  node[anchor=south,above]{} (ROUND)
	(ROUND) edge  node[anchor=south,above]{} (TMP1)
	(TMP1) edge  node[anchor=south,above]{} (TMP2)
	(TMP2) edge  node[anchor=south,above]{} (CHOOSE1)
	(CHOOSE1) edge  node[anchor=south,above]{} (OUTPUT)
	;	%node[anchor=north,below]{$RN_{16}$} (ACK)
	%(INPUT);%     	edge[bend right=20] node[anchor=south,above]{$SC_n\neq 0$} (NONZERO);
	%(ACK)       	edge                                                     
%(EPC)
	%(EPC)       	edge[bend left]                                          
%(QUERYREP)
	%(QUERYREP)  	edge[loop below]    node[anchor=north,below]{$SC_n\neq 
%0$} (QUERYREP)
	%(QUERYREP)  	edge                node[anchor=left,right]{$SC_n = 0$} 
%(ACK);
	
	\end{tikzpicture}
	\caption{The structure of the RNN ${\rm MARK}$ from Theorem~\ref{thm:mark}. Variables which are not used in a particular block are copied to the output sequence. Non-bold variables are implemented as constant sequences. The RNN $\widetilde{\rm SUMY}$ is a straightforward modification of ${\rm SUMY}$ from Theorem~\ref{thm:binary} by replacing $\geq$ with $>$.}
	\label{fig:RNNMark}
	
\end{figure}
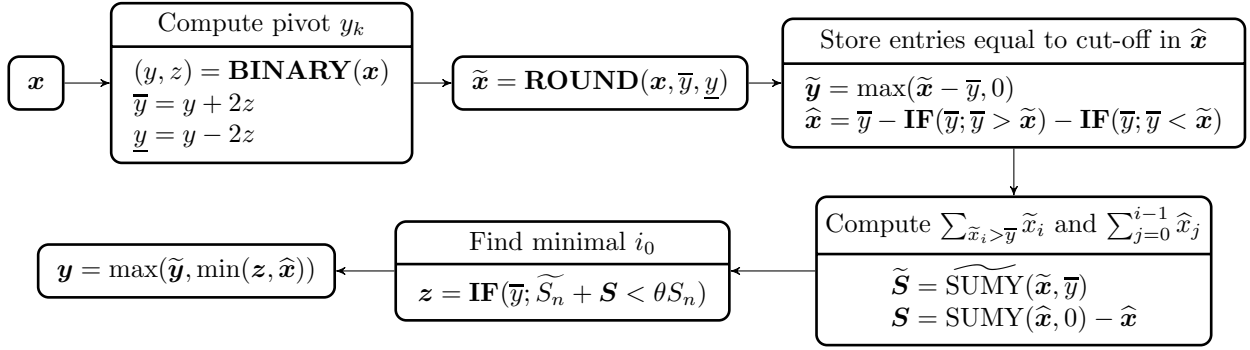

\subsection{Proof of Theorem~\ref{thm:refinement}}\label{sec:complete}
The previous sections already give the necessary ingredients to build the RNN 
{\rm ADAPTIVE} from Theorem~\ref{thm:refinement}.
We use the RNN {\rm ESTIMATOR} with accuracy $n\simeq |\log(\eps/N)|$ from Theorem~\ref{thm:estimate} to compute the error estimator 
$\widetilde \rho_T(\TT,U_\TT,f)$ such that
\begin{align}\label{eq:approxest}
 |\widetilde \rho_T(\TT,U_\TT,f)^2-\rho_T(\TT,U_\TT,f)^2|\lesssim \eps/\#\TT
\end{align}
for all $T\in\TT$ as long as $N\geq \#\TT$ and $|\log(\eps/N)|\gtrsim \log(|x|_\infty)$. This results in a number of weights of $O(|\log(\eps/N)|^3)$.\\ 
Theorem~\ref{thm:mark} provides the deep RNN ${\rm MARK}$ with $n=\#\TT$, which performs the D\"orfler marking and adds an additional error of $\eps/\#\TT$ to the error estimators. 
%as long as $y\geq \eps/\#\TT$. If $y<\eps/\#\TT,$ another application of ${\rm IF}$ (which is not pointed out in Figure 1) can filter out error estimators smaller than the $y$ used in the RNN, until the D\"orfler marking criterion is fulfilled again, which only adds an additional error of $\eps/\#\TT$ to elements which are close to $0$.} In both cases, 
This results in a number of weights of 
\begin{align*}
O\Big(\big(\log_2(\max_{1\leq i\leq \#\TT}\tilde\rho_{T_i}) + |\log_2(\eps/\#\TT)|\big) n_{\rm min}^3\Big).
\end{align*}
All the basic building blocks used in the construction consist of a fixed number of 
independent weights but may have a total number of weights depending on $\log(N)$, $n_{\rm min}^3$ 
and $\log(\eps)$. One of the basic blocks is stacked $\left(\log_2(\max_{1\leq i\leq \#\TT}\tilde\rho_{T_i}) + |\log_2(\eps/\#\TT)|\right)$-times. We ensure the stopping criterion of the algorithm by comparing the sum of the error estimators with the tolerance $\eps_{\rm tol}$. The full structure of ${\rm ADAPTIVE}$ is given in Figure~\ref{fig:RNNADAPTIVE}.

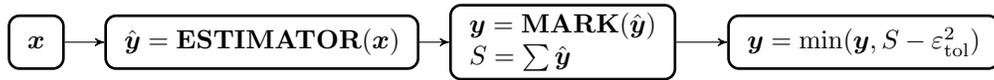
\begin{figure}[h]
	\begin{tikzpicture}[->,>=stealth']	
	
	\node[state] (INPUT) 
	{%
		\begin{tabular}{l}
		$\bx$\\
		\end{tabular}
	};
	\node[state,right of=INPUT,xshift = 2cm] (ESTIMATE) 
	{\begin{tabular}{l}
		$\hat\by = {\bf ESTIMATOR}(\bx)$
		\end{tabular}};
	\node[state,right of=ESTIMATE,xshift = 3cm] (CUT) 
	{\begin{tabular}{l}
		$\by={\bf MARK}(\hat\by)$\\
		$S=\sum \hat\by$
		\end{tabular}};
	\node[state,right of=CUT,xshift = 3cm] (OUTPUT) 
	{\begin{tabular}{l}
		$\by=\min(\by, S-\eps^2_{\rm tol})$
		\end{tabular}};
	\path (INPUT) edge  node[anchor=south,above]{} (ESTIMATE)
	(ESTIMATE) edge node[anchor=south,above]{} (CUT)
	(CUT) edge node[anchor=south,below right]{} (OUTPUT);
	
	\end{tikzpicture}
	\caption{Structure of the RNN {\rm ADAPTIVE}. For a given tolerance $\eps_{\rm tol}^2>0$, the term $S-\eps_{\rm tol}^2$ is non-positive whenever the prescribed tolerance has been reached by the approximate error estimator stored in $S$ and hence terminates the algorithm by setting $\by\leq 0$. }
	\label{fig:RNNADAPTIVE}
	
\end{figure}

\subsection{Proof of Theorem~\ref{thm:if}}\label{sec:if}
The proof of Theorem~\ref{thm:if} requires some preparation. The first result states that for functions in certain weighted $L^\infty$ spaces, the Monte Carlo method overestimates the integral with a positive probability. 
\begin{lemma}\label{lem:EX}
For a given Lipschitz domain $\Omega$ (not necessarily $D$) and a singularity set $S$, let $X\colon \Omega \to \R$ be non-negative with $X\in L^\infty_w$.
%and $p>1$ such that $p/(p-1)<\min_{k=0,\ldots,d-1}\frac{d-k}{d-k-\delta_{\rm reg}}$.
We assume $|\Omega|=1$, and consider $\Omega$ as a probability space and $X$ as a random variable.
There holds
\begin{align*}
 \P(X\geq q\E(X))\geq C^{-1}
 \end{align*}
for all $0<q<1$ and a constant $C>0$ which depends on $q$, $\Omega$, the number of connected components in $S_k$, $k=0,\ldots,d-1$,  $\delta_{\rm reg}$, and an upper bound for $\norm{X}{L^\infty_w(\Omega)}$.
\end{lemma}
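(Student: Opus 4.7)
The plan is to combine a Paley--Zygmund lower bound with a truncation calibrated to the singular geometry of the weight $w$. By scale invariance under $X \mapsto \lambda X$, both $\P(X \geq q\E X)$ and the ratio $\E X / \|X\|_{L^\infty_w(\Omega)}$ are unchanged, so I may normalize $\E X = 1$ and treat $M \geq \|X\|_{L^\infty_w(\Omega)}$ as a fixed upper bound; the case $\E X = 0$ is trivial since then $X\equiv 0$ and $\P(X\geq 0)=1$. Under this normalization, $X(x) \leq M/w(x)$ pointwise.

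The key geometric input is a polynomial level-set bound for $w$, namely
$$|\{x\in\Omega : w(x) < s\}| \;\le\; C_1\, s^{\alpha}, \qquad \alpha := \min_{k:\,a_k>0}\frac{d-k}{a_k} \;>\; 1,$$
where $a_k = \max\{0,d-k-\delta_{\rm reg}\}$. To obtain it I would decompose $\{w<s\} \subset \bigcup_k \{\dist(\cdot,S_k) < s^{1/a_k}\}$ (omitting the indices with $a_k=0$, which contribute nothing), estimate each tubular neighborhood of a $k$-dimensional facet by a constant multiple of $s^{(d-k)/a_k}$, and sum. Since $\delta_{\rm reg}>0$ forces $a_k < d-k$ whenever $a_k>0$, the exponent strictly exceeds $1$; the constant $C_1$ is controlled by the Lipschitz geometry of $\Omega$, by $\delta_{\rm reg}$, and by the number of connected components of each $S_k$. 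Feeding $X \le M/w$ into this yields the tail estimate $\P(X>t) \le C_1 (M/t)^{\alpha}$ for $t\ge M$.

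With the tail bound in hand, I would truncate: set $Y_T := \min(X,T)$ and use layer-cake integration to get $\E X - \E Y_T \le C_1 M^{\alpha} T^{1-\alpha}/(\alpha-1)$. Choosing $T \simeq (M^{\alpha}/\E X)^{1/(\alpha-1)}$ makes this at most $(1-q)\E X / 2$, so $\E Y_T \ge (1+q)\E X / 2$. Setting $q' := 2q/(1+q) \in (q,1)$ gives $q\E X \le q'\E Y_T$, hence $\{Y_T \ge q'\E Y_T\} \subset \{X \ge q\E X\}$. The Paley--Zygmund inequality, combined with $\E Y_T^2 \le T\E Y_T$ (since $0\le Y_T \le T$), then produces
$$\P(X \ge q\E X) \;\ge\; \P(Y_T \ge q'\E Y_T) \;\ge\; (1-q')^{2}\,\frac{\E Y_T}{T} \;\ge\; c\left(\frac{\E X}{M}\right)^{\alpha/(\alpha-1)},$$
which under the normalization $\E X=1$ depends only on the quantities listed in the statement.

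The main obstacle is the geometric level-set bound in the second paragraph: the weight is the minimum of several powers of distances to sets of different dimensions, so one must separate the contributions of each $S_k$, use Lipschitz regularity of $\Omega$ to handle tubular neighborhoods near $\partial\Omega$, and track the dependence on the combinatorial complexity of $S$ explicitly. Once this estimate is established with exponent $\alpha > 1$, the truncation plus Paley--Zygmund step is essentially mechanical.
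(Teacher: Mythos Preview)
Your proof is correct but takes a genuinely different route from the paper's.

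The paper argues directly via H\"older: with $\Omega_\geq := \{X \geq q\E X\}$ one writes $\E X \leq q\E X + \int_{\Omega_\geq} X$ and bounds the last integral by $\|wX\|_{L^p}\,\|w^{-1}\id_{\Omega_\geq}\|_{L^{p'}}$ for $p'=p/(p-1)$ chosen with $p' < \alpha$ (your $\alpha$). The factor $\|w^{-1}\id_{\Omega_\geq}\|_{L^{p'}}$ is then estimated by another H\"older step using $w^{-1}\in L^r$ for some $r\in(p',\alpha)$, yielding a positive power of $|\Omega_\geq|$; the factor $\|wX\|_{L^p}$ is controlled by the interpolation-type bound $\|wX\|_{L^p}^p \leq \E(X)\,\|X\|_{L^\infty_w}^{p-1}$. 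Rearranging gives the lower bound on $|\Omega_\geq|$.

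Your route---level-set bound $|\{w<s\}|\lesssim s^\alpha$, hence a tail estimate $\P(X>t)\lesssim (M/t)^\alpha$, then truncation $Y_T=\min(X,T)$ to recover a usable second moment, and finally Paley--Zygmund---rests on the same geometric input (the $L^r$-integrability of $w^{-1}$ for $r<\alpha$ is equivalent by layer-cake to your level-set bound) but organizes it differently. The paper's argument is shorter and avoids the auxiliary truncated variable; your argument is more modular, makes the tail decay explicit, and produces a concrete exponent $\alpha/(\alpha-1)$ in the final estimate. Both approaches deliver a lower bound of the form $c\,(\E X/\|X\|_{L^\infty_w})^{\beta}$ for some $\beta>0$, so the effective dependence on the data is the same.
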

\begin{proof}
Let $p>1$ such that $p/(p-1)<\min_{k=0,\ldots,d-1}\frac{d-k}{d-k-\delta_{\rm reg}}$. Define $\Omega_\geq :=\set{\omega\in\Omega}{X(\omega)\geq q\E(X)}$ and observe
\begin{align*}
 \E(X)&=\int_{\Omega\setminus\Omega_\geq} X\,d\omega + \int_{\Omega_\geq} X\,d\omega\leq q\E(X) + \norm{w^ {-1}\id_{\Omega_\geq}}{L^{p/(p-1)}(\Omega)}\norm{wX}{L^p(\Omega)},
\end{align*}
where $w$ is the weight of $L^\infty_w$ defined in Section~\ref{sec:main2}.
It remains to estimate $\norm{w^{-1}\id_{\Omega_\geq}}{L^{p/(p-1)}(\Omega)}$. We aim to prove $\norm{w^{-1}}{L^{r}(\Omega)}<\infty$ for $r>1$ such that $r(d-k-\delta_{\rm reg})<d-k$ with $k=0,\ldots,d-1$. To that end, we bound the integrand $|w|^{-r}$ from above by functions of the form $x\mapsto {\rm dist}(E,x)^{-\alpha}$, with $0<\alpha<d-k$ and $E$ denoting a facet of dimension $k$. All functions of this type are in $L^1(\Omega)$ and we obtain
\begin{align*}
  \norm{w^ {-1}\id_{\Omega_\geq}}{L^{p/(p-1)}(\Omega)}\leq  \norm{w^ {-1}}{L^{r}(\Omega)}\norm{\id_{\Omega_\geq}}{L^{r'}(\Omega)}^{(p-1)/p}\lesssim |\Omega_\geq|^{p/(r'(p-1))},
\end{align*}
where $r'=r/(r-p/(p-1))$.
A H\"older inequality shows
\begin{align*}
 \norm{w X}{L^p(\Omega)}^p\leq \E(X)\norm{w^p X^{p-1}}{L^\infty(\Omega)}\lesssim \E(X)\norm{X}{L^\infty_{w}(\Omega)}^{p-1}
\end{align*}
and hence concludes the proof.

\end{proof}
Let $\TT_\infty:=\bigcup_{\TT\in\T}\TT$ denote the set of all possible elements which may appear as refinements of some elements in $\TT_0$.
With an error estimator $\eta(T,V)$, which for now is just a function depending on $T\in\TT_\infty$ and $V\in L^2(\Dom)$,
we base the construction of our deep RNN ${\rm ADAPTIVE}$ on the following greedy algorithm:
\begin{algorithm}\label{alg:greedy}
 Input: Function $V\in H^1(D)$, tolerance $\eps>0$, initial mesh $\TT_0$\\
 For $\ell = 0,1,2,\ldots$ do:
 \begin{itemize}
  \item[(i)] Compute $\eta(T,V)$ for all $T\in\TT_\ell$, if $\eta(T,V)\leq \eps$ for all $T\in\TT_\ell$, stop.
  \item[(ii)] Find $T_0\in\TT_\ell$ with maximal $\eta(T_0,V)$.
  \item[(iii)] Bisect $T_0$ with newest-vertex-bisection to generate $\TT_{\ell+1}$ and goto~(i) (no mesh closure at this point).
 \end{itemize}
Output: In case algorithm terminates at $\ell\in\N$, it produces a mesh $\TT_\eps:=\TT_\ell$ with $\eta(T,V)\leq \eps$ for all $T\in\TT_\eps$.
\end{algorithm}

The following lemma states that Algorithm~\ref{alg:greedy} produces the minimal mesh to satisfy the given  tolerance $\eps$ in the maximum  norm. Since Algorithm~\ref{alg:greedy} does not perform mesh-closure, we define $\T_{\rm nc}\supset \T$ as the set of meshes which can be generated from $\TT_0$ by iterated newest-vertex-bisection without mesh-closure.
\begin{lemma}\label{lem:hopt}
Let $\TT\in\T_{\rm nc}$ denote a mesh with $\max_{T\in\TT}\eta(T,V)\leq \eps$, then $\TT$ is a refinement of $\TT_\eps$ generated by Algorithm~\ref{alg:greedy}. In this case, Algorithm~\ref{alg:greedy} terminates.
\end{lemma}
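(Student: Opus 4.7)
My plan is to prove the statement by induction on the iteration count $\ell$ of Algorithm~\ref{alg:greedy}, with induction hypothesis: the current mesh $\TT_\ell$ is a coarsening of $\TT$, i.e., $\TT$ can be obtained from $\TT_\ell$ by (possibly zero) further newest-vertex bisections without mesh-closure. The base case $\ell=0$ is immediate since $\TT\in\T_{\rm nc}$ is by definition a refinement of $\TT_0$.

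For the inductive step, suppose $\TT_\ell$ is a coarsening of $\TT$ and Algorithm~\ref{alg:greedy} has not yet terminated. Then there is some $T_0\in\TT_\ell$ with $\eta(T_0,V)>\eps$, which the algorithm selects for bisection. Because every element of $\TT$ satisfies $\eta(T,V)\le \eps$ by assumption, $T_0\notin\TT$. Since $\TT$ is a refinement of $\TT_\ell$ (without mesh-closure), the element $T_0$ must be strictly refined in $\TT$, and with newest-vertex bisection without mesh-closure this means that $\TT$ contains the two children produced by bisecting $T_0$ (or further descendants thereof). Hence $\TT_{\ell+1}$, obtained by replacing $T_0$ with its two children, is still a coarsening of $\TT$, closing the induction.

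For termination, observe that each iteration of Algorithm~\ref{alg:greedy} strictly increases the cardinality of the mesh by one (bisection of one simplex yields two). By the inductive claim, $\#\TT_\ell\le \#\TT$ for all iterations, so the algorithm must halt after at most $\#\TT-\#\TT_0$ steps. When it halts at some step $L$, every element of $\TT_\eps:=\TT_L$ satisfies $\eta(T,V)\le \eps$, and by the induction $\TT$ is a refinement of $\TT_\eps$, establishing both claims.

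The main subtlety is the step where I deduce from ``$T_0\in\TT_\ell\setminus\TT$ and $\TT$ refines $\TT_\ell$ in $\T_{\rm nc}$'' that the two newest-vertex-bisection children of $T_0$ appear (at least as ancestors of elements) in $\TT$. Without mesh-closure this is clean: the refinement relation in $\T_{\rm nc}$ is a tree of bisections on each element of $\TT_\ell$, so if $T_0$ is refined at all, its unique next bisection is the newest-vertex bisection into two children, and everything in $\TT$ lying in $T_0$ is obtained from these children by further bisection. I expect this combinatorial fact to be the only non-routine step; everything else is bookkeeping on the induction.
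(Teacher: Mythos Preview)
Your proof is correct and rests on the same key observation as the paper: any element $T_0$ selected in Step~(ii) satisfies $\eta(T_0,V)>\eps$, hence cannot belong to $\TT$, and by the binary tree structure of newest-vertex bisection $\TT$ must contain (descendants of) both children of $T_0$. The paper packages this as a proof by contradiction---assuming $\TT$ is not a refinement of $\TT_\eps$, it finds an element $T'\in\TT$ that was nonetheless picked for refinement, forcing $\eta(T',V)>\eps$---and treats termination separately by taking $\ell$ large enough that $\TT$ cannot refine $\TT_\ell$. Your forward induction is a slightly cleaner organization: it maintains the invariant ``$\TT$ refines $\TT_\ell$'' directly, and termination falls out from the cardinality bound $\#\TT_\ell\le\#\TT$. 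The mathematical content is the same; your version just avoids the case split.
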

\begin{proof}
First, assume that Algorithm~\ref{alg:greedy} terminates and produces some mesh $\TT_\eps$.
 Assume that $\TT$ is not a refinement of $\TT_\eps$.
 By the binary tree structure of newest-vertex-bisection, this implies the existence of a descendant $T\in\TT_\eps\setminus\TT$ of some element $T'\in\TT$. In this case, however, $\eta(T',V)$ must have been picked for refinement in Step~(ii) of Algorithm~\ref{alg:greedy} in some intermediate step $k\in\N$ (otherwise it would not have been refined). This, however, implies $\eta(T',V)>\eps$ and hence contradicts the definition of $\TT$.
 
 Second, if Algorithm~\ref{alg:greedy} does not terminate, we obtain a sequence of meshes $\TT_\ell$ which is refined arbitrarily often. This implies that we may define $\TT_\eps:=\TT_\ell$ for sufficiently large $\ell\in\N$ such that $\TT$ is not a refinement of $\TT_\eps$. Then, the arguments of the first part of the proof apply analogously.
\end{proof}

In the following, we emulate Algorithm~\ref{alg:greedy} with a deep RNN. The major obstacle is that we usually can not compute the required error estimator $\eta(T,V)$ exactly only using deep RNNs ($\eta(T,V)$ will contain some $L^2$-norm and hence must be approximated). To circumvent this, we assume the existence of a random variable $\rho(T,V)\geq 0$ (to be constructed later by means of Monte Carlo sampling) with $ \E(\rho(T,V))\leq q^{-1} \eta(T,V)$ and $\P(\rho(T,V)\geq q\eta(T,V))\geq \gamma>0$
for some $q>0$ and $\gamma>0$. The following algorithm is similar to Algorithm~\ref{alg:greedy} with high probability as shown in Lemma~\ref{lem:stochopt} below.
\begin{algorithm}\label{alg:greedystoch}
 Input: Function $V\in H^1(D)$, tolerance $\eps>0$, initial mesh $\TT_0$\\
 For $\ell = 0,1,2,\ldots$ do:
 \begin{itemize}
  \item[(i)] For all $T\in\TT_\ell$ do:
  \begin{itemize}
  \item[(a)] Sample $\rho(T,V)$ $K$-times. 
  \item[(b)] If all samples satisfy $\rho(T,V)\leq \eps$ add $T$ to $\TT_{{\rm stop}}$.
  \item[(c)] Otherwise, add $T$ to $\MM_\ell$.
  \end{itemize}
  \item[(ii)] Remove all elements $T$ from $\MM_\ell$ for which there exists $T'\in\TT_{\rm stop}$ with $T\subseteq T'$.
  \item[(iii)] Generate $\TT_{\ell+1}$ by refining the marked elements $\MM_\ell$ with newest vertex bisection and mesh closure.
 \end{itemize}
Output: The algorithm terminates if $\TT_\ell=\emptyset$ and outputs $\TT_\eps$.
\end{algorithm} 
% 
% Given $T\in\TT_\infty$, let $L_\eps(T,V)\in\N$ denote the minimal refinement level of an element $T'\in\TT_\infty$ with $|T'\cap T|>0$ such that $\eta(T',V)\leq \eps$.

\begin{lemma}\label{lem:stochopt}
 Let $\TT_\eps$ denote the output of Algorithm~\ref{alg:greedystoch} and $\TT_{\delta}'$ denote the output of 
 Algorithm~\ref{alg:greedy} with
 $\delta>0$. Then, there holds
 \begin{align*}
  \P\big(\forall T\in\TT_\eps:\,\eta(T,V)\leq \eps/q\big)\geq 1-(L+1)\#\TT_{\eps/q}'2^{-m},
 \end{align*}
 where $L\in\N$ is the number of iterations of Algorithm~\ref{alg:greedy} needed to produce $\TT_{\eps/q}'$ and $m=K/|\log(1-\gamma)|$. Furthermore, there holds $\P(\#\TT_\eps \leq C \max\{m,\#\TT_{\delta}'\})\geq 1-2^{-C\max\{m,\#\TT_{\delta}'\}}$,
where $C$ depends only on the shape regularity of $\TT_0$ and $\delta\simeq \eps/K$ with hidden constants depending additionally on $q$.
\end{lemma}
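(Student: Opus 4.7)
I would prove both parts by coupling Algorithm~\ref{alg:greedystoch} to the deterministic greedy Algorithm~\ref{alg:greedy}, treating the stochastic output as a perturbation of the deterministic one and controlling the perturbation via union and Chernoff bounds on the Monte Carlo samples. Lemma~\ref{lem:hopt} provides the key translation between ``no element has $\eta$ above a tolerance'' and ``the mesh dominates $\TT'_{\rm tol}$ in the bisection tree''.

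\textbf{First claim (correctness of the output).} Fix an element $T$ appearing in some iteration $\TT_\ell$ with $\eta(T,V)>\eps/q$. The hypothesis $\P(\rho(T,V)\geq q\eta(T,V))\geq \gamma$ forces $\P(\rho(T,V)>\eps)\geq \gamma$, so the probability that all $K$ independent samples of $\rho(T,V)$ fall below $\eps$ (which is exactly the event that $T$ is wrongly moved to $\TT_{\rm stop}$) is at most $(1-\gamma)^K \leq 2^{-m}$. Call $G$ the event that no such erroneous stop ever occurs. Conditional on $G$, the stochastic algorithm refines only elements with $\eta(T,V)>\eps/q$; hence its terminal mesh satisfies $\eta(T,V)\leq \eps/q$ for every $T$, and by Lemma~\ref{lem:hopt} all intermediate meshes $\TT_\ell$ are dominated (in the bisection tree) by $\TT'_{\eps/q}$. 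Consequently at most $L$ iterations occur and each intermediate mesh contains at most $\#\TT'_{\eps/q}$ elements, so the total number of distinct elements ever inspected is at most $(L+1)\#\TT'_{\eps/q}$. A union bound over these elements gives
\[
\P(G^c) \leq (L+1)\#\TT'_{\eps/q}\, 2^{-m},
\]
which is the first estimate.

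\textbf{Second claim (size of the output).} I would calibrate the implicit constant in $\delta\simeq q\eps/K$ so that whenever $\eta(T,V)\leq \delta$, Markov's inequality yields
\[
\P(\rho(T,V)>\eps) \leq \E\rho(T,V)/\eps \leq \eta(T,V)/(q\eps) \leq 1/(2K),
\]
and hence, by a union bound over the $K$ samples, the probability that $T$ is refined at the corresponding iteration is at most $1/2$. Assuming (as is standard for localized $L^2$-type residual estimators) that $\eta$ is sub-additive under refinement, every element generated by Algorithm~\ref{alg:greedystoch} that strictly refines $\TT'_\delta$ inherits $\eta\leq\delta$, so its refinement is a Bernoulli trial with parameter at most $1/2$, conditionally independent given the current mesh. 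The number $N$ of such ``excess'' refinements is therefore stochastically dominated by a sum of independent biased Bernoullis, and a Chernoff bound gives $\P(N\geq Cm)\leq 2^{-Cm}$. Combining the deterministic overhead $\#\TT'_\delta$ with $N$ and absorbing the bounded mesh-closure blow-up of newest-vertex-bisection (a multiplicative constant depending only on the shape regularity of $\TT_0$) yields $\#\TT_\eps\leq C\max\{m,\#\TT'_\delta\}$ with the stated probability.

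\textbf{Main obstacle.} The delicate step is the concentration in the second claim: newest-vertex-bisection with mesh closure couples refinement decisions across neighbouring elements, so the ``excess refinements'' are not literally independent Bernoullis. One must either bound the closure overhead by a multiplicative shape-regularity constant and decouple the residual Bernoulli decisions layer-by-layer in the bisection tree, or directly apply a Chernoff bound to the tree's random growth; either route requires carefully identifying the right filtration so that the sub-additivity of $\eta$ under refinement can be combined with the Markov estimate above.
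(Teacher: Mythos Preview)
Your first claim follows the paper's strategy (a union bound over ``erroneous stops''), but the bookkeeping contains a real error. The event $G$ you define says only that no element with $\eta(T,V)>\eps/q$ is wrongly placed in $\TT_{\rm stop}$; it does \emph{not} prevent an element with $\eta(T,V)\leq\eps/q$ from producing a sample $\rho(T,V)>\eps$ and landing in $\MM_\ell$. Hence, conditional on $G$, the intermediate meshes $\TT_\ell$ need not be coarser than $\TT'_{\eps/q}$, and your count ``at most $(L+1)\#\TT'_{\eps/q}$ distinct elements ever inspected'' is unjustified. What $G$ actually gives is that every element with $\eta>\eps/q$ eventually gets refined, so the \emph{output} $\TT_\eps$ is a refinement of $\TT'_{\eps/q}$ --- the opposite domination to the one you assert. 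To size the union bound rigorously one has to couple with the size estimate of the second claim, or otherwise account for over-refinement; the paper is admittedly terse here as well.

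For the second claim your route genuinely differs from the paper's. The paper does not attempt a Chernoff bound on (conditionally independent) Bernoulli excess refinements. Instead it views the excess refinements as a random binary forest rooted in $\TT'_{q\eps/C}$, bounds the number of ordered binary forests with $N$ leaves and $s$ roots by a Catalan-number identity giving at most $(2e)^N$, bounds the probability that any fixed forest with $r$ excess refinements is realised by $\binom{2N-s}{r}\bigl(1-(1-1/C)^K\bigr)^r$, and then takes a union bound over all forests. Choosing $C\simeq K$ (whence $\delta\simeq\eps/K$) makes the per-refinement probability $1/\kappa$ small enough to beat the combinatorial $(2e)^N$ growth, yielding $\P(r)\leq 2^{-2C_{\rm cl}r}$. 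This first-moment/counting argument completely sidesteps the independence obstacle you correctly flag: no filtration, no layer-by-layer decoupling, no branching-process comparison is needed, and the mesh-closure overhead enters only through the deterministic constant $C_{\rm cl}$. Your Chernoff/Galton--Watson idea could plausibly be pushed through (with a strictly subcritical offspring bound rather than the marginal $1/2$ your union-over-$K$-samples gives), but the paper's counting argument is both shorter and structurally cleaner with respect to the coupling issue.
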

\begin{proof}
%For $C>0$, define two error cases of Algorithm~\ref{alg:greedystoch}:
%begin{itemize}
% \item[(E1)] $\eta(T,V)> \eps/q$ but $T$ is added to $\TT_\eps$
% \item[(E2)] $\eta(T,V)\leq \eps/(qC)$ but $T$ is not added to $\TT_\eps$
%\end{itemize}
In the following, meshes with a dash, e.g., $\TT'$, always denote meshes generated by Algorithm~\ref{alg:greedy}.
To compare Algorithm~\ref{alg:greedystoch} with Algorithm~\ref{alg:greedy}, we want to bound the probability of
\begin{align*}
\text{(E1):}\quad \eta(T,V)> \eps/q\quad\text{ but }T\text{ is added to }\TT_{\rm stop}.
\end{align*}
If (E1) does not occur during the runtime of the algorithm, then the output $\TT_\eps$ satisfies
\begin{align*}
\eta(T,V)\leq \eps/q \quad\text{for all }T\in\TT_\eps.
\end{align*}
% Hence, Lemma~\ref{lem:hopt} implies that  $\TT_\eps$ is a refinement of $\TT_{\eps/q}'$ (the output of Algorithm~\ref{alg:greedy}).
The sampling procedure in Step~(i) of Algorithm~\ref{alg:greedystoch} ensures that 
(E1) happens in step $\ell$ with probability less than $\#\TT_\ell (1-\gamma)^{K}$. Thus, to ensure that (E1)
does not occur with high probability over the runtime 
of the algorithm, we choose $K=  |\log(1-\gamma)|m$. 
For $L\in\N$ iterations of Algorithm~\ref{alg:greedystoch}, this guarantees that (E1) occurs with probability less than
\begin{align*}
 \sum_{\ell=0}^L \#\TT_\ell 2^{-m}\leq (L+1)\#\TT_L2^{-m}.
\end{align*}
If we set $L$ to the number of iterations of Algorithm~\ref{alg:greedy} to produce $\TT_{\eps/q}'$, then we conclude $\P\big(\forall T\in\TT_\eps:\,\eta(T,V)\leq \eps/q\big)\geq 1-(L+1)\#\TT_L2^{-m}$ since without (E1), $\TT_\eps$ is a refinement of $\TT_{\eps/q}'$.

To estimate the number of additional refinements compared to Algorithm~\ref{alg:greedy}, we want to bound the probability of 
\begin{align*}
  \text{(E2):}\quad \eta(T,V)\leq q\eps/C\quad\text{ but }T\text{ is not added to }\TT_{\rm stop}.
\end{align*}
Note that (E2) occurs if $\rho(T,V)>\eps$ for at least one of $K$ independent samples despite $\eta(T,V)\leq q\eps/C$.
Markov's inequality shows for all $C>0$
 \begin{align*}
 \P(\rho(T,V)\geq C q^{-1}\eta(T,V))\leq 1/C.
 \end{align*}
 Therefore, the probability that (E2) occurs for an element $T\in\TT_\ell$ is bounded by $1-(1-1/C)^K$. 
  Since $\TT_{q\eps/C}'$ is the coarsest mesh to satisfy $\eta(T,V)\leq q\eps/C$ for all its elements, (E2) can only occur on elements of $\TT_{q\eps/C}'$ or refinements of them.  
  
Assume that (E2) occurs on $r\in\N$ elements before Algorithm~\ref{alg:greedystoch} terminates. 
Then, together with the mesh-closure estimate from~\cite{stevenson08}, the final mesh contains less than $C_{\rm cl}(\#\TT_{q\eps/C}'+r)$ elements, where $C_{\rm cl}>0$ depends only on the shape regularity $\TT_0$.

We will now consider refinement forests $\FF$, which are rooted in $\TT_{q\eps/C}'$. These are $\#\TT_{q\eps/C}'$ binary trees with root nodes corresponding to the elements of $\TT_{q\eps/C}'$.
We denote the total number of leaves of $\FF$ by $N(r)$. The leaves of a forest $\FF$ correspond  to a refinement $\TT$ of $\TT_{q\eps/C}'$. Not every binary forest corresponds to a conforming mesh without hanging nodes, but every newest-vertex-bisection mesh can be represented by at least one ordered binary forest (ordered in the sense that every parent node of every tree has exactly zero children or exactly one \emph{left} and one \emph{right} child). The number of different binary trees with $n$ leaves is given by the Catalan number $C_{n-1}$, where $C_n:=\binom{2n}{n}/(n+1)$, see, e.g.,~\cite[Example 5.3.12]{stanley}. Thus, the total number $M(r,s)$ of different forests $\FF$ with $s$ roots and $N(r)\geq s$ leaves is given by
\begin{align*}
 M(r,s):= \sum_{i_1+\ldots+i_s = N(r)-s} C_{i_1}C_{i_2}\cdots C_{i_s}.
\end{align*}
A combinatorial identity (see, e.g.~\cite{catalan}) shows
\begin{align*}
 M(r,s)&=\begin{cases}
          \frac{s(N(r)-s+1)(N(r)-s+2)\cdots(N(r)-s/2-1)}{2(N(r)-s/2+2)(N(r)-s/2+3)\cdots N(r)} C_{N(r)-s/2} & s\text{ is even}\\
          \frac{s(N(r)-s+1)(N(r)-s+2)\cdots(N(r)-(s+1)/2)}{2(N(r)-(s-3)/2)(N(r)-(s-3)/2+1)\cdots N(r)} C_{N(r)-(s+1)/2} & s\text{ is odd}
         \end{cases}\\
&\leq \frac{s}{2}C_{N(r)-\lceil s/2\rceil}
 \leq\frac{s}{2N(r)-s+1}\binom{2(N(r)-\lceil s/2\rceil)}{N(r)-\lceil s/2\rceil}\leq \frac{s(2e)^{N(r)-\lceil s/2\rceil}}{2N(r)-s+1}\leq (2e)^{N(r)},
\end{align*}
where we used $\binom{n}{k}\leq (e n/k)^k$ in the last estimate.

The probability $\P(\FF)$ that one particular forest $\FF$ occurs can be calculated as follows: Due to Step~(ib) of Algorithm~\ref{alg:greedystoch}, there is exactly one opportunity for (E2) to happen at each node of each tree $\RR$ of $\FF$ (if the element does not get refined by (E2), it never will be refined again by (E2)).

Thus, $\P(\FF)$ is bounded by the probability that $r$ instances of (E2) occur at some nodes of $\FF$. Since a binary tree with at most $N(r)$ leaves has at most $2N(r)-1$ nodes, a binary forest with $s$ roots and $N(r)$ leaves has $2N(r)-s$ nodes. This implies $\P(\FF)\leq \binom{2N(r)-s}{r}(1-(1-1/C)^K)^r$. Thus, the probability $\P(r)$, that Algorithm~\ref{alg:greedystoch} produces an arbitrary forest  with $N(r)$ leaves,  is bounded by
\begin{align*}
 \P(r)&\leq M(r,s)\binom{2N(r)-s}{r}(1-(1-1/C)^K)^r\leq \binom{2N(r)-s}{r}(2e)^{N(r)}(1-(1-1/C)^K)^r.
\end{align*}
Choosing $r\geq\#\TT_{q\eps/C}'$, and $C=\kappa K$, we obtain $(1-(1-1/C)^K)\leq 1/\kappa$ as well as $\binom{2N(r)-s}{r}\leq (4C_{\rm cl} e)^r$. Since $N(r)\leq 2 C_{\rm cl} r$, this shows
$\P(r)\leq (8C_{\rm cl}e^2)^{2C_{\rm cl} r}\kappa^{-r}$.
Finally, the choice $\kappa=(16C_{\rm cl}e^2)^{2 C_{\rm cl}}$ bounds the probability by
\begin{align*}
 \P(r)\leq  2^{-2C_{\rm cl}r}.
\end{align*}
Thus, the probability of $\TT_\eps$ having more than $\max\{C_{\rm cl}(\#\TT_{q\eps/C}'+m),2C_{\rm cl}\#\TT_{q\eps/C}'\}$ elements is thus bounded by
\begin{align*}
 \sum_{r=\max\{m,\#\TT_{q\eps/C}\}}^\infty 2^{-N(r)}\lesssim 2^{-\max\{C_{\rm cl}(\#\TT_{q\eps/C}'+m),2C_{\rm cl}\#\TT_{q\eps/C}'\}}.
\end{align*}
This concludes the proof.
\end{proof}

\begin{lemma}\label{lem:MCerrest}
 For $V\in L^2(\Dom)$ and $T\in\TT$, define
 \begin{align*}
  \rho(T,V):= \Big(\frac{|T|}{N}\sum_{i=1}^N\big(V(\phi_T(x_i)) - \frac{1}{N}\sum_{j=1}^N V(\phi_T(y_j))\big)^2\Big)^{1/2},
 \end{align*}
where $x_1,\ldots,x_N$ and $y_1,\ldots,y_N$ are uniformly i.i.d. points on the reference element $T_{\rm ref}$ and $\phi_T\colon T_{\rm ref}\to T$ is the affine transformation. Then, there holds
\begin{align*}
 \E\rho(T,V)^2 =(1+1/N)\eta(T,V)^2:= (1+1/N)\norm{V-\Pi_T^0 V}{L^2(T)}^2.
\end{align*}
Assume that $V^2\in L^\infty_w(D)$. Then, there exists $C,\gamma>0$ such that each $T\in\TT_\infty$ satisfies
\begin{align*}
  \P(\rho(T,V)^2\geq (1+1/N)\eta(T,V)^2\geq \gamma.%\quad%\text{or}\quad \eta(T,V)^2\leq C
  %|T|h_T^{k}.
\end{align*}
The constants $\gamma$ and $C$ depend only on $N$, $\norm{V^2}{L^\infty_w(D)}$, and the constant $C$ from Lemma~\ref{lem:EX}.
\end{lemma}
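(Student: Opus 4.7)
The plan is to first compute $\E\rho(T,V)^2$ by a standard Monte Carlo bias--variance calculation, and then to view $\rho(T,V)^2$ as a non-negative random variable on the product sample space $\Omega:=T_{\rm ref}^{2N}$ and bound its weighted $L^\infty$ norm uniformly in $T\in\TT_\infty$ so that Lemma~\ref{lem:EX} applies.

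For the first step, set $Z_i:=V(\phi_T(x_i))$ and $W_j:=V(\phi_T(y_j))$; these are independent with common mean $\mu=\Pi_T^0 V$ and variance $\sigma^2=\eta(T,V)^2/|T|$, which follows from the change of variables and the identity $\int_T(V-\Pi_T^0 V)^2=\int_T V^2-|T|\mu^2$. Since $Z_i$ and $\bar W:=\tfrac1N\sum_j W_j$ are independent with $\E\bar W=\mu$ and $\E\bar W^2=\mu^2+\sigma^2/N$, one obtains $\E(Z_i-\bar W)^2=(1+1/N)\sigma^2$, and summing over $i$ and multiplying by $|T|/N$ gives $\E\rho(T,V)^2=(1+1/N)\eta(T,V)^2$.

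The main work lies in the second step. Using the elementary bound $\rho(T,V)^2\le \tfrac{2|T|}{N}\sum_i Z_i^2+\tfrac{2|T|}{N}\sum_j W_j^2$ together with the pointwise estimate $V^2\le \norm{V^2}{L^\infty_w(D)}/w$, I would transfer the weight $w$ from $D$ to $T_{\rm ref}$ through $\phi_T$. Shape regularity of $\TT_0$ and the affine nature of $\phi_T$ yield $\dist(\phi_T(x),S_k)\simeq \diam(T)\,\dist(x,\phi_T^{-1}(S_k\cap T))$ when $\phi_T(x)$ is close to $S_k$; setting $\alpha_k:=\max\{0,d-k-\delta_{\rm reg}\}$ this gives
\begin{align*}
\frac{|T|}{w(\phi_T(x))}\lesssim \max_{k}\frac{\diam(T)^{\,d-\alpha_k}}{\dist(x,\phi_T^{-1}(S_k\cap T))^{\alpha_k}}.
\end{align*}
Because $d-\alpha_k=\min\{d,k+\delta_{\rm reg}\}>0$ and $\diam(T)\le\diam(D)$, the prefactor is uniformly bounded, and when $T\cap S_k=\emptyset$ the weight $w\circ\phi_T$ is bounded below by a positive constant depending only on $D$ and $\TT_0$. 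Taking the singularity set on $\Omega$ to be the union over the $2N$ coordinate slots of the lifts of $\phi_T^{-1}(S\cap T)$ and defining $\tilde w$ on $\Omega$ accordingly, one obtains $\rho(T,V)^2\in L^\infty_{\tilde w}(\Omega)$ with a norm depending only on $N$, $D$, $\TT_0$, $\delta_{\rm reg}$, and $\norm{V^2}{L^\infty_w(D)}$, but \emph{not} on $T$.

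After rescaling $\Omega$ to unit measure, Lemma~\ref{lem:EX} applied to $X=\rho(T,V)^2$ with a fixed $q\in(0,1)$ yields $\P(\rho(T,V)^2\ge q\,\E\rho(T,V)^2)\ge\gamma$ for some $\gamma>0$ with the stated dependencies, and combining with Step~1 gives the desired probability estimate. The chief obstacle is precisely the uniformity in $T$ established in Step~2: since the singularity set on $\Omega$ depends on $T$, one must carefully track the powers of $\diam(T)$ produced by the change of variables and exploit $\delta_{\rm reg}>0$ to obtain a strictly positive exponent that can be absorbed using $\diam(T)\le\diam(D)$.
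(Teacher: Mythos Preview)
Your proposal is correct and follows essentially the same approach as the paper: a direct bias--variance computation for the expectation, then transferring the weighted $L^\infty$ bound on $V^2$ to the reference-element sample space via the scaling $\diam(T)^d\,w_\Omega \lesssim w\circ\phi_T$ and invoking Lemma~\ref{lem:EX}. Your choice of sample space $\Omega=T_{\rm ref}^{2N}$ is arguably cleaner than the paper's $\Omega=\bigcup_{i=1}^N T_{{\rm ref},i}\subset\R^d$, but the key weight-transfer estimate and the uniformity-in-$T$ argument (absorbing the positive power $\diam(T)^{d-\alpha_k}$ via $\diam(T)\le\diam(D)$) are identical.
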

\begin{proof}
 Expansion of $\rho(T,V)$ as well as the independence of the Monte Carlo points show
 \begin{align*}
  N\E\rho(T,V)^2 &=\sum_{i=1}^N\E\Big(|T|V(\phi_T(x_i))^2 - \frac{2|T|}{N}V(\phi_T(x_i))\sum_{j=1}^NV(\phi_T(y_j)) + \frac{|T|}{N^2}\sum_{j,k=1}^NV(\phi_T(x_j))V(\phi_T(x_k))\Big) \\
  &=
  \sum_{i=1}^N \Big(\int_T V^2\,dx -2|T|^{-1}(\int_T V\,dx)^2
  +|T|^{-1}(1-\frac{1}{N})(\int_T V\,dx)^2+\frac1N\int_TV^2\,dx\Big)\\
  &=N\Big((1+1/N)\int_T V^2\,dx - |T|(1+1/N)(\Pi_T^0 V)^2\Big)
  =
  N(1+1/N)\norm{V-\Pi_T^0 V}{L^2(T)}^2.
 \end{align*}
To show the second statement, we employ Lemma~\ref{lem:EX}. To that end, note that $X:=\rho(T,V)^2$ is a
 non-negative random variable on $\Omega:= \bigcup_{i=1}^N T_{{\rm ref},i}\subset \R^{d}$,
 where $T_{{\rm ref},i}$ are pairwise disjoint copies of the reference element scaled to $|T_{{\rm ref},i}|=1/N$. We define $w_\Omega$ analogously to $w$ on $\Omega$ with respect to the transformed singularity set $S_{\Omega}:= \bigcup_{i=1}^N\phi_{T,i}^{-1}(S\cap T)$, where $\phi_{T,i}\colon T_{{\rm ref},i}\to T$ are the affine element mappings.
 Note that there holds ${\rm diam}(T)^dw_\Omega(x)\lesssim w\circ \phi_{T,i}(x)$ by definition of the weight $w$.
 By assumption, we have $V^2\in L^\infty_w(D)$ and hence
 \begin{align*}
 \norm{w_\Omega X}{L^\infty(\Omega)} \lesssim |T| \norm{w_\Omega V^2\circ \phi_T}{L^\infty(\Omega)}\lesssim \norm{V^2}{L^\infty_{w}(D)},
 \end{align*}
where we used $|T|\simeq {\rm diam}(T)^d$.
 %Moreover, by assumption
% on $V$, there holds $X\in W^{k,\infty}_{w}(\Omega)$ with respect to the singular set $\widetilde S:=\phi_T^{-1}(S)^{2N}$. This follows from the fact that $|D^k (V\circ \phi_T)|\simeq h_T^{k} |T||D^k V|$ pointwise almost everywhere.
Hence, Lemma~\ref{lem:EX} applies and proves $\P(X\geq q\E(X))\gtrsim 1$. This concludes the proof.
% We consider two cases. First, assume $|X|_{W^{k,\infty}_w(\Omega)}\leq C\E(X)$ for some universal constant $C>0$. Then, we have $\P(X\geq q\E(X))^\alpha\gtrsim 1$ and conclude the proof. Second, if $|X|_{W^{k,\infty}_w(\Omega)} > C\E(X)$, a scaling argument shows
% \begin{align*}
%  \norm{X}{L^1(\Omega)}\lesssim |X|_{W^{k,\infty}_w(\Omega)}\lesssim h^{k} |V|_{W^{k,\infty}(D)}^2
% \end{align*}
% and hence
% \begin{align*}
%  \eta(T,V)^2 
%  \lesssim h_T^{k}\norm{V}{W^{k,\infty}_w(T)}^2.
% \end{align*}
%Altogether, this concludes the proof.
\end{proof}

Finally, we complete Algorithm~\ref{alg:greedystoch} by replacing the theoretical error estimator $\rho(T,V)$ by the concrete DNN approximation $\rho_T$.
\begin{algorithm}\label{alg:complete}
 Input: Function $V\in H^1(D)$, tolerance $\eps>0$, initial mesh $\TT_0$\\
 For $\ell = 0,1,2,\ldots$ do:
 \begin{itemize}
  \item[(i)] For all $T\in\TT_\ell$ do:
  \begin{itemize}
  \item[(a)] Sample $\rho_T$ $K$-times. 
  \item[(b)] If all samples satisfy $\rho_T\leq \eps$ add $T$ to $\TT_{{\rm stop}}$.
  \item[(c)] Otherwise, add $T$ to $\MM_\ell$.
  \end{itemize}
  \item[(ii)] Remove all elements $T$ from $\MM_\ell$ for which there exists $T'\in\TT_{\rm stop}$ with $T\subseteq T'$.
  \item[(iii)] Generate $\TT_{\ell+1}$ by refining the marked elements $\MM_\ell$ with newest vertex bisection and mesh closure.
 \end{itemize}
Output: The algorithm terminates if $\MM_\ell=\emptyset$ and outputs $\TT_\eps:=\TT_\ell$.
\end{algorithm} 

\begin{theorem}\label{thm:generrest}
Let $u\in H^1(D)$.
Given $\eps>0$, let $v_\eps$ denote an approximation to $\nabla u$ which satisfies $\norm{\nabla u - v_\eps}{L^2(D)}\leq \eps$ as well as $v_\eps^2\in L^\infty_w(D)$.
We assume that the refinement indicator $\rho_T$ satisfies $|\rho_T- \rho(T,v_\eps)|\leq \eps/4$ with
$\rho(T,\cdot)$ from Lemma~\ref{lem:MCerrest}. We denote
 by $\TT_\eps$ the output of Algorithm~\ref{alg:complete} as well as by $\TT_{\delta}'$ the output of 
 Algorithm~\ref{alg:greedy} (with $V:=v_\eps$). Then, there holds
 \begin{align*}
  \P\big(\forall T\in\TT_\eps:\,\norm{\nabla u - \Pi_{\TT_\eps}^0 \nabla u}{L^2(T)}\leq (1+2/q)\eps\big)\geq 1-(L+1)\#\TT_{2\eps/q}'2^{-m},
 \end{align*}
 where $L\in\N$ is the number of iterations of Algorithm~\ref{alg:greedy} needed to produce $\TT_{2\eps/q}'$ and $m=K/|\log(1-\gamma)|$. Furthermore, there holds $\P(\#\TT_\eps \leq C \max\{m,\#\TT_{\delta}'\})\geq 1-2^{-C\max\{m,\#\TT_{\delta}'\}}$,
where $C$ depends only on the shape regularity of $\TT_0$, $N$ in the definition of $\rho(T,\cdot)$, and $\norm{v_\eps^2}{L^\infty_w(D)}$. There holds $\delta\simeq \eps/K$ with hidden constants depending additionally on $q$.
\end{theorem}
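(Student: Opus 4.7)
The plan is to reduce the theorem to Lemma~\ref{lem:stochopt} by treating the deterministic perturbation $\rho_T = \rho(T,v_\eps) + O(\eps)$ of size $\eps/4$ as a modification of the thresholds in Algorithm~\ref{alg:greedystoch}, and then to transfer the error bound on $v_\eps$ to $\nabla u$ via the $L^2$-orthogonality of $\Pi^0_T$.

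First, observe the two-sided sandwich: since $|\rho_T-\rho(T,v_\eps)|\leq \eps/4$, the event ``all $K$ samples yield $\rho_T\leq\eps$'' implies that all $K$ samples of $\rho(T,v_\eps)$ lie below $5\eps/4$, and is implied by the event that all $K$ samples of $\rho(T,v_\eps)$ lie below $3\eps/4$. Thus the behaviour of Algorithm~\ref{alg:complete} with tolerance $\eps$ is sandwiched between two executions of Algorithm~\ref{alg:greedystoch} (with $V=v_\eps$) having thresholds $5\eps/4$ and $3\eps/4$.

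Next, I would choose $q\in(0,1)$ in Lemmas~\ref{lem:EX}/\ref{lem:MCerrest} small enough that $5q/4\leq 2\sqrt{q(1+1/N)}$ (for example $q\leq 64/25$, which is harmless). With this choice, if $\eta(T,v_\eps)>2\eps/q$ then $\sqrt{q(1+1/N)}\,\eta(T,v_\eps)>5\eps/4$, and Lemma~\ref{lem:MCerrest} gives $\P(\rho(T,v_\eps)>5\eps/4)\geq\gamma$. Since $\rho_T\geq \rho(T,v_\eps)-\eps/4$, this yields $\P(\rho_T>\eps)\geq\gamma$, so the probability that $T$ is wrongly added to $\TT_{\rm stop}$ is at most $(1-\gamma)^K\leq 2^{-m}$ with $m=K/|\log(1-\gamma)|$. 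The ``(E1)-analysis'' in the proof of Lemma~\ref{lem:stochopt} then applies verbatim with $\eps/q$ replaced by $2\eps/q$: union-bounding over the $L$ iterations and over the at most $\#\TT'_{2\eps/q}$ elements that can be visited yields the claimed probability $1-(L+1)\#\TT'_{2\eps/q}2^{-m}$. On the complementary event, every $T\in\TT_\eps$ satisfies $\eta(T,v_\eps)\leq 2\eps/q$, and the $L^2$-orthogonality of $\Pi^0_T$ combined with $\norm{\nabla u-v_\eps}{L^2(T)}\leq\norm{\nabla u-v_\eps}{L^2(D)}\leq \eps$ gives
\begin{align*}
\norm{\nabla u-\Pi^0_T\nabla u}{L^2(T)}\leq \norm{\nabla u-v_\eps}{L^2(T)}+\norm{v_\eps-\Pi^0_T v_\eps}{L^2(T)}\leq \eps+2\eps/q=(1+2/q)\eps,
\end{align*}
which is the first assertion.

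For the cardinality estimate, the ``(E2)-analysis'' of Lemma~\ref{lem:stochopt} transfers: the event that an element $T$ with $\rho(T,v_\eps)\leq 3\eps/4$ (which forces $\rho_T\leq\eps$) is nonetheless refined through some bad sample is controlled by Markov's inequality for $\rho(T,v_\eps)^2$, and the combinatorial binary-forest count from the original proof bounds the probability that more than $C\max\{m,\#\TT'_\delta\}$ elements are produced by $2^{-C\max\{m,\#\TT'_\delta\}}$, with $\delta\simeq \eps/K$ as before (the constants absorb the $3\eps/4$ instead of $\eps$). The main obstacle is the uniform bookkeeping of the $\eps/4$ perturbation across the three places where the threshold $\eps$ enters (the stopping test, the lower tail bound from Lemma~\ref{lem:MCerrest}, and the triangle inequality for $\nabla u$), which forces the calibration $q\leq 64(1+1/N)/25$ above; once this calibration is fixed, the proof reduces to rewriting that of Lemma~\ref{lem:stochopt} with $\eps$ replaced by $5\eps/4$ (for (E1)) and $3\eps/4$ (for (E2)).
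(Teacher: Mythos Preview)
Your approach is correct and follows the same overall outline as the paper: reduce to the framework of Lemma~\ref{lem:stochopt} to control both the error and the cardinality, then pass from $v_\eps$ to $\nabla u$ via the triangle inequality and best-approximation property of $\Pi^0_T$. The difference lies in how the perturbation $|\rho_T-\rho(T,v_\eps)|\leq\eps/4$ is absorbed. You sandwich the test $\{\rho_T\leq\eps\}$ between $\{\rho(T,v_\eps)\leq 3\eps/4\}$ and $\{\rho(T,v_\eps)\leq 5\eps/4\}$ and then rerun the (E1)/(E2) analysis of Lemma~\ref{lem:stochopt} with shifted thresholds. The paper instead introduces an auxiliary indicator
\[
\widetilde\rho_T:=\begin{cases}\rho(T,v_\eps)&\rho(T,v_\eps)<\eps/2,\\\rho_T&\text{else},\end{cases}
\]
checks that $\widetilde\rho_T\leq\eps$ iff $\rho_T\leq\eps$ (so the algorithm is unchanged), and verifies the two-sided bound $\widetilde\rho_T/2\leq\rho(T,v_\eps)\leq 2\widetilde\rho_T$. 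This makes $\widetilde\rho_T$ itself satisfy the hypotheses required in Lemma~\ref{lem:stochopt} (with $q$ replaced by $q/2$), so that lemma can be invoked as a black box rather than reproved. Your route costs a bit more bookkeeping; the paper's buys a cleaner reduction.

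One wording issue in your (E2) paragraph: you write ``an element $T$ with $\rho(T,v_\eps)\leq 3\eps/4$ (which forces $\rho_T\leq\eps$) is nonetheless refined through some bad sample''. But if $\rho(T,v_\eps)\leq 3\eps/4$ for a given sample then $\rho_T\leq\eps$ for that sample, deterministically; there is no bad sample. The (E2) event must be phrased in terms of the \emph{deterministic} quantity $\eta(T,v_\eps)$: for $T$ with $\eta(T,v_\eps)\leq q\eps/C$, Markov's inequality (via $\E\rho(T,v_\eps)^2=(1+1/N)\eta(T,v_\eps)^2$) bounds $\P(\rho(T,v_\eps)>3\eps/4)$ and hence $\P(\rho_T>\eps)$. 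With that correction your argument goes through; also note that your calibration $q\leq 64(1+1/N)/25$ is automatic since $q<1$.
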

\begin{proof} 
%We may assume $\eta(T,V)> \eps$  on all $T\in\TT_0$ (otherwise we just remove those elements from the mesh).
We define
\begin{align*}
 \widetilde \rho_T:=\begin{cases}
                     \rho(T,v_\eps) & \rho(T,v_\eps)<\eps/2,\\
                     \rho_T &\text{ else.}
                    \end{cases}
\end{align*}
Note that $\widetilde \rho_T\leq \eps$ if and only if $\rho_T\leq \eps$. Hence, Algorithm~\ref{alg:complete} produces exactly the same output when we replace $\rho_T$ with $\widetilde \rho_T$. We prove 
\begin{align}\label{eq:equiv}
\widetilde \rho_T/2\leq \rho(T,v_\eps)\leq 2\widetilde \rho_T.
\end{align}
To see this, we only need to consider the case $\rho(T,v_\eps)\geq \eps/2$. There holds $\widetilde \rho_T = \rho_T\leq \rho(T,v_\eps)+\eps/4 \leq 2 \rho(T,v_\eps)$ as well as $\rho(T,v_\eps)\leq \rho_T + \eps/4\leq \rho_T + \rho(T,v_\eps)/2$. This concludes~\eqref{eq:equiv}.

 Lemma~\ref{lem:MCerrest} confirms that $\E(\widetilde\rho_T) \leq 2q^{-1}\eta(T,v_\eps)$ as well as $\P(\widetilde\rho_T\geq q/2 \eta(T,v_\eps))\geq\gamma$ are satisfied
for all elements $T\in\TT_\infty$.
 Thus, Lemma~\ref{lem:stochopt} applies and concludes \begin{align*}
  \P\big(\forall T\in\TT_\eps:\,\eta(T,v_\eps)\leq 2/q\eps\big)\geq 1-(L+1)\#\TT_{2\eps/q}'2^{-m},
 \end{align*}
 as well as $\P(\#\TT_\eps \leq C \# \TT_{ \delta}')\geq 1-2^{-C\#\TT_{\delta}'}$. Note that $\TT_\delta'$ is generated by Algorithm~\ref{alg:greedy} with $V=v_\eps$.
 By assumption $\norm{\nabla u -v_\eps}{L^2(D)}\leq \eps$, we have $\norm{\nabla u - \Pi_{\TT_\eps}^0 \nabla u}{L^2(T)}\leq \eta(T,v_\eps) + \eps$ and hence conclude the proof.
% The well-known mesh closure estimate from~\cite{stevenson07} confirms that the number of additionally generated elements
% due to mesh closure is uniformly bounded by the number of marked elements.
% The number of marked elements is bounded by $\#\TT_{\eps/R}'$ since $R^{-1}\rho_T\leq\rho(T,v_\eps)$. 
% Hence, we also obtain $\P(\#\TT_\eps \leq 2\#\TT_{\eps/(CK)}'\geq 1-(e/3)^{\#\TT_{\eps/(CK)}'}$ from~\ref{lem:stochopt} for some universal $C>0$. This 
%and concludes the proof.
\end{proof}

% \begin{algorithm}\label{alg:completernn}
%  Input: Tolerance $\eps>0$, $K\in\N$, initial mesh $\TT_0$\\
%  For $\ell = 0,1,2,\ldots$ do:
%  \begin{itemize}
%   \item[(i)] For all $T\in\TT_\ell$ generate uniform i.i.d. samples $x_{T,1},\ldots,x_{T,KN}\in T$.
%   \item[(ii)] Apply $\texttt{ADAPTIVE}$ to the input sequence $\big((f|_T,x_{T,1},\ldots,x_{T,KN})\big)_{T\in\TT_\ell}$
%   and compute output sequence $\by\in\R^{\#\TT_\ell}$.
%   \item[(iii)] Define $\MM_\ell:=\set{T\in\TT_\ell}{y_T>0}\setminus\TT_{\ell-1}$ (or  $\MM_\ell:=\set{T\in\TT_\ell}{y_T>0}$ for $\ell=0$).
%   \item[(iv)] Generate $\TT_{\ell+1}$ by newest vertex bisection with mesh closure on the marked elements $\MM_\ell$.
%  \end{itemize}
% Output: The algorithm terminates if $\MM_\ell=\emptyset$ and outputs $\TT_\eps:=\TT_\ell$.
% \end{algorithm} 

\begin{proof}[Proof of Theorem~\ref{thm:if}]
Again we denote the meshes generated by Algorithm~\ref{alg:greedy} by $\TT_\delta'$.
We aim to apply Theorem~\ref{thm:generrest} with $N=1$.
 Under the assumption that the input $v_\eps$ satisfies $\norm{\nabla u - v_\eps}{L^2(D)}\leq \eps$,
 we use summation and taking the absolute value to construct a DNN $\rho_T$ which computes
 \begin{align*}
  \rho_T:=|T|^{1/2}\odot\big|v_\eps(x) -  v_\eps(y)\big|.
 \end{align*}
The DNN $\rho_T$ takes as input two samples of uniform i.i.d. points $x_{T},y_{T}\in T$ as well as the square root of the element area $|T|^{1/2}$ (this could also be computed via DNNs but we aim to keep the construction simple).
The approximate multiplication $\odot$ is realized via ${\rm MULTIPLY}$ and achieves accuracy $\mathcal{O}(\eps')$ with a DNN of the size $\mathcal{O}(|\log(\eps')| +
|\log(\norm{v_{\eps'}}{L^\infty(\Dom)})|)$ (see Proposition~\ref{prop:multiply}). Choosing $\eps'\simeq \eps$ sufficiently small, we ensure $|\rho_T-\rho(T,v_\eps)|\leq \eps/4$.

To denote the independent samples required from $\rho_T$ we define $\rho_{T,k}$ as the $k$-th sample computed with
input $x_{T,k},y_{T,k}\in T$.
We construct the RNN {\rm ADAPTIVE} via
\begin{align*}
 y_{T_i}={\rm ADAPTIVE}(\bx_{T_i}):= \max\{\max_{k=1,\ldots,K}\rho_{T_{i},k}, \eps\}-\eps,
\end{align*}
where $\bx_{T_i}:=(x_{T,k},y_{T,k},|T|^{1/2},\eps)_{k=1,\ldots,K}$ and $K\simeq m$.

This already proves the complexity bound in Theorem~\ref{thm:if}.
Furthermore, Algorithm~\ref{alg:RNN2} with ${\rm ADAPTIVE}$ is equivalent to Algorithm~\ref{alg:complete} with $\rho_T$. Hence Theorem~\ref{thm:generrest} applies with $q=1/2$ and proves 
\begin{align*}
  \P\big(\forall T\in\TT_\eps:\,\norm{\nabla u - \Pi_{\TT_\eps}^0 \nabla u}{L^2(T)}\leq 5\eps\big)\geq 1-(L+1)\#\TT_{4\eps}'2^{-m},
 \end{align*}
as well as $\P(\#\TT_\eps \leq C \max\{m,\#\TT_{\delta}'\})\geq 1-2^{-C\max\{m,\#\TT_{\delta}'\}}$ with $\TT_{\delta}'$  denoting the output of Algorithm~\ref{alg:greedy} with $V=v_\eps$ and $\delta\simeq \eps/m$. Let $\TT$ denote a mesh which satisfies~\eqref{eq:opt2} for minimal $N\in\N$ with $N^{-s-1/2}\leq \delta/2$, i.e.
\begin{align*}
 \eta(T,\nabla u)\leq N^{-s-1/2}\leq \delta/2\quad\text{for all }T\in\TT.
\end{align*}
Then, we have $\eta(T,v_\eps)\leq \delta/2+ \eps/(Cm)\leq \delta$ and
Lemma~\ref{lem:hopt} implies that $\TT$ is a refinement of $\TT_{\delta}'$, i.e., $\#\TT_\delta'\leq \#\TT\leq N$.  This shows that with probability larger than $(1-(L+1)\#\TT_{4\eps}'2^{-m})(1-2^{-C\max\{m,\#\TT_{\delta}'\}})$, there holds
\begin{align*}
\# \TT_\eps \max_{T\in\TT_\eps}\norm{\nabla u - \Pi_{\TT_\eps}^0 \nabla u}{L^2(T)}^{1/(s+1/2)}\lesssim \max\{m,N\}(5\eps)^{1/(s+1/2)}.
\end{align*}
Since $N-1\lesssim(\eps/m)^{-1/(s+1/2)}$, we conclude for $m\leq \#\TT_\eps$
\begin{align*}
 \#\TT_\eps \max_{T\in\TT_\eps}\norm{\nabla u - \Pi_{\TT_\eps}^0 \nabla u}{L^2(T)}^{1/(s+1/2)}\lesssim m^{1/(s+1/2)}
\end{align*}
 with probability larger than 
 \begin{align*}
  (1-(L+1)\#\TT_{4\eps}'2^{-m})(1-2^{-C\max\{m,\#\TT_{\delta}'\}})
  \geq 1-((L+1)\#\TT_{4\eps}'+1)2^{-Cm}.
 \end{align*}
Note that $\TT(4\eps)$ as defined in Section~\ref{sec:main2} is conforming and thus a refinement of $\TT_{4\eps}'$ (according to Lemma~\ref{lem:hopt}). Since $L$ is the number of iterations of Algorithm~\ref{alg:greedy}, we know that the maximal level of elements in $\TT_{4\eps}'$ is equal to $L$. This concludes the proof.
\end{proof}

\section{Numerical Experiments}\label{sec:numerics}

\subsection{Hardcoded deep RNN}
As a first experiment, we implement the RNN ${\rm ADAPTIVE}$ from 
Theorem~\ref{thm:refinement} exactly
as shown in the proofs of Section~\ref{sec:construction}. We run 
Algorithm~\ref{alg:RNN} on an L-shaped domain shown in 
Figure~\ref{fig:refinement}. 
We choose a constant right-hand side $f=1$ and start from a coarse triangulation 
with six elements. 
Figure~\ref{fig:convrate} shows that the adaptive method reaches the expected 
convergence rate of $\mathcal{O}(N^{-1/2})$, while the uniform 
approach only achieves a suboptimal rate due to the singularity at the 
re-entrant corner of the domain. Figure~\ref{fig:refinement} compares the 
adaptive meshes generated by ${\rm ADAPTIVE}$ to a standard adaptive mesh 
generated by Algorithm~\ref{alg:adaptive}.
This experiment's main purpose is to show that round-off errors do not spoil the 
theoretically shown performance. 
\begin{figure}
\psfrag{estimator}{\tiny estimator}
\psfrag{number of elements}{\tiny number of elements $\#\TT$}
\psfrag{adaptive}{\tiny adaptive}
\psfrag{uniform}{\tiny uniform}
 \includegraphics[width=0.5\textwidth]{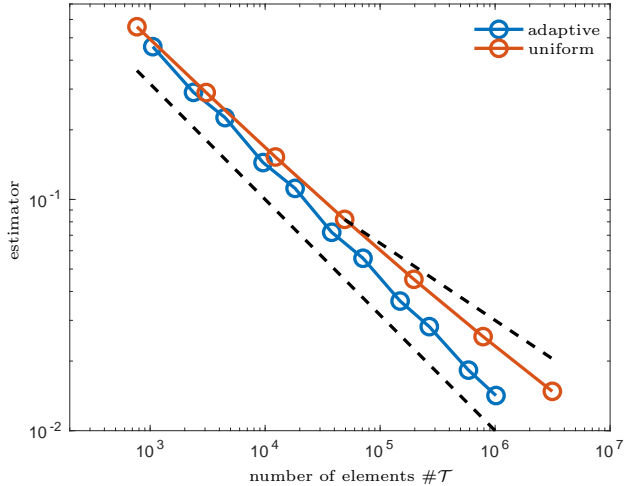}
 \caption{Comparision of the performance of the RNN {\rm ADAPTIVE} versus 
uniform mesh refinement. The dashed lines indicate the expected rates for 
uniform/adaptive refinement $\mathcal{O}(N^{-1/3})$ and 
$\mathcal{O}(N^{-1/2})$.}
 \label{fig:convrate}
\end{figure}
\begin{figure}
 \includegraphics[width=0.49\textwidth]{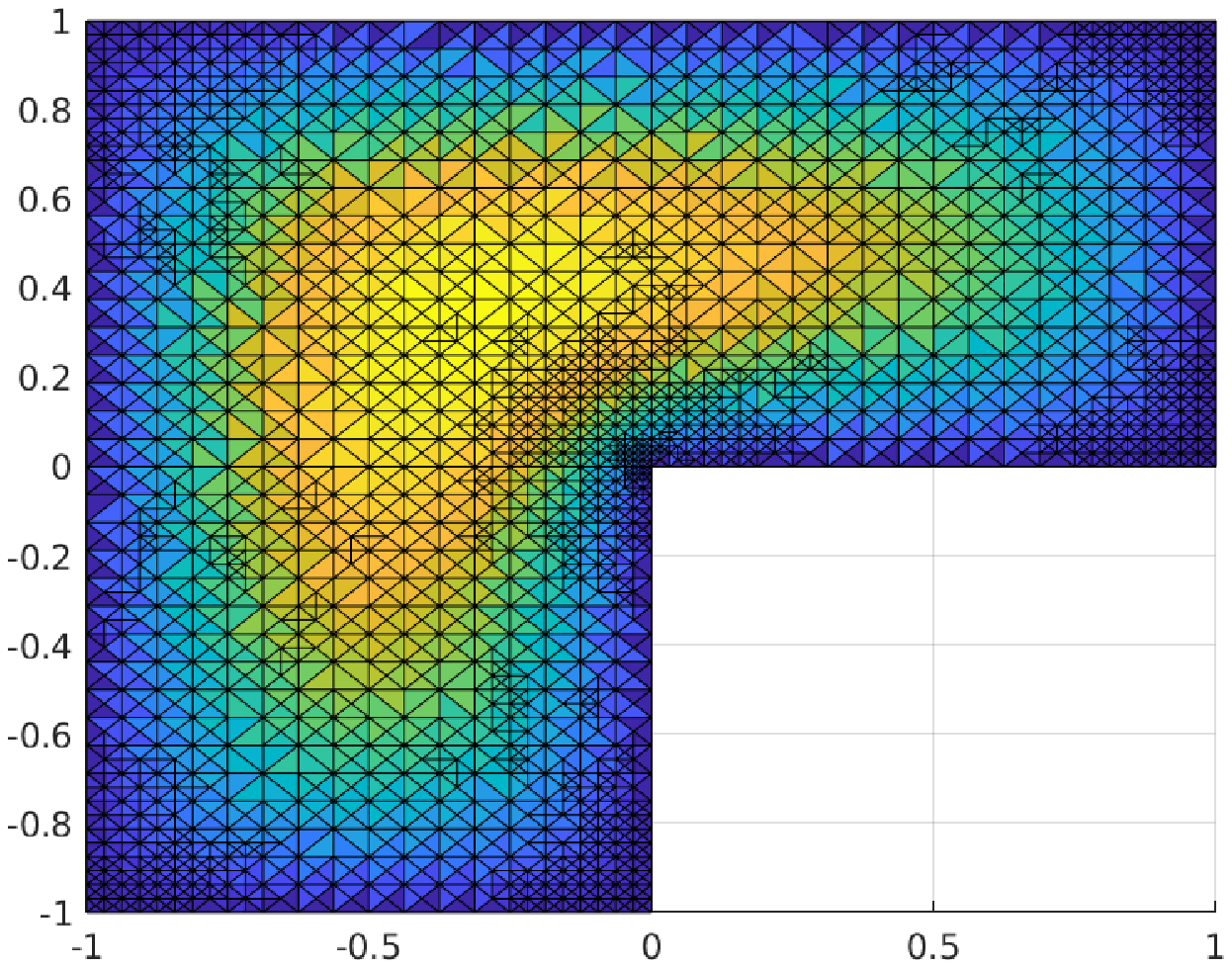}%
  \includegraphics[width=0.49\textwidth]{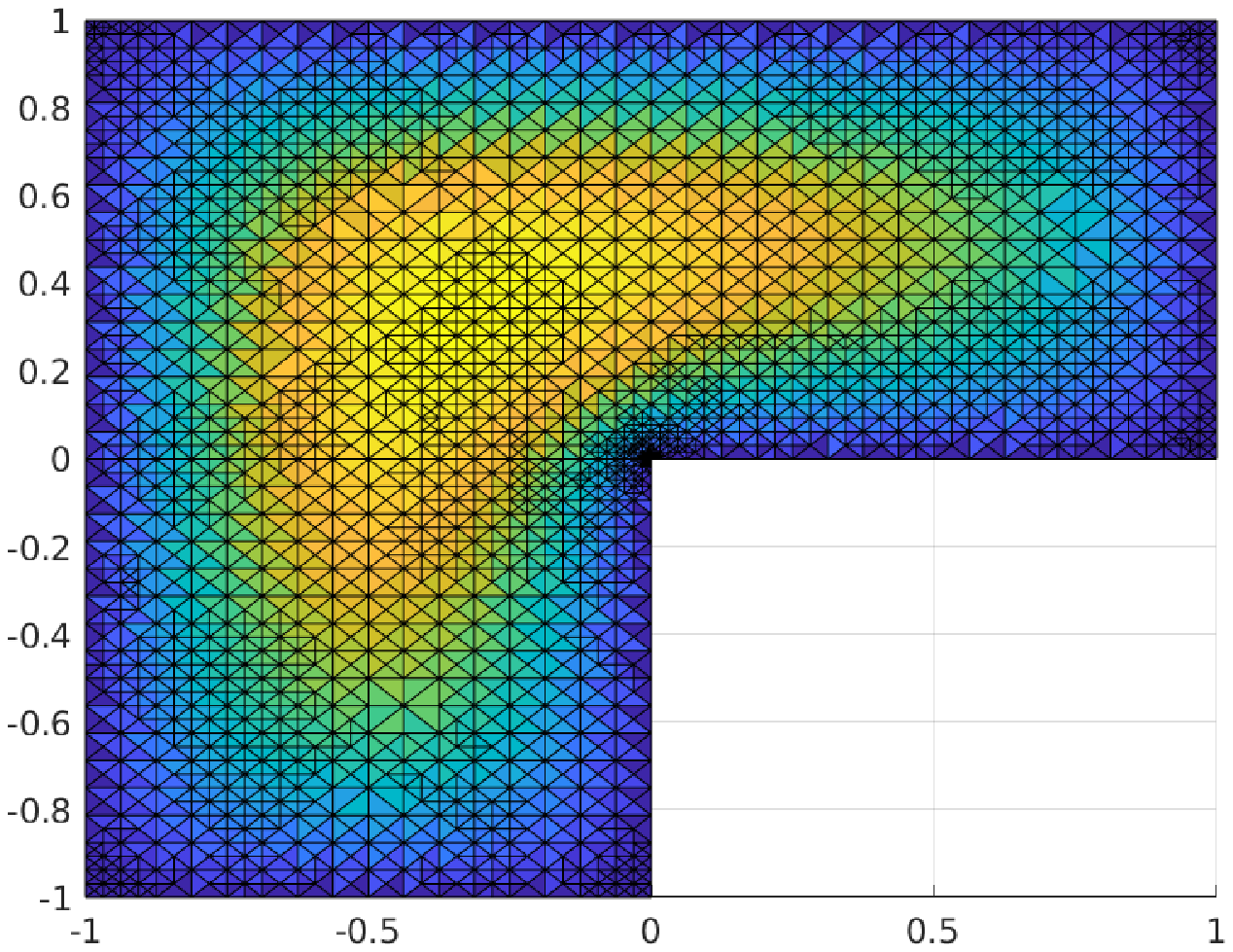}
 \caption{Comparision of the adaptive meshes generated after 9 steps of adaptive 
refinement with the RNN {\rm ADAPTIVE} (left) and the standard residual error 
estimator~\eqref{eq:errest} with D\"orfler marking (right).}
 \label{fig:refinement}
\end{figure}
\bigskip

 The more interesting experiment would be to find the the weights of ${\rm 
ADATIVE}$ by means of computational optimization (machine learning) as described 
in Section~\ref{sec:conclude}.
We do not cover this topic in its entirety, because the training of RNNs is a challenging topics on itself. However, we achieve some intermediate goals in the following two sections.
  
 \subsection{Learning the maximum strategy}

First, we try to find an RNN $\widetilde{\rm MARK}$ which, given 
the exact residual based error estimator from~\eqref{eq:errest}, marks elements 
and achieves the optimal order of convergence. To that end, we use the smallest 
possible blue print for an RNN such that it can represent the maximum strategy (which is much simpler than D\"orfler marking). 
The maximum strategy defines the set of marked elements as
\begin{align*}
 \MM:=\set{T\in\TT}{\rho_T> (1-\theta)\max_{T'\in\TT}\rho_{T'}}.
\end{align*}
Although it is not known wether the maximum strategy leads to optimal 
convergence in the sense of~\eqref{eq:opt}, it is usually observed in practice 
and~\cite{instanceopt} even shows optimality for a slight variation of this 
strategy. 
The maximum strategy can be realized by the combination of two basic RNNs. 
First, $B_1$ is defined for an input $\bx\in\R^{\#\TT}$, $x_i=\rho_{T_i}$ and 
output $\by\in \R^{2\times \#\TT}$ by
\begin{align*}
 y_i=B_1(x_i,y_{i-1})=(x_i,\max(x_i,y_{i-1,2})) = 
 \begin{pmatrix}
  1 & -1 & 0\\
  1 &-1 & 1 \\
 \end{pmatrix}
\phi\left(
\begin{pmatrix}
 1 & 0\\
 -1 & 0\\
 -1 & 1
\end{pmatrix}
\begin{pmatrix}
x_i\\ y_{i-1,2}
\end{pmatrix}
\right).
\end{align*}
Initialization with $y_{0,2}=0$ results in $y_{i,1}=x_i$
as well as $y_{i,2}=\max_{1\leq j\leq i}x_i$ for all $1\leq i\leq \#\TT$.
Then, we initialize a second basic RNN $B_2$ with $\bx\in \R^{2\times\#\TT}$, 
$x_{i,1}=y_{i,1}$ and $x_{i,2}=y_{\#\TT,2}$ for $i=1,\ldots,\#\TT$ (note that if 
we insist on the inizialization as discribed in Section~\ref{sec:rnn}, we need a 
third RNN to copy the value of $y_{\#\TT,2}$ to the entire vector). We filter 
the marked elements by
\begin{align*}
 y_i=B(x_i)=\max(x_{i,1}-(1-\theta)x_{i,2},0)
\end{align*}
and observe that $\MM=\set{T_i\in\TT}{y_i>0}$. Now, we know the structure 
necessary to represent the maximum strategy. 

\bigskip

To find $\widetilde{\rm MARK}$ by machine learning, we set up a simple optimization 
algorithm to compute the 
necessary weights. We initialize an RNN with the structure as given above with 
random weights, run Algorithm~\ref{alg:adaptive} with Step~(3) replaced by our 
RNN as long as $\#\TT_\ell\leq 2\cdot 10^4$, and apply simultaneous perturbation 
stochastic approximation (SPSA) to maximize the energy norm of the finest 
computed solution (note that due to Galerkin orthogonality, maximizing the 
energy norm is equivalent to minimizing the error). %Note that we additionally 
%applied the random shuffling of the input from Theorem~\ref{thm:refinement} both 
%for training and evaluation.

The SPSA approach is basically a stochastic gradient descent algorithm which 
replaces the gradient by a finite difference in a random direction (see, 
e.g.~\cite{spsa} for details). As discussed in the previous section, this is 
necessary since marking is not a continuous procedure. As discussed in 
Section~\ref{sec:conclude}, we limited the values of the recursive weights to 
the set $\{-1,0,1\}$ to avoid blow-up or dampening. The weights found by the 
algorithm for $B_1$ and $B_2$ are
\begin{align*}
 \begin{pmatrix}
  -0.2471& 0.1095 & -0.2358\\
  -0.1868 &0.3123 & -0.9564 \\
 \end{pmatrix}
,\quad
\begin{pmatrix}
 0.4394 & 0\\
 -0.6591 & 0\\
 -0.6466 & -1
\end{pmatrix}
,
\quad
\begin{pmatrix}
-0.1585 & 0.2804.
\end{pmatrix}
\end{align*}
While we cannot offer a meaningful explanation of the marking strategy found, we 
observe in Figure~\ref{fig:RNNmesh} and Figure~\ref{fig:RNNconv} that it behaves 
in an empirically optimal fashion and also the generated meshes look reasonable.

\begin{figure}
 \includegraphics[width=0.7\textwidth]{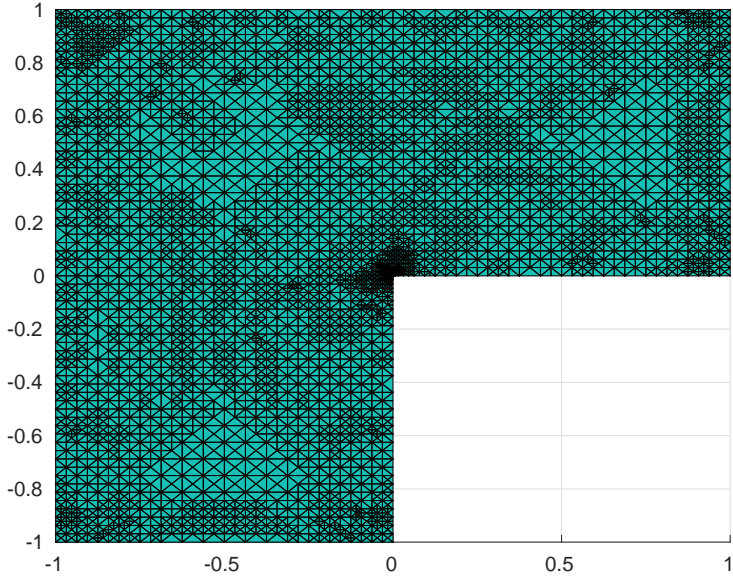}
 \caption{Adaptive mesh generated by the RNN $\widetilde{\rm MARK}$ found by 
stochastic gradient descent.}
 \label{fig:RNNmesh} 
\end{figure}

\bigskip

\begin{figure}
\psfrag{estimator}{estimator}
\psfrag{number of elements}{number of elements $\#\TT$}
 \includegraphics[width=0.7\textwidth]{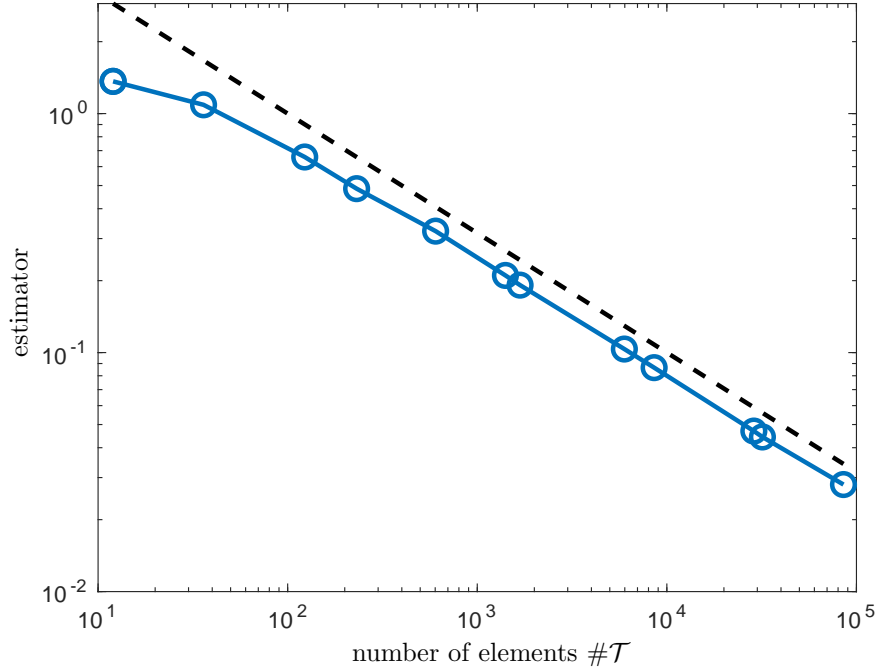}
 \caption{Performance of the RNN $\widetilde{\rm MARK}$ as a marking strategy in 
Algorithm~\ref{alg:adaptive}. The dashed line marks the optimal rate of 
convergence $\mathcal{O}(N^{-1/2})$.}
 \label{fig:RNNconv}  
\end{figure} 
\subsection{Training on the job}\label{sec:totj}
Finally, we try to learn the full deep RNN {\rm ADAPTIVE} for the Poisson problem
\begin{align*}
 -\Delta u &= f\quad\text{in }\Dom,\\
 u&=0\quad\text{on }\partial\Dom.
\end{align*}
We choose a $Z$-shaped domain $\Dom$ to increase the rate difference between optimal and uniform refinement, i.e., $\Dom:=[-1,1]^2\setminus {\rm conv}\{(0,0),(-1,0),(-1,-1/5)\}$.
There are many different plausible methods of how to train the network. We chose to set up a deep RNN which consists of two RNNs $B$ and $B'$ with the layer structure $(s_0,\ldots,s_3)=(16,10,10,10)$ and $(s_0',s_1',s_2') =(11,10,1)$. The two RNNs are only mildly recursive in the sense that $y_i=B(x_i,y_{i-1,1})$ (and analogously for $B'$) only has access to the first component of the previous vector valued output. The input data of $B$ is the sequence $x_1,\ldots,x_{\#\TT}$, where each $x_i$ contains $\nabla U_\TT|_{T}\in\R^2$ for $T=T_i$ as well as for each neighbor element $T$ which shares an edge with $T_i$. Moreover, $x_i$ contains the coordinates of the nodes of $T$ as well as the midpoint evaluation of the right-hand side $f$. This results in $x_i\in\R^{15}$. Since the RNN $B$ also processes the first component of the previous output, the first layer has to consist of $16$ nodes. For the same reason, the first layer of $B'$ has one more node than the final layer of $B$. We call this section \emph{training on the job} because the training of the neural network is part of the adaptive algorithm:
\begin{algorithm}\label{alg:totj}
	\textbf{Input: } Initial mesh $\TT_0$, number of training steps $n_{\rm train}\in\N$.\\
	For $\ell=0,1,2,\ldots$ do:
	\begin{enumerate}
		\item Compute discrete approximation $U_{\ell}$.
		\item For $k=1,\ldots,n_{\rm train}$
		\begin{enumerate}
		\item Apply $\by={\rm ADAPTIVE}(\bx)$.
		\item Use newest-vertex-bisection to refine $\#\TT_\ell/5$ elements
$T_i\in\TT_\ell$ with largest $y_i$ to obtain  $\widetilde \TT_{\ell+1}$.
\item Optimize weights of ${\rm ADAPTIVE}(\bx)$ with the goal to maximize $\norm{\nabla \widetilde U_{\ell+1}}{L^2(\Dom)}$.
\end{enumerate}
\item Apply $\by={\rm ADAPTIVE}(\bx)$.
\item Use newest-vertex-bisection to refine $\#\TT_\ell/5$ elements
$T_i\in\TT_\ell$ with largest $y_i$ to obtain  $\TT_{\ell+1}$.
	\end{enumerate}

\end{algorithm}
The result of this algorithm is shown in Figure~\ref{fig:nn}. 
\begin{remark}\label{rem:learning}
 We note that Algorithm~\ref{alg:totj} is different to Algorithm~\ref{alg:RNN} or~\ref{alg:RNN2} due to the fact that we always refine 20\% of the elements and, in case of Algorithm~\ref{alg:RNN2} we we do not exclude elements which are not refined in a specific step. This is largely to simplify the training process of the network and to minimize the implementational overhead. The optimization step (2c) consists of trying $n_{\rm train}=50$ random perturbations of the network and choosing the best one. To avoid to optimize towards networks which always refine all elements (this would give the largest increase in energy $\norm{\nabla \widetilde U_{\ell+1}}{L^2(\Dom)}$), we restrict ourselves to refining exactly 20 \% of the elements. We are confident that a more sophisticated training method would improve the results. This, however, is beyond the scope of this work.
\end{remark}

\begin{figure}
\psfrag{nnadaptive}{\tiny{\rm ADAPTIVE}}
\psfrag{adaptive}{\tiny uniform}
\psfrag{uniform}{\tiny adaptive}
\psfrag{error}{ error}
\psfrag{nelements}{number of elements $\#\TT$}
 \includegraphics[width=0.7\textwidth]{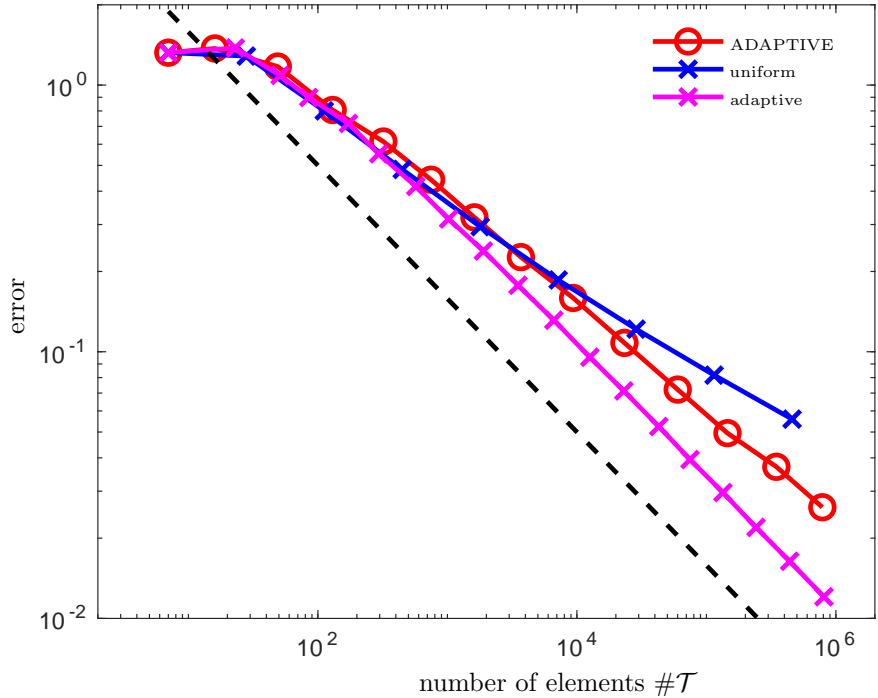}
 \caption{Result (in terms of the residual error estimator plotted over the number of elements) of the \emph{training on the job} Algorithm~\ref{alg:totj} compared with uniform refinement and (optimal) adaptive refinement via the residual error estimator. We observe that the deep RNN {\rm ADAPTIVE} (which is trained on the fly) clearly beats the uniform refinement and almost achieves optimal rate of convergence. Even with the very crude learning method described in Remark~\ref{rem:learning}, the deep RNN approach leads to an advantage over uniform refinement without using any information about the problem such as error estimators. This improved rate implies particularly that as long as the training cost is proportional to the cost of the solve step, this approach will eventually also be more cost effective than plain uniform refinement.}
 \label{fig:nn}
\end{figure}

\bibliographystyle{plain}
\bibliography{literature}
\end{document}